\newcommand{\eps}{{\varepsilon}}
\def\eps{\epsilon }
\newcommand\adots{\mathinner{\mkern2mu\raise1pt\hbox{.}
\mkern3mu\raise4pt\hbox{.}\mkern1mu\raise7pt\hbox{.}}}
\newtheorem{theo}{Theorem}[section]
\newtheorem{prop}[theo]{Proposition}
\newtheorem{cor}[theo]{Corollary}
\newtheorem{lem}[theo]{Lemma}
\newtheorem{defn}[theo]{Definition}
\newtheorem{ass}[theo]{Assumption}
\newtheorem{rem}[theo]{Remark}
\newtheorem{nota}[theo]{Notations}
\numberwithin{equation}{section}
\begin{document}

%\title{\textbf{Reflecting conormal pulses with nonlinear phases for  large variable-coefficient semilinear hyperbolic systems}}

\title{\textbf{Reflection of conormal pulse solutions  to  large variable-coefficient semilinear hyperbolic systems}}

\author{Mark Williams}

\author{ 
Mark {\sc Williams}\thanks{ Mathematics Department, University of North Carolina, CB 3250, Phillips Hall, 
Chapel Hill, NC 27599. USA. Email: {\tt williams@email.unc.edu}.}}

\maketitle
%\author{Mark Williams}

\begin{abstract}

We provide a rigorous justication of nonlinear geometric optics expansions for reflecting \emph{pulses}  in space dimensions $n>1$.  The pulses arise as solutions to  variable coefficient semilinear first-order hyperbolic systems.   The justification applies to $N\times N$  systems with $N$ interacting pulses which depend on phases that may be nonlinear.   The \emph{coherence} assumption made in a number of earlier works  is dropped.    We consider problems in which incoming pulses are generated from pulse boundary data as well as problems in which a single outgoing pulse reflects off a possibly curved boundary to produce a number of incoming pulses.    Although we focus here on boundary problems, it is clear that similar results hold by similar methods for the Cauchy problem for $N\times N$ systems in free space.

\begin{comment}
We provide a justification with rigorous error estimates showing that the leading term in
weakly nonlinear geometric optics expansions of highly oscillatory reflecting pulses is close to
the uniquely determined exact solution for small wavelengths ($\epsilon$).   Pulses reflecting off
 fixed noncharacteristic boundaries are considered under the assumption
that the underlying  boundary problem is uniformly spectrally stable in
the sense of Kreiss.  There are two respects in which these results make rigorous the formal treatment of pulses in Majda-Artola \cite{majdaartola} and Hunter-Majda-Rosales \cite{hmr}.  First, we give a rigorous construction of leading pulse profiles in problems where pulses traveling with many distinct group velocities are, unavoidably, present; and second, we provide a rigorous error analysis which yields a rate of convergence of approximate to exact solutions as $\eps\to 0$.  Unlike wavetrains, interacting pulses do not produce resonances that affect leading order profiles.  However, our error analysis  shows the importance of estimating pulse interactions in the construction and estimation of correctors.  Our results apply to a general class of systems that includes quasilinear problems like
the compressible Euler equations; moreover, the same methods yield a stability result for uniformly stable Euler shocks perturbed by highly oscillatory pulses.
\end{comment}

\end{abstract}

\tableofcontents

\section{Introduction and guide to the paper}

The goal of this paper is to provide a rigorous justication of nonlinear geometric optics expansions for reflecting \emph{pulses}   in space dimensions $n>1$.  The pulses arise as solutions to  variable coefficient semilinear first-order hyperbolic systems.   The justification applies to $N\times N$  systems with $N$ interacting pulses which depend on phases that may be nonlinear.      The \emph{coherence} assumption made in earlier works such as \cite{hmr,jmr,jmr2}  is dropped.    Although we focus here on boundary problems, it is clear that similar results hold by similar methods for the Cauchy problem for $N\times N$ systems in free space.  

To explain some of these terms and place the problem in context, we consider first an $N\times N$  semilinear hyperbolic system in free space $\mathbb{R}_t\times \mathbb{R}^n_x$:%\footnote{This problem will later serve as a local model to which more general problems can be reduced by changes of variables.} 
\begin{align}\label{i1}
\begin{split}
\; \mathcal{L}(t,x,\partial_t,\partial_x)u^\eps:=\partial_t u^\eps+\sum^n_{j=1}A_j(t,x)\partial_j u^\eps=f(t,x,u^\eps).
%&(b)\; B(t,x')u^\eps|_{x_n=0}=g(t,x',\theta_0)|_{\theta_0=\frac{t}{\eps}}:=G^\eps(t,x'),\\\;\; %(\text{or more generally}\theta_0=\frac{\psi_0(t,x'')}{\eps}, \text{see }\eqref{a3})\\
%&(c)\; u^\eps=0\text{ in }t<0.
\end{split}
\end{align}
%\begin{align}\label{i1}
%L(t,x,\partial_t,\partial_x)u^\eps:=\partial_t u^\eps+\sum^n_{j=1}A_j(t,x)\partial_j u^\eps=f(t,x,u^\eps)
%\end{align}
In nonlinear geometric optics one considers families of solutions that oscillate with wavelength proportional to $\eps>0$, and attempts to show that they have asymptotic expansions of the form\begin{align}\label{i2}
u^\eps(t,x)={U}_0(t,x,\theta)|_{\theta=\frac{\psi(t,x)}{\eps}}+o(1) \text{ as }\eps\to 0,
\end{align}
which are valid on a fixed space-time domain $U$ independent of $\eps$.  Here $o(1)$ refers to some appropriate norm;  in this paper we use the norm $L^\infty\cap N^m(U)$, where $N^m(U)$ is the conormal space defined in Definition \ref{43b}; see also Remark \ref{43f}.\footnote{We study here only the regime of ``weakly nonlinear geometric optics", where $u^\eps = \eps^p{U}_0(t,x,\frac{\psi}{\eps})+o(\eps^p)$, with  $p=0$  chosen so that ${U}_0$ satisfies a nonlinear equation, while for larger $p$ the equation would be linear.    
In a quasilinear problem, we would replace \eqref{i2} by 
$u^\eps(t,x)=u_0(t,x)+\eps {U}_0(t,x,\frac{\psi}{\eps})+o(\eps)$, with $u_0$ a given background state.}
 Here $\theta=(\theta_1,\dots,\theta_N)$, the smooth component functions $\psi_j$ of $\psi=(\psi_1,\dots,\psi_N)$ are called \emph{phases}, and the explicitly constructed function ${U}_0(t,x,\theta)$ is called the leading  \emph{profile}.   When ${U}_0$ is periodic in $\theta$ (or almost-periodic),  we refer to the wave $u^\eps$ as  a wavetrain; when ${U}_0$ decays to $0$ as $|\theta|\to \infty$, we refer to the wave as a pulse.   This includes the case where ${U}_0$ has compact support in $\theta$.

Starting in the early 1990s rigorous justifications of expansions like \eqref{i2} (and similar expansions in the quasilinear case) were provided in a series of papers by Joly, M\'etivier, and Rauch.   In \cite{jmr} the authors constructed a number of examples showing that in the case of wavetrains, without strong assumptions on the space of phases, the Cauchy problem is ill-posed; solutions $u^\eps$ may fail to exist on a domain independent of $\eps$ as $\eps\to 0$ because of \emph{focusing}.   The paper \cite{jmr} justified expansions like \eqref{i2} on a fixed domain independent of $\eps$ under the following \emph{coherence assumption} on the space of phases.\\

   If $\Omega\subset \mathbb{R}^{1+n}$ is open and $\Phi\subset C^\infty(\Omega,\mathbb{R})$ is a real vector space, then $\Phi$ is \emph{$\mathcal{L}-$coherent} when for all $\phi\in \Phi\setminus\{0\}$ one of the following two conditions holds:\\

(i) at every point $(t,x)\in\Omega$ both $\det \mathcal{L}(t,x,d\phi(t,x))=0$ and $d\phi(t,x)\neq 0$;

(ii) at every point $(t,x)\in\Omega$, $\det \mathcal{L}(t,x,d\phi(t,x))\neq 0$.\\

Obvious and important examples of $\mathcal{L}$-coherence occur when:

 (A) $\mathcal{L}$ is a variable coefficient operator and $\Phi$ is a \emph{one}-dimensional space of characteristic phases spanned by a single nondegenerate phase;\footnote{When $\phi$ satisfies $\det \mathcal{L}(t,x,d\phi(t,x))=0$ on $\Omega$, we call it a \emph{characteristic phase}.   When $\phi$ satisfies $d\phi(t,x) \neq 0$ for all $(t,x)\in\Omega$, we say it is \emph{nondegenerate}.}

 (B) $\Phi$ is a space of \emph{linear} phases $\phi(t,x)=\alpha\cdot (t,x)$, $\alpha\in\mathbb{R}^{1+n}$,  and the operator $\mathcal{L}$ in \eqref{i1} either has constant coefficients, or is replaced by a quasilinear operator where each $A_j$ is a function of $u$ alone and $u$ is taken close to a constant background state $u_0$.   

It is quite difficult to find other examples of spaces of coherent phases, but a few examples do exist for problems in free space; see section 3 of \cite{jmr}.   

The coherence condition, which requires that linear combinations of characteristic phases satisfy $\det \mathcal{L}(t,x,d\phi(t,x))=0$ either 
everywhere in $\Omega$ or nowhere in $\Omega$, is a natural condition for wavetrains, but not for pulses.   To see this let $a(t,x)$ $b(t,x)$, $g(\theta)$, and $h(\theta)$ be compactly supported smooth functions, let $\psi_1(t,x)$ and $\psi_2(t,x)$ be characteristic phases, and consider the wavetrain and pulse interactions given respectively by
\begin{align}\label{i3} 
\begin{split}
&(i)\; a(t,x)e^{i\frac{m\psi_1}{\eps}}\cdot b(t,x)e^{i\frac{n\psi_2}{\eps}}= a(t,x)b(t,x)e^{i\frac{m\psi_1+n\psi_2}{\eps}}\\
&(ii)\;  a(t,x)g\left(\frac{\psi_1}{\eps}\right)\cdot b(t,x)h\left(\frac{\psi_2}{\eps}\right)=a(t,x)b(t,x) \;g\left(\frac{\psi_1}{\eps}\right) h\left(\frac{\psi_2}{\eps}\right).
\end{split}
\end{align}
Even though the coherence condition appears to be less relevant in the case of pulses, most of the existing  rigorous  geometric optics results for pulses concern  problems 
where either (A) holds (e.g., \cite{ar2,altermanrauch} or (B)  holds (e.g., \cite{CW1,CW2,willig}).\footnote{See also \cite{cr} and its companion papers,  which give a rigorous treatment of focusing spherical pulses, a case where coherence does not hold.}

%for the semilinear problem $\square u^\eps+a |\partial_t u^\eps|^{p-1} \partial_tu^\eps=0.$  

        Without assuming coherence we now consider  the variable-coefficient  boundary problem that will be our focus in this paper, an $N\times N$ strictly hyperbolic system on the half-space $\mathbb{R}_t\times \mathbb{R}^n_+=\{(t,x):x_n\geq 0\}$:\footnote{We show in section \ref{geometric} that \eqref{ia1} is  local model to which more general continuation problems can be reduced by a change of variables; see also Remark \ref{geo}.  The   assumptions  on $(L,B)$, $f(t,x,u)$ and $g(t,x',\theta)$ are stated in Theorem \ref{mr}.} 
 \begin{align}\label{ia1}
\begin{split}
&(a)\; \mathcal{L}(t,x,\partial_t,\partial_x)u^\eps:=\partial_t u^\eps+\sum^n_{j=1}A_j(t,x)\partial_j u^\eps=f(t,x,u^\eps)\text{ in }x_n>0\\
&(b)\; B(t,x')u^\eps|_{x_n=0}=g(t,x',\theta_0)|_{\theta_0=\frac{t}{\eps}}:=b^\eps(t,x'),\\\;\; %(\text{or more generally}\theta_0=\frac{\psi_0(t,x'')}{\eps}, \text{see }\eqref{a3})\\
&(c)\; u^\eps=0\text{ in }t<0.
\end{split}
\end{align}

If we take \emph{any} $C^1$ function of the form $U_0(t,x,\frac{\psi}{\eps})$ and substitute it for $u^\eps$ in  \eqref{ia1}(a), terms  of order $\frac{1}{\eps}$ will appear on the left.  To make those terms vanish, the classical strategy is to choose  the leading profile of the form
\begin{align}\label{i4}
{U}_0(t,x,\theta)=\sum^N_{k=1}\sigma_k(t,x,\theta_k)r_k(t,x),
\end{align}
where each $r_k(t,x)\in\mathbb{R}^N$ is an  eigenvector associated to an eigenvalue $\lambda_k(t,x,\partial_{t,x'}\psi_k)$ of an appropriate operator depending on $\psi_k$, and $\psi_k$ solves an ``eikonal problem" of the form
\begin{align}\label{i4z}
\begin{split}
&\partial_{x_n}\psi_k=-\lambda_k(t,x,\partial_{t,x'}\psi_k)\\
&\psi_k|_{x_n=0}=t.
\end{split}
\end{align}
For $r_k$ and $\psi_k$ as constructed in section \ref{eikonal},  the terms of order $\frac{1}{\eps}$ vanish no matter how the scalar functions $\sigma_k$ in \eqref{i4} are chosen, and we obtain
\begin{align}\label{i4y}
\mathcal{L}(t,x,\partial_t,\partial_x) \;U_0\left(t,x,\frac{\psi}{\eps}\right)=f\left(t,x,U_0\left(t,x,\frac{\psi}{\eps}\right)\right)+R^\eps_a(t,x), \text{ where }R^\eps_a=O(1)\text{ as }\eps\to 0.
\end{align}
The remainder $R^\eps_a$ can be written as a sum of terms, some of which depend only on a single $\psi_k$ (the ``single-phase terms") and some of which depend on at least two distinct $\psi_k$ (the ``multiphase-terms'').  In order to prove \eqref{i2} the part of $R^\eps_a$ that is no smaller than $O(1)$ must be solved away.   The scalar profiles $\sigma_k(t,x,\theta_k)$ are chosen to satisfy ``transport equations" which effectively solve away the part of $R^\eps_a$ consisting of terms  of the form 
\begin{align}\label{i4x}
q(t,x,\sigma_k,\partial_{t,x}\sigma_k)|_{\theta_k=\frac{\psi_k}{\eps}} \cdot r_k(t,x),
\end{align}
 which depend on a single $\psi_k$ for some $k\in \{1,\dots,N\}$ \emph{and} are polarized in the direction of $r_k$.\footnote{Terms like  \eqref{i4x} make up $E\mathcal{F}|_{\theta=\frac{\psi}{\eps}}$ for $E\mathcal{F}$ as in \eqref{c3y}.   The $\sigma_k$ can be chosen so that $U_0\left(t,x,\frac{\psi}{\eps}\right)$ satisfies the boundary and initial conditions in \eqref{ia1} exactly. }  

The transport equations \eqref{c12} satisfied by the $\sigma_k(x,\theta_k)$ are derived in sections \ref{strategy}, \ref{e}, and \ref{U0}.  These equations are completely decoupled.  Provided \eqref{i2} holds, 
%The paper [CW1], which studied  pulses in $N\times N$ systems  that are coherent in the sense of (B),   
this shows that unlike wavetrains, pulses do not interact at leading order, and  ``resonances"  among phases have no effect on the leading profile $U_0$.\footnote{A resonance occurs when an integer combination of characteristic phases is again a characteristic phase.}    The decoupling reflects the fact that pulses interact on much smaller sets than wavetrains, sets that get smaller, in fact,  as $\eps$ decreases.   For example, in \eqref{i3} if we assume that $g$ and $h$ are supported in $|\theta|\leq 1$, we
have
\begin{align}\label{i5}
\begin{split}
&(i)\;\mathrm{supp}\; a(t,x)b(t,x)e^{i\frac{m\psi_1+n\psi_2}{\eps}}=\mathrm{supp}\;a(t,x)b(t,x),\text{ but }\\
&(ii)\;\mathrm{supp}\;a(t,x)b(t,x)\; g\left(\frac{\psi_1}{\eps}\right) h\left(\frac{\psi_2}{\eps}\right)\subset \mathrm{supp}\;a(t,x)b(t,x)\cap \{(t,x):|\psi_1|\leq \eps\text{ and }|\psi_2|\leq \eps\}.
\end{split}
\end{align}

To solve away the remaining parts of $R^\eps_a$ that are no smaller than $O(1)$, we construct a \emph{corrector} $U^\eps_1(t,x)$ such that the approximate solution
\begin{align}\label{i6}
u^\eps_a(t,x):=U_0(t,x,\frac{\psi}{\eps})+\eps U^\eps_1(t,x)
\end{align}
gives a remainder that is $o(1)$ rather than $O(1)$:%\footnote{For the moment we ignore boundary and initial conditions, and do not specify norms.}
\begin{align}\label{i5b}
\mathcal{L}(t,x,\partial_t,\partial_x)u^\eps_a=f(t,x,u^\eps_a)+r^\eps_a(t,x), \text{ where }r^\eps_a=o(1)\text{ as }\eps\to 0.
\end{align}
In the single phase problem considered in \cite{ar2}, where $\theta$ and $\psi$ are  scalar and the leading profile $U_0(t,x,\theta)$ is compactly supported in $\theta$, a corrector of the form $U^\eps_1(t,x)=V(t,x,\theta)|_{\theta=\frac{\psi}{\eps}}$ was constructed, where $V$ is  bounded but no longer compactly supported in $\theta$.   Considering that pulses do not interact at leading order, it is natural in the multiphase case, where $\theta=(\theta_1,\dots,\theta_N)$, to  look for a corrector of the form
\begin{align}\label{i7}
\sum^N_{k=1}V_k(t,x,\theta_k)|_{\theta_k=\frac{\psi_k}{\eps}}:=V(t,x,\theta)|_{\theta=\frac{\psi}{\eps}},
\end{align}
where the individual $V_k$ are constructed like the corrector $V_k$ in \cite{ar2}.   Such a corrector, constructed in section \ref{V},  is useful for solving away the single-phase part of $R^\eps_a$ that is left after all polarized terms of the form  \ref{i4x} are removed.\footnote{This is the single-phase part of $(I-E)\mathcal{F}|_{\theta=\frac{\psi}{\eps}}$ in \eqref{c3y}.} But a corrector of the form \eqref{i7} is useless for solving away the multiphase or interacting part of $R^\eps_a$ which may contain,  for example,  $O(1)$ products like \eqref{i3}(ii), with factors depending on two or more linearly independent phases $\psi_k$.    Thus, we look for a corrector of the form
\begin{align}\label{i8}
U^\eps_1(t,x)=V(t,x,\theta)|_{\theta=\frac{\psi}{\eps}}+\mathcal{W}^\eps(t,x),
\end{align}
where the  noninteraction term $V$  solves away the unpolarized, single-phase part of $R^\eps_a$, and the interaction term $\mathcal{W}^\eps$  solves away the multiphase or interaction part" of $R^\eps_a$.   In fact, the multiphase part of $R^\eps_a$ is not solved away completely, but there remains only a harmless piece, $r^\eps_a(t,x)$ in \eqref{i5b}, of size $O(\sqrt{\eps})=o(1)$ in $L^\infty\cap N^m$; see Proposition \ref{39a}.  A formula for $r^\eps_a$ is given just above \eqref{a9z}.

%But that does not work,  or rather, \eqref{i7}  is only part of the answer.  The role of the corrector is, roughly,  to ``solve away"   $O(1)$ error terms given by nonlinear interactions like the one  illustrated in \eqref{i2}(ii).  When $\psi_1=\psi_2$, one can solve away  \eqref{i2}(ii) with the help of a corrector like \eqref{i6}.  But when $d\psi_1$ and $d\psi_2$ are linearly independent, a corrector like \eqref{i7} is of no help.      

Thus,  even though pulses do not interact at leading order,  their interactions determine the part of the corrector given by $\mathcal{W}^\eps(t,x)$.   The main novelties of this paper are contained in section \ref{multicorrector}, which constructs $\mathcal{W}^\eps(t,x)$, and in sections \ref{H2}, \ref{L20}, and \ref{E1}, which give various conormal estimates involving $\mathcal{W}^\eps(t,x)$.    The construction in section \ref{multicorrector} draws some inspiration from \cite{CW1}, which considered the corresponding quasilinear multiphase reflection problem with \emph{linear} phases (case (B) of $\mathcal{L}-$coherence).  But the construction and estimation of $\mathcal{W}^\eps$ is more challenging here because the operator $\mathcal{L}(t,x,\partial_t,\partial_x)$ in \eqref{ia1}  has variable coefficients,  and the characteristic phases $\psi_k$ obtained by solving the eikonal problems \eqref{i4z} are now nonlinear.  %, and the eigenvectors $r_k(y)$ are not constant vectors as in [CW1]. 
The construction of $\mathcal{W}^\eps$ takes advantage of the fact that the phases $\psi_k(t,x)$ are all nearly equal to $t$ in the 
``interaction region" $I_\eps:=\{(t,x):|t|\lesssim \eps, |x_n|\lesssim \eps\}$; see Remark \ref{explain}.

We give the construction of the approximate solution $u^\eps_a$ first for the case where the nonlinear function  $f(t,x,u)$ in 
\eqref{ia1} is at most \emph{quadratic} in $u$.  This allows us to see the pulse interactions explicitly.  In section \ref{genf} we explain how the construction and estimation of $u^\eps_a$  can be extended to the case where $f(t,x,u)$ in \eqref{ia1} is a general smooth function such that $f(t,x,0)=0$.    

The exact solution $u^\eps\in L^\infty\cap N^m(U)$ to the boundary problem \eqref{ia1} is obtained in section \ref{bpexact}  on a fixed open set $U\ni 0$ independent of $\eps$ by direct application of a result of \cite{metajm}.

\begin{rem}[Conormal spaces and progressing waves]\label{43f}
\textup{The paper \cite{metajm} and its companion \cite{metduke} were concerned with a problem seemingly quite different from the one considered here.  
Building on earlier work of Bony \cite{bony,bony2} and Beals-M\'etivier \cite{bm} which described  the propagation, interaction, and reflection of smooth enough conormal progressing wave solutions to $N\times N$ semilinear hyperbolic systems,  M\'etivier in the companion papers  \cite{metduke, metajm} was able to extend such results to the case of  \emph{discontinuous} progressing waves.\footnote{Rauch-Reed \cite{rr} had earlier studied discontinuous progressing waves for $2\times 2$ semilinear systems.} } 

\textup{We now describe the spaces $N^m(U)$ introduced in \cite{metajm}, where $U\ni 0$ is any  small enough open set in $\mathbb{R}^{1+n}_+$.\footnote{These spaces along with their natural norms are discussed in more detail in section \ref{Nm}.} Let $\Sigma_j=\{\psi_j=0\}$ be one of the $N$ characteristic surfaces defined by $\psi_j$ as in \eqref{i4z}, and let $\mathcal{M}_j$ denote the space of smooth vector fields on $U\subset\mathbb{R}^{1+n}$ tangent to both $\Sigma_j$ and $\Delta:=\Sigma_j\cap \{x_n=0\}$.   Define  
$N^m(U,\mathcal{M}_j)$ as the set of $u\in L^2(U)$ such that $M^Iu\in L^2(U)$, where $M^I$ denotes any composition of $\leq m$ elements of $\mathcal{M}_j$.   Let $N^m(U)$ denote the set of $u\in L^2(U)$ such that $u=\sum^N_{j=1} u_j$ for some $u_j\in N^m(U,\mathcal{M}_j)$.     Under certain conditions involving $U$ and $\Delta$,  the spaces $L^\infty\cap N^m(U)$ are algebras for $m>\frac{n+5}{2}$; see Proposition \ref{35a}.}

\textup{Clearly, elements of $L^\infty\cap N^m(U,\mathcal{M}_j)$
or $L^\infty\cap N^m(U)$ may be discontinuous across $\Sigma_j$ even when $m$ is large.
The exact and approximate solutions we consider here are in $C^\infty(U)$, but they depend on a small parameter $\eps$.  Instead of being discontinuous across $\Sigma_j$, they exhibit a ``fast transition region" near $\Sigma_j$ that gets thinner as $\eps\to 0$. }  
\end{rem}

If $h^\eps$ is a function that depends on the small parameter $\eps>0$ and $p\geq 0$, we 
write
\begin{align}\label{43g}
|h^\eps|_{L^\infty\cap N^m(U)}\lesssim \eps^p
\end{align}
if there exist  positive constants $\eps_0$ and $C$ such that
\begin{align}
|h^\eps|_{L^\infty\cap N^m(U)}\leq C\eps^p \text{ for all }\eps\in (0,\eps_0].
\end{align}
The final step is to estimate  the error $u^\eps-u^\eps_a:=w^\eps$.  We do this by applying the $L^\infty\cap N^m(U)$  estimates of \cite{metajm} to the boundary problem \eqref{error2} satisfied by $w^\eps$.   This yields
\begin{align}\label{i9}
|w^\eps|_{L^\infty\cap N^m(U)} \lesssim \sqrt{\eps}, \text{ where }m>\frac{n+5}{2}.
\end{align}
Since $|\eps U^\eps_1(t,x)|_{L^\infty\cap N^m(U)}\lesssim \eps$ (see Proposition \ref{32a}), this completes the proof of \eqref{i2} for the boundary  problem \eqref{ia1} and provides the rate of convergence $\sqrt{\eps}$.   This rate is faster than the rate found in \cite{CW1} by a quite different error analysis.\footnote{The error analysis in \cite{CW1} did not use conormal estimates.  In the quasilinear problem considered there,  we constructed the exact solution as $u^\eps(t,x)=u_0+\eps\mathcal{U}^\eps(t,x,\theta_0)|_{\theta_0=\frac{\beta\cdot (t,x')}{\eps}}$, and studied the ``singular system" satisfied by $\mathcal{U}^\eps(t,x,\theta_0)$.   The error analysis was based on estimates for that singular system proved by using the pulse calculus of \cite{CGW2} to construct singular Kreiss symmetrizers.  This kind of analysis breaks down in the present setting.} 

These arguments yield our first main result, which we state  here for the model  problem \eqref{ia1}.\footnote{The assumptions in Theorem \ref{mr}  are explained in section \ref{assumptions}.}

\begin{theo}[Pulse generation at the boundary]\label{mr}
Consider  the boundary problem \eqref{ia1} on the half-space $\mathbb{R}_t\times \mathbb{R}^n_+$, where $\mathcal{L}(t,x,\partial_t,\partial_x)$  has real $C^\infty$ $N\times N$ matrix coefficients and is strictly hyperbolic with respect to $t$, the boundary is noncharacteristic, and $B(t,x')$ is a real $C^\infty$ matrix of size $p\times N$ for some $p\leq N$.   Suppose $f(t,x,u)$ and $g(t,x,\theta_0)$ are $C^\infty$ functions valued in $\mathbb{R}^N$ and $\mathbb{R}^p$ respectively, where $f(t,x,0)=0$ and $g(t,x',\theta_0)$ is supported in $\{(t,x,\theta_0):t\geq 0\text{ and }|\theta_0|\leq 1.\}$.   Suppose also that $(\mathcal{L},B)$ satisfies the uniform Lopatinski condition at $(t,x)=0$.\footnote{This condition determines $p\leq N$.  The number $p$ is equal to the number of positive eigenvalues of the matrix $A_n$ in \eqref{ia1}; see Remark \ref{incoming}.}   

Then the exact solution $u^\eps$ of \eqref{ia1} has the expansion
\eqref{i2}.   More precisely,   if $U_0(t,x,\theta)$ as in \eqref{i4} denotes the leading profile whose construction is outlined above, and $\psi=(\psi_1,\dots,\psi_N)$ 
for $\psi_k$ satisfying \eqref{i4z}, then for $m>\frac{n+5}{2}$ there exist $\eps_0>0$ and an open set $U\ni 0$  independent of $\eps$ such that
\begin{align}\label{mra}
\left|u^\eps-U_0(t,x,\theta)|_{\theta=\frac{\psi}{\eps}}\right|_{L^\infty\cap N^m(U)}\lesssim \sqrt{\eps}\text{ for }\eps\in (0,\eps_0].
\end{align}
In the expansion \eqref{i4} of $U_0$, only the \emph{incoming} profiles can be nonzero and, except for special choices of $g(t,x',\theta_0)$, they are all nonzero.\footnote{A careful definition of incoming/outgoing profiles is given in Remark \ref{incoming}.  Roughly, a profile $\sigma_k(t,x,\theta_k)$ is incoming when its support propagates along curves that enter the domain $x_n\geq 0$ as $t$ increases, and is outgoing in the opposite case.   When $\sigma_k$ is incoming, we call the associated phase $\psi_k$ and surface $\Sigma_k=\{\psi_k=0\}$ incoming as well.}  

\end{theo}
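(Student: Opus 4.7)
The plan is to build an approximate solution $u^\eps_a = U_0(t,x,\psi/\eps) + \eps U^\eps_1$ whose defect $r^\eps_a$ defined by \eqref{i5b} has size $O(\sqrt{\eps})$ in $L^\infty \cap N^m(U)$, and then apply the existence and stability results of \cite{metajm} for semilinear hyperbolic boundary problems in the conormal spaces $L^\infty \cap N^m(U)$. The exact solution $u^\eps$ is produced by \cite{metajm} on a small fixed $U \ni 0$ independent of $\eps$. The error $w^\eps := u^\eps - u^\eps_a$ satisfies a boundary problem with zero initial and boundary data and forcing $f(t,x,u^\eps) - f(t,x,u^\eps_a) + r^\eps_a$; applying the linear conormal stability estimate of \cite{metajm} (after Moser-type bounds, using that $L^\infty \cap N^m(U)$ is an algebra for $m > (n+5)/2$ per Proposition \ref{35a}) gives $|w^\eps|_{L^\infty \cap N^m(U)} \lesssim \sqrt{\eps}$. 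Combined with $|\eps U^\eps_1|_{L^\infty \cap N^m(U)} \lesssim \eps$ from Proposition \ref{32a}, this yields \eqref{mra}.

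First I solve the $N$ eikonal problems \eqref{i4z} locally by Hamilton--Jacobi theory. Strict hyperbolicity provides $N$ distinct real eigenvalues $\lambda_k$; the noncharacteristic boundary ensures that these first-order equations are solvable by the method of characteristics in a neighborhood of $(t,x)=0$. Plugging $U_0(t,x,\psi/\eps)$ of the form \eqref{i4} into \eqref{ia1}(a) annihilates the $O(\eps^{-1})$ terms automatically. The remaining $O(1)$ residual splits into a polarized single-phase part, an unpolarized single-phase part, and a multiphase interaction part. The scalar profiles $\sigma_k$ are chosen so as to satisfy the decoupled transport equations derived in sections \ref{strategy}, \ref{e}, \ref{U0}, which exactly remove the polarized single-phase part. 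Initial data come from $u^\eps \equiv 0$ for $t<0$; boundary data come from expanding $g(t,x',\theta_0)$ along the characteristic modes selected by the uniform Lopatinski condition, which also pins down which $\sigma_k$ are incoming (and hence possibly nonzero) versus outgoing (and hence forced to vanish by Remark \ref{incoming}).

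Next the corrector is decomposed as $U^\eps_1 = V(t,x,\theta)|_{\theta=\psi/\eps} + \mathcal{W}^\eps(t,x)$, with $V = \sum_k V_k(t,x,\theta_k)$ built mode by mode in the spirit of \cite{ar2} to annihilate the unpolarized single-phase residual; each $V_k$ is bounded but no longer compactly supported in $\theta_k$. The hard part, and the main obstacle I expect to encounter, is the interaction corrector $\mathcal{W}^\eps$ that must solve away the multiphase residual. The key geometric observation is that since each $\psi_k$ equals $t$ on $\{x_n=0\}$ and is smooth, the simultaneous support of two or more factors $\sigma_j(t,x,\psi_j/\eps)$ associated to distinct incoming modes is confined to the interaction region $I_\eps = \{|t| \lesssim \eps,\ |x_n| \lesssim \eps\}$ of measure $O(\eps^{n+1})$. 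Drawing inspiration from \cite{CW1} (which handled the quasilinear case with linear phases), one constructs $\mathcal{W}^\eps$ as the solution of a linear hyperbolic boundary problem whose forcing is the multiphase part of $R^\eps_a$, localized to $I_\eps$. Carrying this out when the $\psi_k$ are nonlinear, and then obtaining the requisite conormal bounds in sections \ref{multicorrector}, \ref{H2}, \ref{L20}, \ref{E1}, is where the bulk of the technical work lies: the small measure of $I_\eps$ must be converted into a genuine gain in $L^\infty \cap N^m$ so that $\mathcal{W}^\eps$ contributes only $O(1)$ to $U^\eps_1$ (hence $O(\eps)$ to $u^\eps_a$) while leaving the residual $r^\eps_a = O(\sqrt{\eps})$ of Proposition \ref{39a}. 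Once this is in place, the steps above assemble into the estimate \eqref{mra} and complete the theorem.
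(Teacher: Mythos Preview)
Your overall architecture matches the paper: construct $u^\eps_a = U_0(\cdot,\psi/\eps) + \eps U^\eps_1$ with $U^\eps_1 = V|_{\theta=\psi/\eps} + \mathcal{W}^\eps$, obtain $u^\eps$ from \cite{metajm}, and close via the conormal estimate \eqref{37bb} applied to the error problem. Two points deserve correction.

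First, your description of $\mathcal{W}^\eps$ as ``the solution of a linear hyperbolic boundary problem whose forcing is the multiphase part of $R^\eps_a$'' is not what the paper does, and taken literally it would not produce the needed cancellation at order $\eps^0$. The paper works in \emph{fast} variables $(\theta_0,\xi_n)$: it replaces $H_2(y,\theta)|_{\theta=\psi/\eps}$ by $\mathcal{H}_2(y,\theta_0,\xi_n)|_{\theta_0=y_0/\eps,\,\xi_n=y_n/\eps}$ via the linearization $\psi_k \approx y_0 + \omega_k(y')y_n$ (valid on the interaction region), freezes the coefficient of $\partial_{\theta_0}$ at $y_n=0$ so that the operator $\mathcal{L}_{2,0}=\partial_{\xi_n}+\mathcal{A}(y',0,d'\psi_0)\partial_{\theta_0}$ is diagonalizable by the $r_k(y',0)$, and then integrates explicitly along fast characteristics (formula \eqref{w}). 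The resulting $W$ is then multiplied by a cutoff $\chi^\eps(y_0,y_n)$ supported at scale $\sqrt{\eps}$ (not $\eps$). It is precisely this $\sqrt{\eps}$ cutoff, together with the error $(\mathcal{L}_2-\mathcal{L}_{2,0})W^\eps$ from freezing, that produces the $\sqrt{\eps}$ rate in Propositions \ref{L20a} and \ref{31b}; the small measure of $I_\eps$ alone does not suffice for the $L^\infty$ part of the norm.

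Second, the error problem \eqref{error2} does \emph{not} have zero boundary data: $B w^\eps|_{y_n=0} = -\eps B U^\eps_1(y',0)$, which is $O(\eps)$ in $L^\infty\cap N^m(b\Omega_T)$ by Propositions \ref{32h} and \ref{36a}. This term must be carried through the final estimate \eqref{37f}.
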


\begin{rem}\label{geo}

\textup{\textbf{More general domains $\mathcal{D}$   and initial surfaces $S$.}\;\;Theorem \ref{mr}  is local, so because we consider variable coefficient operators,  it applies equally well to the following more general class of  semilinear  boundary problems on $\mathbb{R}^{n+1}_z$.   Let 
\begin{align}
\mathcal{P}(z,\partial_z)=\sum^n_{j=0}P_j(z)\partial_{z_j},
\end{align}
where the $P_j$ are real $C^\infty$ $N\times N$ matrices.  Suppose we are given a smooth hypersurface 
$S=\{z:\alpha(z)=0\}$ that is spacelike at $z=0$, and a smooth noncharacteristic hypersurface $b\mathcal{D}=\{z:\beta=0\}$  that is noncharacteristic at $0$ and intersects $S$ transversally at $0$.\footnote{The surface $S=\{\alpha=0\}$ is  \emph{spacelike} at $0$ if $\mathcal{P}(0,\partial_z)$ is strictly hyperbolic in the direction $d\alpha(0)\neq 0$.  The hypersurface $b\mathcal{D}$ is noncharacteristic at $0$ if $\det(P(0,d\beta(0)))\neq 0$.}    Set $\mathcal{D}=\{z:\beta\geq 0\}$ and for a small enough open set $\mathcal{O}\ni 0$ let  $\Delta:=S\cap b\mathcal{D}\cap \mathcal{O}$, a smooth codimension-two manifold closed in $\mathcal{O}$. 
 %involving operators $(\mathcal{L},B)$ on a space-time domain $\mathcal{D}\subset \mathbb{R}^{1+n}$  with smooth noncharacteristic boundary $b\mathcal{D}=\{(t,x):\beta(t,x)=0\}$.   Let $S=\{(t,x):\alpha(t,x)=0\}$ be a smooth spacelike hypersurface transverse to $b\mathcal{D}$ at $0\in b\mathcal{D}\cap S:=\Delta$.\footnote{The surface $S=\{\alpha=0\}$ is said to be spacelike at $(t_0,x_0)$ if $\mathcal{L}$ is strictly hyperbolic in the direction $d\alpha(t_0,x_0)\neq 0$.}    For a small enough neighborhood $U\ni 0$ in $\mathbb{R}^{1+n}$,  
Let $B(z)$ be a real $C^\infty$ $p\times N$ matrix, and consider the following pulse generation problem on $\mathcal{O}\cap \mathcal{D}$:
\begin{align}\label{pcp}
\begin{split}
&\mathcal{P}(z,\partial_z)u^\eps=f(z,u^\eps)\text{ in }\beta>0\\
&B(z)u^\eps|_{\beta=0}=g\left(z,\frac{\psi_0(z)}{\eps}\right)\text{ on }\beta=0\\
&u^\eps=0 \text{ in }\alpha<0.
\end{split}
\end{align}
for $\psi_0$ as in (b) below. 
%any space-time domain $\mathcal{D}\subset \mathbb{R}^{1+n}_{t,x}$ with a smooth and possibly  \emph{curved} boundary $b\mathcal{D}=\{(t,x):\beta(t,x)=0\}$ that is noncharacteristic for the strictly hyperbolic operator $\mathcal{L}(t,x,\partial_t,\partial_x)$.  The choice of phase $\frac{t}{\eps}$  in the boundary data of \eqref{ia1}
We show in section \ref{geometric} that \eqref{pcp}  can be reduced by a change of variables to a model problem  \eqref{ia1} satisfying the hypotheses of Theorem \ref{mr} %always be arranged by a change of variables 
%when considering pulses   
%but there is actually not much loss of generality there.  In section \ref{geometric} we describe a more general geometric situation to which Theorem \ref{mr} applies.  
%Roughly, the theorem applies, after a change of variables,  to pulses 
%concentrated near any collection of  
whenever the following conditions are satisfied:}  %Here all intersections are assumed to be \emph{transversal}:}

%$N$ smooth, closed characteristic surfaces $\Sigma_j\subset \mathcal{D}$ that satisfy the following conditions near $(t,x)=0\in b\mathcal{D}$.   } 

\textup{(a) $(\mathcal{P},B)$ satisfies the uniform Lopatinski condition at $0\in b\mathcal{D}\cap S$ when $\alpha$ is taken as a time variable; see Remark \ref{ulcoord}.}

\textup{(b) The map $\psi_0:b\mathcal{D}\to \mathbb{R}$ is $C^\infty$ and satisfies
$\Delta=\{z\in b\mathcal{D}:\psi_0(z)=0\}$ and $d\psi_0(0)\neq 0$.}

%\textup{(b) There are $N$ smooth, closed characteristic surfaces $\Sigma_j\subset \mathcal{D}$ such that 
%for $j,k\in \{1,\dots,N\}$ we have $\Sigma_j\cap \Sigma_k=\Delta$ when $j\neq k$, where $\Delta\ni 0$ is a smooth closed manifold of codimension two.}

%\textup{(c) For all $j$ we have $\Sigma_j\cap b\mathcal{D}=\Delta$.}

%\textup{(d)  $\Delta\subset S$ and  $\Sigma_j\cap S=\Delta$ for all $j$.   Moreover: $\psi_0:b\mathcal{D}\to \mathbb{R}$ satisfies
%$\Delta=\{(t,x)\in b\mathcal{D}:\psi_0(t,x)=0\}$ and $d\psi_0(0)\neq 0$.\footnote{A similar geometric situation was considered in [AJM] for progressing waves discontinuous across characteristic hypersurfaces $\Sigma_j$.} }
\textup{(c) The functions  $f(z,u)$ and $g(z,\theta_0)$ are $C^\infty$ functions valued in $\mathbb{R}^N$ and $\mathbb{R}^p$ respectively, where $f(z,0)=0$ and $g(z,\theta_0)$ is supported in $\{(z,\theta_0):\alpha\geq 0\text{ and }|\theta_0|\leq 1.\}$.}

\end{rem}

%Next we state a closely related reflection result for the problem
 %\begin{align}\label{mrb}
%\begin{split}
%&\; \mathcal{L}(t,x,\partial_t,\partial_x)u^\eps=f(t,x,u^\eps)\text{ in }x_n>0\\
%&(b)\; B(t,x')u^\eps|_{x_n=0}=0\\\;\; %(\text{or more generally}\theta_0=\frac{\psi_0(t,x'')}{\eps}, \text{see }\eqref{a3})\\
%&(c)\; u^\eps=u^\eps_o\text{ in }t\leq -\eps,
%\end{split}
%\end{align}
%where $u^\eps_o$ is an outgoing pulse concentrated on a single characteristic surface $\psi_N=0$.   The function $f(t,x,u)$, the operators $\mathcal{L}$, $B$, and the phases $\psi_k$ are as in Theorem \ref{mra}.

Next we state a closely related reflection result for the following problem on $\mathcal{O}\cap \mathcal{D}$:
%Let $B(z)$ be a real $C^\infty$ $p\times N$ matrix, and consider the following pulse generation problem on $U\cap \mathcal{D}$:
\begin{align}\label{pcp3}
\begin{split}
&\mathcal{P}(z,\partial_z)u^\eps=f(z,u^\eps)\text{ in }\beta>0\\
&B(z)u^\eps|_{\beta=0}=0 \text{ on }\beta=0\\
&u^\eps=u^\eps_o \text{ in }\alpha< -\gamma \text{ for some }\gamma>0,
\end{split}
\end{align}
where $(\mathcal{P},B)$, $f$,  $\alpha$, and $\beta$ are as in Remark \ref{geo}, and $u^\eps_0$ is a given outgoing pulse concentrated near a characteristic surface $\Sigma=\{z:\zeta=0\}$. We assume that $\Sigma\ni 0$ intersects $b\mathcal{D}=\{\beta=0\}\ni 0$ transversally starting at ``time" $\alpha=0$, where $\alpha(0)=0$.\footnote{We do \emph{not} assume that $\Sigma\cap \{\beta=0\}=\Sigma\cap \{\alpha=0\}$ or that $\{\alpha=0\}\cap\{\beta=0\}=\Sigma\cap\{\beta=0\}$ near $0$.} Let $\mathcal{M}_\Sigma$ denote the set of smooth vector fields on $\mathcal{O}$ that are tangent to $\Sigma$.  

\begin{theo}[Reflection of pulses]\label{reflection}
%Let $\Omega$ be an open neighborhood of $0$ in $\mathbb{R}^{1+n}_+$. 
For $m>\frac{n+5}{2}$ consider the problem \eqref{pcp3}, where the outgoing pulse is assumed to satisfy $\mathcal{P}(z,\partial_z)u^\eps_o=f(z,u^\eps_0)$ on $\mathcal{O}\ni 0$ and to
have an expansion
\begin{align}\label{out}
u^\eps_o(z)=\tau\left(z,\frac{\zeta}{\eps}\right)r(z)+w^\eps_o(z)\text{ with }|w^\eps_o|_{L^\infty\cap N^m(\mathcal{O},\mathcal{M}_\Sigma)}=o(1)\text{ as }\eps\to 0
\end{align}
for some smooth scalar outgoing profile $\tau(z,\theta_{out})$ supported in $|\theta_{out}|\leq 1$.  We assume that $\tau$ satisfies a  transport equation, namely \eqref{r4}, on $\mathcal{O}$.  To insure corner compatibility we assume that both  $u^\eps_0|_{\alpha<-\gamma}$ and $\tau|_{\alpha<-\gamma}$ vanish on a neighborhood of $\{\beta=0\}$.  We also assume that $u^\eps_0$ is chosen so that the solution to \eqref{pcp3} exists on some neighborhood of $0$ independent of $\eps$.\footnote{  This can be arranged by an application of Theorem 2.1.5 of \cite{metajm}.  The point is to insure that the solution exists long enough for reflection to occur.   The outgoing pulse \eqref{out} can be constructed as in \cite{ar2}.}   

Then  there exists an open set $U\ni 0$  such that the exact solution $u^\eps$ of \eqref{pcp3} satisfies
\begin{align}\label{expansion}
\left|u^\eps(z)-\left[\sum^p_{k=1}\sigma_k\left(z,\frac{\psi_k}{\eps}\right)r_k(z)+\tau\left(z,\frac{\zeta}{\eps}\right)r(z)\right]\right|_{L^\infty\cap N^m(U)}= o(1)\text{ as }\eps \to 0, 
\end{align}
where the $\psi_k$  and $\sigma_k$  are smooth \emph{incoming} characteristic phases and scalar profiles constructed as in section \ref{application}.    If as in the construction of \cite{ar2} we have $|w^\eps_o|_{L^\infty\cap N^m(\mathcal{O},\mathcal{M}_\Sigma)}\lesssim \eps$ in \eqref{out}, then we obtain the rate of convergence $\sqrt{\eps}$ in \eqref{expansion}.
%\begin{align}
%|w^\eps|_{L^\infty\cap N^m(\Omega_T)}\lesssim \sqrt{\eps}\text{ for }\eps\in (0,\eps_0]
%\end{align}
%for some $\eps_0>0$.

Let $\tilde \Delta$ be the closed codimension-two submanifold of $U$ given by $\Sigma\cap b\mathcal{D}$, and set 
$\Sigma_N:=\Sigma=\{\zeta=0\}$ and $\Sigma_k=\{\psi_k=0\}$ for $k=1,\dots,p$.  Then we have on $U$:
\begin{align}\label{inters}
\Sigma_i\cap\Sigma_j=\tilde \Delta \text{ for }i\neq j\text{ and } \Sigma_i\cap\{\beta=0\}=\tilde\Delta\text{ for all }i,
\end{align}
where all intersections are transversal.

\end{theo}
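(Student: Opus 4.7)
My plan is first to use the change of variables from Remark \ref{geo} and section \ref{geometric} to reduce \eqref{pcp3} to a model problem on $\{\alpha\geq 0\}\cap\{x_n\geq 0\}$ in which $\alpha$ plays the role of time and $\{x_n=0\}$ is the boundary; after this reduction $u^\eps_o$ solves the interior equation on a neighborhood of $0$ and $\Sigma=\{\zeta=0\}$ meets $\{x_n=0\}$ transversally precisely along $\tilde\Delta$. I then intend to construct an approximate solution of the form
\[
u^\eps_a(z) \;=\; u^\eps_o(z) \;+\; U_0\!\left(z,\tfrac{\psi(z)}{\eps}\right) \;+\; \eps\, V\!\left(z,\tfrac{\psi(z)}{\eps}\right) \;+\; \eps\,\mathcal{W}^\eps(z),
\]
where $U_0 = \sum_{k=1}^p \sigma_k(z,\theta_k)\,r_k(z)$ collects the $p$ reflected incoming modes, and $V$ and $\mathcal{W}^\eps$ are the single-phase and multiphase correctors of sections \ref{V} and \ref{multicorrector}, adapted to the present situation; then I will estimate $w^\eps := u^\eps - u^\eps_a$ by the $L^\infty\cap N^m$ argument used at the end of the proof of Theorem \ref{mr}.

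The incoming phases $\psi_k$ are built as solutions of the eikonal problem \eqref{i4z} but with the \emph{inhomogeneous} boundary datum $\psi_k|_{x_n=0} = \zeta|_{x_n=0}$, the outgoing phase $\zeta$ itself arising from an eikonal equation with the same trace on $\{x_n=0\}$ and the outgoing normal direction. This choice forces $\Sigma_k\cap\{x_n=0\} = \{\zeta|_{x_n=0}=0\} = \tilde\Delta$ for every $k$ and also gives $\Sigma\cap\{x_n=0\}=\tilde\Delta$. The scalar profiles $\sigma_k$ are obtained from the transport equations \eqref{c12} of section \ref{U0}, with boundary traces $\sigma_k|_{x_n=0}(z',\theta)$ fixed by imposing $Bu^\eps_a|_{x_n=0}=0$ at leading order. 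Substituting \eqref{out} produces the $p\times p$ linear system
\[
\sum_{k=1}^p \sigma_k|_{x_n=0}(z',\theta)\,B(z',0)\,r_k(z',0) \;=\; -\tau(z',\theta)\,B(z',0)\,r(z',0),
\]
which is uniquely solvable by the uniform Lopatinski condition; the corner-compatibility hypothesis on $\tau$ and $u^\eps_o$ then forces these traces to vanish for $\alpha<-\gamma$ near $\tilde\Delta$, so that transporting them into the interior along the characteristic rays produces $\sigma_k$ supported only in the post-reflection region. The correctors $V$ and $\mathcal{W}^\eps$ are then constructed mutatis mutandis as in sections \ref{V} and \ref{multicorrector}, with the additional feature that $\mathcal{W}^\eps$ must now absorb cross-terms in $R^\eps_a$ of the form $f_u(z,0)\,u^\eps_o\cdot U_0|_{\theta=\psi/\eps}$ and $f_{uu}(z,0)\,u^\eps_o\cdot U_0\cdot U_0$, which couple the outgoing phase $\zeta$ to the incoming phases $\psi_k$ simultaneously near $\tilde\Delta$.

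With $u^\eps_a$ in hand, $w^\eps=u^\eps-u^\eps_a$ satisfies a semilinear boundary problem with interior residual $r^\eps_a=O(\sqrt{\eps})$ in $L^\infty\cap N^m$ (as in Proposition \ref{39a}), boundary residual equal to $-Bw^\eps_o|_{x_n=0}$ (of size $o(1)$ in general and $O(\eps)$ under the stronger hypothesis on the outgoing expansion), and zero initial data in $\alpha<-\gamma$ up to negligible corrector contributions that can if necessary be killed by an $\alpha$-cutoff. Applying the $L^\infty\cap N^m$ estimates of \cite{metajm} as in the step leading to \eqref{i9} in the proof of Theorem \ref{mr} then yields $|w^\eps|_{L^\infty\cap N^m(U)}=o(1)$, and $\lesssim\sqrt{\eps}$ under the stronger hypothesis, which is \eqref{expansion}. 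For the geometric claim \eqref{inters}, $\Sigma_i\cap\{x_n=0\}=\tilde\Delta$ is automatic from $\psi_i|_{x_n=0}=\zeta|_{x_n=0}$; for $i\neq j$ strict hyperbolicity delivers distinct characteristic speeds $\lambda_i\neq \lambda_j$ at points of $\tilde\Delta$, so that $\partial_{x_n}\psi_i=-\lambda_i$ and $\partial_{x_n}\psi_j=-\lambda_j$ differ while the tangential derivatives coincide, which gives $d(\psi_i-\psi_j)\neq 0$ on $\tilde\Delta$ and, by the implicit function theorem, $\Sigma_i\cap\Sigma_j=\tilde\Delta$ locally with transversal intersection.

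The hardest step will be the construction and estimation of $\mathcal{W}^\eps$ in this reflection setting: up to $p+1$ distinct pulse phases (the outgoing $\zeta$ together with the $p$ incoming $\psi_k$) can be simultaneously small in the interaction region near $\tilde\Delta$, producing cross-terms in $R^\eps_a$ of a type absent from the pure pulse-generation problem of Theorem \ref{mr}. Verifying that the machinery of section \ref{multicorrector} still absorbs these multiphase products while preserving the $O(\sqrt{\eps})$ residual bound in $L^\infty\cap N^m$ is the technical heart of the argument.
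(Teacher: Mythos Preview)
Your plan is broadly correct and would eventually succeed, but it misses the paper's central simplification and thereby misidentifies where the real work lies.

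The paper first changes the time coordinate so that $\zeta(y',0)=y_0$; after this change $\zeta$ solves the eikonal problem \eqref{a4} for the index $N$, i.e.\ $\zeta=\psi_N$. Thus the outgoing phase is not an extra $(p{+}1)$-st phase but one of the $N$ canonical phases already present in the framework of Theorem~\ref{mr}. The paper then takes $U_0=\sum_{k=1}^N\sigma_k r_k$ with $\sigma_N:=\tau$ (the transport equation hypothesis on $\tau$ is exactly \eqref{c10} for $k=N$, and the remaining outgoing $\sigma_{p+1},\dots,\sigma_{N-1}$ vanish as before). Consequently \emph{all} pulse interactions---incoming--incoming and incoming--outgoing alike---are products $\sigma_m\sigma_p$ with $m,p\in\{1,\dots,p,N\}$, $m\neq p$, of precisely the type already absorbed by the $W^\eps$ of section~\ref{multicorrector}. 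The correctors $V$ and $W^\eps$ are literally those of sections~\ref{V}--\ref{multicorrector}; what you flag as ``the technical heart of the argument'' evaporates.

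Your ansatz $u^\eps_a=u^\eps_o+U_0+\eps V+\eps\mathcal{W}^\eps$ also differs from the paper's in that you carry the \emph{exact} outgoing solution rather than just its leading profile $\tau r_N$. This pushes the remainder $w^\eps_o$ into the \emph{boundary} residual: you get $Bw^\eps|_{y_n=0}=-Bw^\eps_o|_{y_n=0}+O(\eps)$, and you then need a trace estimate for $w^\eps_o$ on $\{y_n=0\}$ in the boundary conormal norm $N^m(b\Omega)$, which does not follow directly from the hypothesis $|w^\eps_o|_{L^\infty\cap N^m(\mathcal{O},\mathcal{M}_\Sigma)}=o(1)$. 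The paper avoids this: by putting only $\tau r_N+\eps V_N$ into $u^\eps_a$, the boundary residual is $-\eps B U^\eps_1(y',0)=O(\eps)$ exactly as in Theorem~\ref{mr}, and $w^\eps_o$ enters only through the \emph{initial} datum $w^\eps|_{y_0<-\gamma}=w^\eps_o-\eps V_N$. The only genuinely new step in the error analysis is accommodating this nonzero past datum in the estimate of Proposition~\ref{37b}.
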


It is not clear to us how to derive Theorem \ref{reflection} as a strict corollary of Theorem \ref{mr}.  Instead, we show in section \ref{application} that after some geometric preparation,  Theorem \ref{reflection} can be obtained as  a corollary of the proof of Theorem \ref{mr}.

\begin{rem}\label{extend}
\textup{\textbf{Some extensions.}}

%\textup{ \textbf{(1) Reflection. }Theorem \ref{mr} can be recast in a form where a given outgoing pulse (that is, one that leaves $x_n\geq 0$ as $t$ increases), constructed by the method of [AR1] for example, reflects off the boundary at $t=0$ giving rise to $p$ distinct incoming pulses that propagate into the interior as $t$ increases.  This requires a discussion  similar to that of section 3.2 of [CGW] to justify the reduction of the continuation problem with nonzero data in $t<0$ to a problem like \eqref{ia1} with zero data in $t<0$; we omit that discussion here.    A similar result holds for $N-p$ outgoing pulses which reflect  to produce $p$ incoming pulses. }

\textup{\textbf{(1) The Cauchy problem.}  With slight modification the methods of this paper yield a rigorous construction of pulses with nonlinear phases for the Cauchy problem for variable-coefficient strictly hyperbolic $N\times N$ systems in dimensions $n>1$.  Pulse initial data at $t=0$ of the form $u^\eps|_{t=0}=g(x,\frac{\psi_0(x)}{\eps})$ gives rise to pulses in $t>0$ concentrated on the $N$ characteristic hypersurfaces emanating from $\Delta=\{(t,x):t=\psi_0(x)=0\}$.   Conormal estimates of \cite{metduke} can be used for the error analysis.}

\textup{\textbf{(2) Quasilinear or nonstrictly hyperbolic problems.} \;\;%The construction of approximate solutions for quasilinear hyperbolic problems in the weakly nonlinear regime only requires the consideration of quadratic interactions, so 
We expect that  the construction and estimation of the approximate solution $u^\eps_a$ to the semilinear problem considered here extends in a straightforward manner to variable-coefficient, \emph{quasilinear} hyperbolic systems satisfying our other assumptions.   But to carry out a similar error analysis, one would need to perform the clearly nontrivial task of extending the results of \cite{metajm} from the semilinear to the quasilinear case.     Modifying  the strict hyperbolicity assumption
made here would require a corresponding modification of \cite{metajm}, which assumes strict hyperbolicity. }  %When [AJM] was written, Kreiss symmetrizers were only available for strictly hyperbolic boundary problems, but they are now available for symmetric hyperbolic systems with characteristics of constant multiplicity [Met] and even for some problems with variable multiplicities [MZ].  
%We do not pursue that here.} 

\end{rem}

%where $\mathcal{W}^\eps(t,x)$ is chosen to solve away the rather complicated $O(1)$ error terms left untouched by $V(t,x,\theta)$.  

\section{Assumptions and some notation} \label{assumptions}

%\subsection{Exact solution}\label{bpexact}

%We will denote time and space variables by  $(t,x)=(t,x'',x_n)\in\mathbb{R}_t\times \mathbb{R}^n_x$.  Consider the first-order $N\times N$ strictly hyperbolic (careful definition later) boundary  problem for the $\mathbb{R}^N$-valued unknown $u^\eps(t,x)$ in the half of spacetime given by $x_n>0$:
%\begin{align}\label{a1}
%\begin{split}
%&(a)\; \mathcal{L}u^\eps:=\partial_t u^\eps+\sum^n_{j=1}A_j(t,x)\partial_j u^\eps=f(t,x,u^\eps)\text{ in }x_n>0\\
%&(b)\; B(t,x'')u^\eps|_{x_n=0}=g(t,x'',\theta_0)|_{\theta_0=\frac{t}{\eps}}:=G^\eps(t,x''),\\\;\; %(\text{or more generally}\theta_0=\frac{\psi_0(t,x'')}{\eps}, \text{see }\eqref{a3})\\
%&(c)\; u^\eps=0\text{ in }t<0.
%\end{split}
%\end{align}
%Here the given function $g(t,x'',\theta_0)\in C^\infty$ has compact $(t,x'')-$support in $t\geq 0$ and some sufficient polynomial decay in $\theta_0$.   The given function $f$ is a $C^\infty$ function of its arguments valued in $\mathbb{R}^N$ and we suppose $f(t,x,0)=0$.     The matrix coefficients $A_j(t,x)$ are $N\times N$ with $C^\infty$ entries.    The matrix $B(t,x'')$ is a $p\times N$ matrix with $C^\infty$ entries, where $p<N$ must be chosen correctly in order for the problem to be well-posed (more on that later).   
%The parameter $\eps$ lies in $(0,1]$.  

We now list with some discussion the structural assumptions made in Theorem \ref{mr} on the operators 
\begin{align}\label{sh0}
 \mathcal{L}(t,x,\partial_t,\partial_x)=\partial_t +\sum^n_{j=1}A_j(t,x)\partial_j\text{ and }B(t,x')
\end{align}
appearing in \eqref{ia1}.

\begin{ass}[Strict hyperbolicity]\label{sh}
Let $(\tau,\xi)\in \mathbb{R}_\tau\times\mathbb{R}^n_\xi$ denote variables dual to $(t,x)$.   The operator $\mathcal{L}(t,x,\partial_t,\partial_x)$  is \emph{strictly hyperbolic with respect to $t$} on an open neighborhood $\mathcal{O}\subset \mathbb{R}^{1+n}$ of $(t,x)=0$.   That is,   there exist  functions
$\tau_i(t,x,\xi): C^\infty(\mathcal{O}\times (\mathbb{R}^n_\xi\setminus 0))\to \mathbb{R}$, $i=1,\dots, N$, positively homogeneous of degree one in $\xi$,  such that 
\begin{align}\label{sh1}
\begin{split}
&\tau_1<\tau_2<\dots< \tau_N\text{ on }\mathcal{O}\times (\mathbb{R}^n_\xi\setminus 0)\\
&\det\left(\tau I+\sum^n_{j=1}A_j(t,x)\xi_j\right)=\prod^N_{i=1}(\tau+\tau_i(t,x,\xi)).
\end{split}
\end{align}
Thus, for each $(t,x,\xi)\in \mathcal{O}\times (\mathbb{R}^n_\xi\setminus 0)$, the matrix $\sum^n_{j=1}A_j(t,x)\xi_j$ has $N$ distinct \emph{real} eigenvalues $\tau_i(t,x,\xi)$.\footnote{If $\mathcal{L}(t,x,\partial_t,\partial_x)=A_0(t,x)\partial_t+\sum^n_{j=1}A_j(t,x)\partial_{x_j}$, we say $\mathcal{L}$ is strictly hyperbolic with respect to $t$ if $A_0$ is invertible and the condition \eqref{sh1}  is satisfied by $A_0^{-1}\mathcal{L}$.}
\end{ass}

%The problem \eqref{a1} is a  type of nonlinear problem (nonlinear because of $f(t,x,u^\eps)$) called a \emph{semilinear} problem.  If the matrices $A_{j}(t,x)$ were instead of the form $A_j(t,x,u^\eps)$ the problem would be \emph{quasilinear}.    In that case the matrix coefficients and $f$ depend on the unknown $u^\eps$.

%When $g(t,x'',\theta_0)$ has compact support in $\theta_0$ or decays (for example, like $\langle\theta_0\rangle^{-M}$ for some big $M>0$ as $|\theta_0|\to \infty$),  we refer to the boundary data $g(t,x'',\theta_0)|_{\theta_0=\frac{t}{\eps}}=G^\eps(t,x'')$ as ``pulselike boundary data" or simply as a \emph{pulse}.     When $g$ is periodic in  $\theta_0$ we call $G^\eps$ \emph{wavetrain} boundary data.     The corresponding solution $u^\eps$ is then called a pulse or a wavetrain.   The parameter $\eps$ corresponds roughly to the  wavelength of the pulse or wavetrain.   We will study pulses here.% and they will need to be  of short wavelength, or equivalently, of high frequency in order for the methods of  geometric optics to work.   
   % Based on experience with PDEs of this form, one expects the solution to \eqref{
%\begin{ass}\label{cptsupport}
%We assume that $g(t,x'',\theta_0)$ has compact support in $\theta_0$ as well as in $(t,x'')$, say 
%\begin{align}\label{gsupp}
%\mathrm{supp }\;g\subset \{(t,x'',\theta_0):|t,x''|\leq 1, |\theta_0|\leq 1, t\geq 0\}. 
%\end{align}
 %We expect to be able to handle the case when $g$ has  polynomial decay in $\theta_0$  in a similar manner. 
%\end{ass} 

\begin{ass}[Noncharacteristic boundary]\label{nonch}
%The boundary $\{x_n=0\}$ is noncharacteristic for $\mathcal{L}$.  
The matrix $A_n(t,x)$  is invertible 
on $\mathcal{O}$. 
\end{ass}

\begin{ass}[Uniform Lopatinski condition]\label{ul}
The pair of operators $(\mathcal{L},B)$ satisfies the uniform Lopatinski (a.k.a. uniform Kreiss) condition at $(t,x)=0$.
\end{ass}

We refer to \cite{kreiss}, to Definition 3.8, Chapter 7 of \cite{CP}, or to Remark 9.7 of \cite{bs} for the exact statement of this condition, which guarantees that the $L^2$ norm of the solution $u$ of  the linear  boundary problem on the half-space $\mathbb{R}\times \mathbb{R}^n_+$ $$\mathcal{L}(0,\partial_t,\partial_x)u=f \text{ in }x_n>0,\; B(0)u=g\text{ on }x_n=0,\; u=0 \text{ in }t<0$$
can be estimated, along with $\langle u|_{x_n=0}\rangle_{L^2}$, in terms of the $L^2$ norms of $f$ and $g$.\footnote{Here take $f$ and $g$ to be in $L^2$ on their respective domains and equal to zero in $t<0$.}  This condition is assumed in a few of the results of \cite{metajm} that we apply in this paper.   We also need to use it at one point in the construction of the leading profile $U_0$; assumption \ref{ul}  implies that the $p\times p$ boundary matrix $\begin{pmatrix}Br_1&\cdots&Br_p\end{pmatrix}$ in \eqref{c11} is invertible for $y$ near $0$.

\begin{rem}\label{ulcoord}
\textup{ The uniform Lopatinski condition at $(t,x)=0$ is a coordinate invariant, ``open"  condition that depends, in the notation of Remark \ref{geo},  only on $(\mathcal{L},B)$, a noncharacteristic hypersurface $b\mathcal{D}$, and a spacelike hypersurface $S$ that intersects $b\mathcal{D}$ transversally at $(t,x)=0$; see section 8.1 of Chapter 7 of \cite{CP}.\footnote{When the uniform Lopatinski condition holds at $(t,x)=0$, it holds in an open neighborhood of $0$.}  We use this coordinate invariance in section \ref{geometric} to reduce the problem \eqref{pcp} to our model problem.}

%Thus, there is no loss of generality in taking $A_0=I$.}

\end{rem}

In most of the paper instead of the $(t,x)$ notation for points in $\mathbb{R}^{1+n}$, we will use the following notation, which is more convenient for boundary problems.  Set 
\begin{align}\label{ynota}
\begin{split}
&y=(y_0,y'',y_n), \text{ where }t=y_0,\; x'=y'', \text{ and }x_n=y_n\\
&y'=(y_0,y'').  
\end{split}
\end{align}
Denote the corresponding dual variables by $\eta=(\eta_0,\eta'',\eta_n)=(\eta',\eta_n)$.%\footnote{Note that $x=(y'',y_n)$ and $x'=y''$, where $x'$ is as in \eqref{ia1}.}

%\begin{rem}[More about assumptions and notation] 

%(a) %The condition of \emph{strict hyperbolicity} is a linear algebraic condition on the operator $\mathcal{L}$ in \eqref{a1}.  
 % The  Euler equations in two space dimensions (2D Euler)  for a compressible fluid (such as a gas) are a classic example of a strictly hyperbolic system.

%(b) The boundary $x_n=0$ in \eqref{a1} is assumed to be \emph{noncharacteristic}; this \emph{means} $A_n$ is invertible on its domain. 
We now rewrite \eqref{ia1} after applying $A_n^{-1}$ on the left and redefining $A_n^{-1}f$ to be $f$ as 
\begin{align}\label{a1f}
\begin{split}
&L(y,\partial_{y})u^\eps=\partial_nu^\eps+\sum^{n-1}_{j=0}B_j(y)\partial_ju^\eps=f(y,u^\eps)\text{ in }y_n>0,\;\;\text{ where }B_0=A_n^{-1}\text{ and }\partial_0=\partial_t\\
&B(y')u^\eps|_{y_n=0}=g(y',\theta_0)|_{\theta_0=\frac{y_0}{\eps}}:=b^\eps(y')\\
&u^\eps=0\text{ in }y_0<0.
\end{split}
\end{align}

We end this section by collecting   some notation.

\begin{nota}

1.  Let $U\ni 0$ always denote some bounded connected open set in the half-space $$\mathbb{R}^{1+n}_+=\mathbb{R}_{y_0}\times \mathbb{R}^n_+=\{y:y_n\geq 0\}.$$   
Let $bU=\{y':(y',0)\in U\cap \{y_n=0\}\}$.
The set $U$ may need to be taken sufficiently small (independently of $\eps$!) for its use  in a given context to make sense.

2. If $f^\eps:U\to \mathbb{R}^M$ and $p\geq 0$, we write 
$|f^\eps(y)|\lesssim \eps^p$ if there exist positive constants $\eps_0<1$ and $C$ such that $|f^\eps(y)|\leq C\eps^p$ for all $y\in U$ and $\eps\in (0,\eps_0]$.
Similarly, if $\|\cdot \|$ is some norm, we write $\|f^\eps\|\lesssim \eps^p$ if there exist positive constants $\eps_0<1$ and $C$such that $\|f^\eps\|\leq C\eps^p$ for all $\eps\in (0,\eps_0]$.

3.  Denote by $C^\infty_b({U},\mathbb{R}^M)$ the set of $C^\infty$ functions on $U$ and valued in $\mathbb{R}^M$, which are bounded along with all their derivatives on $U$. 

4.  The phases $\psi_k$ constructed in section \ref{eikonal} satisfy $\psi_k(y',0)=y_0$ for all $k$.   We sometimes (for example, in \eqref{a9w}) write $\psi_0(y')$ in place of $y_0$ as a reminder of this.

5.  We often write $d'=\partial_{y'}$.

6. Set $\mathbb{N}_0=\{0,1,2,3,\dots\}$.

\end{nota}

\section{Construction of the approximate solution $u^\eps_a$}

In this section we construct $u^\eps_a=U_0(y,\theta)|_{\theta=\frac{\psi}{\eps}}+\eps U^\eps_1(y)$ such that \eqref{i5b} holds.
The leading term is constructed as a sum of $N$ pieces%\footnote{These pieces are constructed starting in section \ref{strategy} below.}
 \begin{align}\label{a7x}
U_0\left(y,\frac{\psi}{\eps}\right) = \sum^N_{k=1}\sigma_k\left(y,\frac{\psi_k(y)}{\eps}\right)r_k(y), \;k=1,\dots,N,
 \end{align}
 where each piece is a pulse concentrated near the codimension-one surface in spacetime defined by $\Sigma_k=\{y:\psi_k(y)=0\}$.   The first step is to construct  the characteristic phases $\psi_k$ that define these $N$ surfaces.  The $N$ surfaces all contain the codimension-two surface defined by $\Delta:=\{y:y_0=y_n=0\}$.  The construction takes place on a small enough neighborhood of  the origin $y=0$.

\subsection{Eikonal equations for the phases}\label{eikonal}

 The functions $\psi_k$ are constructed as solutions of a nonlinear Cauchy problem classically known as an \emph{eikonal} problem.     %\footnote{In PDE a \emph{Cauchy problem} is an initial value problem where initial data is given on some appropriate codimension-one surface.}   
 To formulate this problem we first define the eigenvectors $r_k(y)$ that appear in \eqref{a7x} along with the associated eigenvalues.  %\footnote{We also clarify what is the operator whose eigenvalues and eigenvectors we are talking about here!}
 %In what follows it will be more convenient to set $x=y=(y_0,y'',y_n)=(y',y_n)$.    Let $\eta=(\eta_0,\eta'',\eta_n)=(\eta',\eta_n)$.  
With $L$ as in \eqref{a1f} we set
\begin{align}\label{a2}
%L(y,\partial_y)=\partial_n+\sum^{n-1}_{j=0}B_j(y)\partial_j,\text{ and set }
L(y,\eta)=\eta_n I+\sum^{n-1}_{j=0}B_j(y)\eta_j:=\eta_n I+\mathcal{A}(y,\eta').
\end{align}
We claim that if $\eta'$ satisfies $|\eta''|\leq \delta |\eta_0|$ for $\delta>0$ small enough, then the matrix $\mathcal{A}(y,\eta')$ has $N$ distinct \emph{real} eigenvalues $\lambda_m(y,\eta')$, $m=1,\dots,N$; see Remark \ref{hyp}.  Let $L_m(y,\eta')$, $R_m(y,\eta')$
denote associated  left and right eigenvectors, which can be chosen positively homogeneous of degree zero in $\eta'$ and such that $L_k(y,\eta')R_j(y,\eta')=\delta_{kj}$. \footnote{We take the $L_m$ to be row vectors, and the $R_m$ to be column vectors, so for example, $L_m\mathcal{A}=\lambda_mL_m$ at $(y,\eta')$.}  

%For $\eta'$ in a conic neighborhood of  $(1,0,\dots,0)$ let $\lambda_m(y,\eta')$, $m=1,\dots,N$ be the distinct, real eigenvalues of $\mathcal{A}(y,\eta')$, and denote associated  left and right eigenvectors by $L_m(y,\eta')$, $R_m(y,\eta')$; see Remark \ref{hyp}.   These vectors are positively homogeneous of degree zero in $\eta'$. %\footnote{The existence of distinct real $\lambda_m$  here is an \emph{extra} assumption (beyond strict hyperbolicity of $L$) about $(y',\eta')$ to be clarified later: $(y',\eta')\in \mathcal{H}$, the hyperbolic region.}
%We choose the $L_m$ so that $L_kR_j=\delta_{kj}$.  
\begin{rem}\label{hyp}
\textup{It is not generally true that $\mathcal{A}(y,\eta')$ has $N$ distinct real eigenvalues for \emph{all} $\eta'\neq 0$.  
This is true, though, for the particular  directions $\eta'=(\pm1,0,\dots,0)$.   Indeed, note that 
$$\mathcal{A}(y,\pm 1,0,\dots,0)=\pm B_0(y)=\pm A_n^{-1}(y),$$
and observe that $A_n(y)$ has $N$ distinct real eigenvalues as a consequence of  strict hyperbolicity, that is,  Assumption \ref{sh}.  Moreover,  since $y_n=0$ is noncharacteristic, those eigenvalues must all be nonzero. It follows that $\mathcal{A}(y,\eta')$ has $N$ distinct real nonzero eigenvalues whenever $\eta'=(\eta_0,\eta'')$ satisfies
\begin{align}\label{gam}
\eta\in \Gamma:=\{\eta':|\eta''|\leq \delta |\eta_0|\} 
\end{align}
for $\delta>0$ small enough.}%\footnote{We say that $\Gamma$ is a ``conic neighborhood" of $(\pm 1, 0,\dots,0)$. This means 
%$t\Gamma\subset \Gamma$ for all $t>0$.}
%This condition defines a ``conic neighborhood" $Gamma$ of $(\pm 1,0,\dots,0)$.  

\end{rem}

   We define the following projection operators for use in section \ref{e}:

\begin{defn}\label{pi}
(a) We define $\Pi_k(y,\eta')$ to be the projection on $\mathrm{span}\;R_k(y,\eta')$ in the decomposition 
$$\mathbb{R}^N=\oplus^N_{m=1}\mathrm{span}\;R_m(y,\eta').$$
Explicitly, for $x\in\mathbb{R}^N$ we have $\Pi_k(y,\eta')x=(L_k(y,\eta')x)R_k(y,\eta')$.

(b)  With $d'=\partial_{y'}$ set $r_k(y):=R_k(y,d'\psi_k(y))$,  $l_k(y):=L_k(y,d'\psi_k(y))$, and $\pi_k(y)=\Pi_k(y,d'\psi_k(y))$. 
\end{defn}

The next (classical) lemma \cite{lax} and its corollary will be used in section \ref{U0}  to determine ``transport equations" satisfied by the leading profiles.

\begin{lem}\label{a2a}
For $j=0,\dots,n-1$ we have
\begin{align}\label{a2b}
L_m(y,\eta')B_j(y)R_m(y,\eta')=\partial_{\eta_j}\lambda_m(y,\eta').
\end{align} 
\end{lem}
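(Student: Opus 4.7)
The plan is to use the standard differentiation-of-eigenvalue-equation trick. Start from the right-eigenvector relation $\mathcal{A}(y,\eta')R_m(y,\eta')=\lambda_m(y,\eta')R_m(y,\eta')$, which holds on the conic region $\Gamma$ where the $N$ eigenvalues are distinct (so $\lambda_m$ and $R_m$ depend smoothly on $\eta'$).

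Differentiating with respect to $\eta_j$ and using $\partial_{\eta_j}\mathcal{A}(y,\eta')=B_j(y)$ (since $\mathcal{A}$ is linear in $\eta'$) gives
\begin{equation*}
B_j(y)R_m+\mathcal{A}\,\partial_{\eta_j}R_m = (\partial_{\eta_j}\lambda_m)R_m+\lambda_m\,\partial_{\eta_j}R_m.
\end{equation*}
Multiplying on the left by $L_m$ and using the left-eigenvector relation $L_m\mathcal{A}=\lambda_m L_m$, the terms $\lambda_m L_m\partial_{\eta_j}R_m$ cancel on the two sides, leaving
\begin{equation*}
L_mB_jR_m = (\partial_{\eta_j}\lambda_m)\,L_mR_m.
\end{equation*}
The normalization $L_mR_m=1$ then yields the desired identity.

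There is essentially no obstacle here: the only thing to check is that $\lambda_m$ and $R_m$ are indeed differentiable in $\eta_j$, which follows from the simplicity of the eigenvalues on $\Gamma$ (Remark \ref{hyp}) and the smoothness of $\mathcal{A}$ in $\eta'$. The identity is pointwise in $(y,\eta')\in \mathcal{O}\times \Gamma$, so no global considerations enter. This is the classical formula of Lax, and the argument above is its standard one-line derivation.
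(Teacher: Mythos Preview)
Your proof is correct and is essentially identical to the paper's own argument: differentiate the eigenvector equation $\mathcal{A}(y,\eta')R_m=\lambda_m R_m$ with respect to $\eta_j$, use $\partial_{\eta_j}\mathcal{A}=B_j$, multiply on the left by $L_m$, and invoke $L_m\mathcal{A}=\lambda_m L_m$ together with the normalization $L_mR_m=1$. The paper phrases the starting point as $L(y,\eta',-\lambda_m)R_m=0$, but the computation is the same.
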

\begin{proof}
Differentiate the equation 
$$0=\left[-\lambda_m(y,\eta')I+\sum^{n-1}_{j=0}B_j(y)\eta_j)\right]R_m(y,\eta')=L(y,\eta',-\lambda_m)R_m$$
with respect to $\eta_j$ to obtain
$$\left[-\partial_{\eta_j}\lambda_m(y,\eta')I+B_j(y)\eta_j)\right]R_m(y,\eta')+L(y,\eta',-\lambda_m) \partial_{\eta_j}R_m(y,\eta')=0.$$
Apply $L_m(y,\eta')$ on the left to obtain \eqref{a2b}.
\end{proof}

The following immediate corollary is used in section \ref{U0}.
\begin{cor}\label{a2c}
We have $l_m(y)B_j(y)r_m(y)=\partial_{\eta_j}\lambda_m(y,d'\psi_m).$
\end{cor}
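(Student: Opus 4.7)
The plan is to obtain this corollary by directly specializing Lemma \ref{a2a} at the covector $\eta' = d'\psi_m(y)$. Since Definition \ref{pi}(b) defines $r_m(y) := R_m(y,d'\psi_m(y))$ and $l_m(y) := L_m(y,d'\psi_m(y))$, the identity
\begin{align*}
L_m(y,\eta')B_j(y)R_m(y,\eta')=\partial_{\eta_j}\lambda_m(y,\eta')
\end{align*}
from Lemma \ref{a2a}, valid for $\eta'\in\Gamma$ (or more generally wherever $\lambda_m$ is a smooth simple eigenvalue of $\mathcal{A}(y,\eta')$), immediately yields the claim once we substitute $\eta' = d'\psi_m(y)$ on both sides.

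The only thing to check is that this substitution is legitimate, i.e.\ that $d'\psi_m(y)$ lies in the region $\Gamma$ of \eqref{gam} where $L_m, R_m, \lambda_m$ are well-defined smooth functions of $\eta'$. The phases $\psi_m$ are constructed in section \ref{eikonal} as solutions of the eikonal problem \eqref{i4z} with boundary condition $\psi_m|_{y_n=0}=y_0$, so on the boundary $d'\psi_m = (1,0,\ldots,0)$, which lies in the interior of $\Gamma$. By continuity, on a sufficiently small neighborhood $\mathcal{O}\ni 0$ (the domain on which the eikonal problem is solved) $d'\psi_m(y)$ remains in $\Gamma$, so the smooth eigen-data $(\lambda_m,L_m,R_m)$ may be evaluated at $\eta'=d'\psi_m(y)$. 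With this justification the corollary is nothing more than a substitution.

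There is no real obstacle here; the statement is flagged as an ``immediate corollary'' precisely because the work is all in Lemma \ref{a2a}. The only mild subtlety, worth mentioning for completeness, is the domain-of-definition issue just noted — one wants to be sure the reader understands that $d'\psi_m(y)\in\Gamma$ on the neighborhood under consideration so that the left and right eigenvectors used in the substitution are the intended ones. Once that is in place, the proof is a single line.
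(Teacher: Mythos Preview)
Your proof is correct and matches the paper's approach exactly: the corollary is obtained by specializing Lemma \ref{a2a} at $\eta' = d'\psi_m(y)$ using Definition \ref{pi}(b), and the paper simply labels it an ``immediate corollary'' without further argument. Your additional remark that $d'\psi_m(y)\in\Gamma$ near $y=0$ is a helpful sanity check but is implicit in the paper's setup.
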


We want to construct characteristic phases  $\psi_k(y)$ satisfying the  initial value problem near $y=0$:
\begin{align}\label{a3}
\begin{split}
&(a) \det L(y,d\psi_k)=\det\left(\partial_n\psi_k I+\sum^{n-1}_{j=0}B_j(y)\partial_j\psi_k\right)=0\\
&(b) \psi_k|_{y_n=0}=y_0.
\end{split}
\end{align}
We arrange for \eqref{a3} to hold by solving the eikonal initial value problem for the unknown $\psi_k$:
\begin{align}\label{a4}
\partial_n\psi_k=-\lambda_k(y,d' \psi_k), \;\; \psi_k|_{y_n=0}=y_0.
\end{align}
Since $L(y,\partial_y)$ has variable coefficients, the interior phases $\psi_k$ will be nonlinear. %even when $\psi_k(y',0)=y_0$.   If 
%$L$ had constant coefficients, the phases $\psi_k$ would be linear with this initial condition.

The initial condition and the  implicit function theorem imply that near $y=0$ we can write
\begin{align}\label{a5}
\psi_k(y)=(y_0-\phi_k(y'',y_n))\beta_k(y), 
\end{align}
for some smooth $\phi_k$ and $\beta_k$.   Since 
\begin{align}\label{a5z}
\psi_k|_{y_n=0}=y_0=(y_0-\phi_k(y'',0))\beta_k(y_0,y'',0),
\end{align}
we conclude $\phi_k(y'',0)=0$ by setting $y_0=\phi_k(y'',0)$ in \eqref{a5z}, and thus \eqref{a5z} implies $\beta_k(y_0,y'',0)=1$. 
%\footnote{Use $\psi_k|_{y_n=0}=y_0=(y_0-\phi_k(y'',0))\beta_k(y_0,y'',0)$. For more details of \eqref{a6} see May 18, 2022 p. 188.  }
%\begin{align}\label{a6}
%\beta_k>0\;\; (\text{in fact, }\beta_k(0,y'',0)=1\text{ when }\psi_0=t\text{ since }\partial_0\psi_k|_{y_n=0}=1)\text{ and }\phi_k(y'',0)=0.
%\end{align}
Using the positive homogeneity of $\lambda_k$ of degree one, we see that \eqref{a4} implies\footnote{ To see this substitute $\partial_n\psi_k=(-\partial_n\phi_k)\beta_k+(y_0-\phi_k)\partial_n\beta_k)$ in \eqref{a4} and set $y_0=\phi_k$.}  %For more details of \eqref{a7} see March 24, 2018 p. 134.    }
\begin{align}\label{a7}
\partial_n\phi_k(y'',y_n)=\lambda_k(\phi_k,y'',y_n,1,-d_{y''}\phi_k), \;\;\phi_k(y'',0)=0.
\end{align}
%as in \eqref{a0} and AJM.

\begin{rem}\label{aa7}
\textup{We use the functions $\phi_k$ in section \ref{O0} to show that near $y=0$ the characteristic surfaces $\Sigma_j=\{\psi_j=0\}$ 
satisfy 
\begin{align}\label{aa8}
\Sigma_j\cap \Sigma_k=\Delta \text{ for }j\neq k, \;\Sigma_k\cap\{x_n=0\}=\Delta, \; \Sigma_k\cap \{y_0=0\}=\Delta
\end{align}
where  all intersections are transversal.  Observe that while the surface $x_0-\phi_j=0$ is characteristic, the surface $x_0-\phi_j=c$ need not be characteristic for $c\neq 0$.   The functions $\psi_j$, however, have the property that $\psi_j=c$ is characteristic for all $c$ near $0$, a property that is needed in the construction of the approximate solution $u^\eps_a$. }
\end{rem}

\begin{rem}[Caution]\label{caution}
\textup{ By Definition \ref{pi} for a given $k$ we have 
\begin{align}\label{40b}
\mathbb{R}^N=\oplus^N_{m=1}\mathrm{span}\;R_m(y,d'\psi_k(y)),
\end{align}
where $R_k(y,d'\psi_k(y))=r_k(y)$, but $R_m(y,d'\psi_k(y))\neq r_m(y)$ for $m\neq k$.   However, \eqref{40b} implies that at $y=(y',0)$ we have 
\begin{align}\label{40c}
\mathbb{R}^N=\oplus^N_{m=1}\mathrm{span}\;r_m(y',0),
\end{align}
since $d'\psi_k(y',0)=d'y_0$ for all $k$.  %\footnote{We get $d'\psi_0(y')$ for all $k$ here, if we use the more general initial condition in \eqref{a4}.}
For $y$ near $0$ \eqref{40c} implies by continuity
\begin{align}\label{40d}
\mathbb{R}^N=\oplus^N_{m=1}\mathrm{span}\;r_m(y),
\end{align}
\emph{but} the projection on  $\mathrm{span}\;r_k(y)$ in the decomposition \eqref{40d} is \emph{not} $\pi_k(y)$, unless $y=(y',0)$.\footnote{The projection $\pi_k(y):=\Pi_k(y,d'\psi_k(y))$ is, rather, the projection on the $k-$th summand in the decomposition \eqref{40b}.} 
 Similarly, while we have $I=\sum_k\pi_k(y',0)$,   this equality \emph{fails} for general $y$ near $0$.    While $l_k(y',0)r_j(y',0)=\delta_{jk}$, this equality also fails for general $y$ near $0$.}

\end{rem}

%\begin{rem}

%1. For the initial condition in \eqref{a3} we can more generally take $\psi_k(y',0))=\psi_0(y'):=y_0\alpha(y'), \text{ where }\alpha>0$.  
%As a reminder of this we will  sometimes write $\psi_0$ in place of $y_0$.  We do this for example in the definition of 
%$\mathcal{L}_2(y,d\psi_0,\partial_{\theta_0,\xi_n})$ in \eqref{a9w}. 

%\end{rem}

%The following lemma is needed in section \ref{O0}.

\subsection{Preliminary computations}\label{strategy}

%To make things easier to write down, we work now with a $3\times 3$ system with two incoming phases $\psi_1$ and $\psi_3$, and one outgoing phase $\psi_2$.\footnote{See Remark \ref{incoming} for definitions.}  Our treatment of this case extends with no essential change to 
In this section we carry out some preliminary computations needed for the construction of the approximate solution $u^\eps_a$ to the system \eqref{ia1}.  % We have with $N$ phases $\psi_k$,  $p$ of which are incoming.    The boundary datum $g(y',\theta_0)$ is valued in $\mathbb{R}^p$ and $B$ is a $p\times N$ matrix. 
%Let $\psi=(\psi_1,\dots,\psi_N)$ and let $\theta=(\theta_1,\dots,\theta_N)$ be the corresponding placeholder for $\psi/\eps$.  
We look for an approximate solution of the form 
\begin{align}\label{ua}
\begin{split}
&u^\eps_a=U_0(y,\theta)|_{\theta=\frac{\psi}{\eps}}+\eps U^\eps_1(y),
 \text{ where }U_0(y,\theta)=\sum^N_{k=1}\sigma_k(y,\theta_k)r_k(y).
\end{split}
\end{align}
The $\sigma_k(y,\theta_k)$ are scalar functions with compact support in $\theta_k$ that will be determined by solving ``transport equations". 

Below we let $\theta_0$ and $\xi_n$ be placeholders for $\frac{y_0}{\eps}$ and $\frac{y_n}{\eps}$ respectively.
The ``corrector" $U^\eps_1$ has the form
\begin{align}\label{corr1}
U^\eps_1(y)=\mathcal{U}^\eps_1(\theta,\theta_0,\xi_n)|_{\theta=\frac{\psi}{\eps}, \theta_0=\frac{\psi_0}{\eps},\xi_n=\frac{y_n}{\eps}},
\end{align}
with
\begin{align}\label{corrector}
\begin{split}
&\mathcal{U}^\eps_1(y,\theta,\theta_0,\xi_n)):=V(y,\theta)+W^\eps(y,\theta_0,\xi_n), \text{ where }\\
&\quad V(y,\theta)=\sum^N_{k=1}V_k(y,\theta_k)\text{ and }\\
&\quad W^\eps(y,\theta_0,\xi_n)=\chi^\eps(y_0,y_n)W(y,\theta_0,\xi_n)\text{ with }W(y,\theta_0,\xi_n)=\sum^N_{k=1}t_k(y,\theta_0,\xi_n)r_k(y',0)
\end{split}
\end{align}
for a cutoff function $\chi^\eps$, vector functions $V_k$, and scalar functions $t_k$ to be constructed.

We want $L(y,\partial_y)u^\eps_a-f(y,u^\eps_a)$ to be ``small".\footnote{This means $o(1)$ in $L^\infty\cap N^m(U)$.}   To compute $L(y,\partial_y)u^\eps_a$  we begin by computing (set $B_n(y)=I$) for a fixed $k\in \{1,\dots,N\}$:
\begin{align}\label{a8} 
\begin{split}
&B_j\partial_j[\sigma_k(y,\frac{\psi_k(y)}{\eps})r_k(y)]=\left[\frac{1}{\eps}(\partial_j\psi_k(y) (\partial_{\theta_k} \sigma_k(y,\theta_k))B_jr_k+(\partial_j\sigma_k)B_jr_k+\sigma_kB_j\partial_jr_k\right]|_{\theta=\frac{\psi}{\eps}},\text{ so }\\
&L(y,\partial_y)[\sigma_k(y,\frac{\psi_k(y)}{\eps})r_k(y)]:=\left[\frac{1}{\eps}L(y,d\psi_k)(\partial_{\theta_k}\sigma_k)r_k+\sum^n_{j=0}(\partial_j\sigma_k) B_jr_k+\sum^n_{j=0}\sigma_kB_j\partial_jr_k\right]|_{\theta=\frac{\psi}{\eps}}=\\
&\qquad  \left[\frac{1}{\eps}L(y,d\psi_k)(\partial_{\theta_k}\sigma_k)r_k+(L(y,\partial_y)\sigma_k)r_k+\sigma_kL(y,\partial_y)r_k\right]|_{\theta=\frac{\psi}{\eps}}.
%\qquad \sum^n_{j=0}(\partial_j\sigma_k) B_jr_k+\sigma_k\sum^n_{j=0}B_j\partial_jr_k.
\end{split}
\end{align}
Define (with $\psi_0(y')=y_0$)
\begin{align}\label{a9w}
\mathcal{L}_1(y,d\psi,\partial_\theta)=\sum^N_{k=1}L(y,d\psi_k)\partial_{\theta_k} \text{ and }\mathcal{L}_2(y,d\psi_0,\partial_{\theta_0,\xi_n})=L(y,d\psi_0)\partial_{\theta_0}+\partial_{\xi_n}.
\end{align}
and observe that \eqref{a8} implies in view of the polarization of $U_0$:
\begin{align}\label{a9}
\begin{split}
&L(y,\partial_y)U_0(y,\frac{d\psi}{\eps})=\left[\frac{1}{\eps}\mathcal{L}_1(y,d\psi,\partial_\theta)U_0(y,\theta)+L(y,\partial_y)U_0(y,\theta)\right]|_{\theta=\frac{\psi}{\eps}}\\
&\quad \left[\frac{1}{\eps}\mathcal{L}_1(y,d\psi,\partial_\theta)U_0(y,\theta)+\sum^N_{k=1}(L(y,\partial_y)\sigma_k(y,\theta_k))r_k(y)+\sum^3_{k=1}\sigma_k(y,\theta_k)L(y,\partial_y)r_k(y)\right]|_{\theta=\frac{\psi}{\eps}}=\\
&\qquad \left[\sum^N_{k=1}(L(y,\partial_y)\sigma_k(y,\theta_k))r_k(y)+\sum^N_{k=1}\sigma_k(y,\theta_k)L(y,\partial_y)r_k(y)\right]|_{\theta=\frac{\psi}{\eps}}.
\end{split}
\end{align}
Similarly we have
\begin{align}
\begin{split}
&L(y,\partial_y) V(y,\frac{\psi}{\eps}) =\left[\frac{1}{\eps}\mathcal{L}_1(y,d\psi,\partial_\theta)V(y,\theta)+L(y,\partial_y)V(y,\theta)\right]|_{\theta=\frac{\psi}{\eps}}\\
&L(y,\partial_y)W^\eps(y,\frac{\psi_0}{\eps},\frac{y_n}{\eps}) =\left[\frac{1}{\eps}\mathcal{L}_2(y,d\psi_0,\partial_{\theta_0,\xi_n})W^\eps(y,\theta_0,\xi_n)+L(y,\partial_y)W^\eps(y,\theta_0,\xi_n))\right]|_{\theta_0=\frac{\psi_0}{\eps}, \xi_n=\frac{y_n}{\eps}}.
\end{split}
\end{align}
We can expand:\footnote{Here $K(y,U_0,\eps \mathcal{U}^\eps_1)=\int^1_0 f_u(y,U_0+s\eps \mathcal{U}^\eps_1)ds.$}
\begin{align}
f(y,u^\eps_a)=[f(y,U_0(y,\theta))+\eps K(y,U_0,\eps \mathcal{U}^\eps_1)\mathcal{U}^\eps_1]|_{\theta=\frac{\psi}{\eps}, \theta_0=\frac{\psi_0}{\eps},\xi_n=\frac{y_n}{\eps}}.
\end{align}
Thus, we have $L(y,\partial_y)u^\eps_a-f(y,u^\eps_a)=\eps^{-1}\mathcal{E}_{-1}^\eps+\eps^0\mathcal{E}_0^\eps+\eps\mathcal{E}_1^\eps:=r^\eps_a$,  where 
\begin{align}\label{a9z}
\begin{split}
&\mathcal{E}^\eps_{-1}=0,\\
&\mathcal{E}_0^\eps=\left[\sum^N_{k=1}(L(y,\partial_y)\sigma_k)r_k+\sum^N_{k=1}\sigma_k L(y,\partial_y)r_k-f(y,U_0)+\mathcal{L}_1(y,d\psi,\partial_\theta)V+\mathcal{L}_2(y,d\psi_0,\partial_{\theta_0,\xi_n})W^\eps\right]|_{\theta=\frac{\psi}{\eps}, \theta_0=\frac{\psi_0}{\eps},\xi_n=\frac{y_n}{\eps}}\\
&\mathcal{E}_1^\eps=\left[ -K(y,U_0,\eps \mathcal{U}^\eps_1)\mathcal{U}^\eps_1+L(y,\partial_y) V+L(y,\partial_y)W^\eps\right]|_{\theta=\frac{\psi}{\eps}, \theta_0=\frac{\psi_0}{\eps},\xi_n=\frac{y_n}{\eps}}.
\end{split}
\end{align}
Our next task is to choose $U_0$, $V$, and $W^\eps$ to make $\mathcal{E}_0^\eps$ as small as possible.

\subsection{Temporary choice of a quadratic nonlinearity} We now make a preliminary choice of $f(y,u)$ as an at most quadratic function of $u$ 
in order to make all pulse interactions readily visible in the construction of the corrector. We show in section \ref{genf} that everything generalizes to the case where $f$ is any smooth function as in Theorem \ref{mr}.

Any function $u:\mathbb{R}^{1+n}_y\to\mathbb{R}^N$  can be written $$u(y)=\sum_ks_k(y)r_k(y)$$ for $r_k$ as in Definition \ref{pi}  and some scalar functions $s_k$. Set $s=(s_1,\dots,s_N)$.
We take $f$  for now to have only   linear and a quadratic parts with respect to $s$:
\begin{align}\label{c1}
\begin{split}
&f(y,u):=f^l(y,s)+f^q(y,s), \text{ where }\\
&f^l(y,s)=\sum_k f^l_k(y,s)r_k=\sum_k\left(\sum_mf^l_{km}(y)s_m\right)r_k\\
&f^q(y,s)=\sum_k f^q_k(y,s)r_k=\sum_k\left(\sum_{m\leq p}f^q_{kmp}(y)s_ms_p\right)r_k.
\end{split}
\end{align}
%Here the $f^l_k$ (resp.  $f^q_k$) are linear (resp. quadratic) polynomials in $s$ with smooth coefficients:
%\begin{align}\label{c1a}
%f^l_k(y,s)=\sum_m\alpha_{km}(y)s_m, \;\;q_k(y,s)=\sum_{m\leq p} \beta_{kmp}(y)s_ms_p.
%\end{align}
%If $u$ is a function of $(y,\theta)$ rather than $y$ we take the $s_k=s_k(y,\theta)$ and leave everything else the same in the definition of $f(y,u(y,\theta))$.
In the case where $u$ is a function of $(y,\theta)$, the same definition of $f(y,u)$ applies with $s_k=s_k(y,\theta)$.

\subsection{Definition of the projection operator $E$}\label{e}
In this section we define a projection operator $E=P\circ S$ (or $PS$)  that we will use as follows in our effort to make $\mathcal{E}_0^\eps$ small.
We first write (with $U_0=U_0(y,\theta)$):
\begin{align}\label{c3y}
\begin{split}
&\mathcal{F}(y,U_0,\partial_y U_0):=L(y,\partial_y)U_0-f(y,U_0)=\\
&\qquad \sum^N_{k=1}(L(y,\partial_y)\sigma_k)r_k+  \sum^N_{k=1}\sigma_k L(y,\partial_y)r_k-f(y,U_0)=E\mathcal{F}+(1-E)\mathcal{F}.
\end{split}
\end{align}
We will construct  $U_0$ to satisfy
\begin{align}\label{c3} 
E\mathcal{F}=PS\mathcal{F}=0.
\end{align}
Next we write\footnote{A belated motivation for the decomposition of $\mathcal{F}$ given by \eqref{c3}, \eqref{c3z} is given in Remark \ref{d10y}.} 
\begin{align}\label{c3z}
\begin{split}
&(1-E)\mathcal{F}=H_1+H_2,  \text{ where }\\
&H_1=(1-E)S\mathcal{F} =(1-P)S\mathcal{F}\text{ and }H_2=(1-E)(1-S)\mathcal{F}=(1-S)\mathcal{F}.
\end{split}
\end{align}
We will construct  $V$ and $W$ to satisfy:\footnote{See Remark \ref{c5}.}
\begin{align}\label{c4}
\begin{split}
&(a) \mathcal{L}_1(y,d\psi,\partial_\theta)V+H_1=0\\
&(b) \mathcal{L}_2(y',0,d\psi_0,\partial_{\theta_0,\xi_n})W+\mathcal{H}_2=0,
\end{split}
\end{align} 
where $\mathcal{H}_2(y,\theta_0,\xi_n)$ is a slightly modified version of $H_2(y,\theta)$ specified later; see \eqref{d1}. 
%The exact problem satisfied by $W$ is described later (see ...). 
\begin{rem}\label{c4y}\label{explain}
\textup{We use the operator  $\mathcal{L}_{2,0}:=\mathcal{L}_2(y',0,d\psi_0,\partial_{\theta_0,\xi_n})$  in \eqref{c4}(b) rather than
$\mathcal{L}_2=\mathcal{L}_2(y,d\psi_0,\partial_{\theta_0,\xi_n})$, because $\mathcal{L}_{2,0}$ is diagonalizable by the matrix 
$\begin{pmatrix}r_1(y',0)&\cdots &r_N(y',0)\end{pmatrix}$, while we do not see how to diagonalize $\mathcal{L}_2$; see   
\eqref{d10}.   This property of the $r_k(y',0)$ also explains their use, rather than the $r_k(y),$  in \eqref{d9}.}

\textup{The use of  $\mathcal{L}_{2,0}$ here introduces an error (see \eqref{d13})  that is estimated in section \ref{L20}.}
\end{rem}

To define $E=PS$ we first define the action of $S$ on $f(y,U_0)$ using \eqref{c1}, except that now we write $\sigma=(\sigma_1,\dots,\sigma_N)$ instead of $s=(s_1,\dots,s_N)$.     It is convenient to introduce the following notation:
\begin{defn}\label{c4w}
If $f(y,\sigma)$ is any element of $C^\infty(\mathbb{R}^{1+n}_+\times\mathbb{R}^N,\mathbb{R}^N)$  (or $C^\infty(\mathbb{R}^{1+n}_+\times\mathbb{R}^N,\mathbb{R})$), 
set $f^k(y,\sigma_k):=f(y,\sigma_ke_k)$, where $e_k$ is the $k-$th standard basis vector of $\mathbb{R}^N$.   When $f$ is either $f^l$ or $f^q$, we write $f^{l,k}$ or $f^{q,k}$ for $f^k$.
\end{defn}

Roughly, $S$ \emph{selects} the part of $f(y,U_0)$ that can be written as a sum of terms involving just one of the $\sigma_k$.
We set
\begin{align}\label{c4aa}
\begin{split}
&Sf(y,U_0):=\sum^N_{k=1}f^{l,k}(y,\sigma_k)+\sum^N_{k=1}f^{q,k}(y,\sigma_k)=f^l(y,\sigma)+\sum^N_{k=1}f^{q,k}(y,\sigma_k).
%f^l(y,\sigma)+f^q(y,\sigma_1,0,0)+f^q(y,0,\sigma_2,0)+f^q(0,0,\sigma_3)=\\
%&\qquad [f^l(y,\sigma_1,0,0)+f^l(y,0,\sigma_2,0)+f^l(0,0,\sigma_3)]+[f^q(y,\sigma_1,0,0)+f^q(y,0,\sigma_2,0)+f^q(0,0,\sigma_3)].
\end{split}
\end{align}
The operator $P$ then \emph{polarizes} these terms by applying the projection $\pi_k(y)$ (as in Definition \ref{pi}(b)) to any term that depends only on $\sigma_k$.   Thus,  we obtain
\begin{align}\label{c4d}
\begin{split}
&Ef(y,U_0)=PSf(y,U_0)=\sum^N_{k=1}\pi_k(y)[f^{l,k}(y,\sigma_k)+f^{q,k}(y,\sigma_k)].
%&\quad \pi_1[f^l(y,\sigma_1,0,0)+f^q(y,\sigma_1,0,0)]+\pi_2[f^l(y,0,\sigma_2,,0)+f^q(y,0,\sigma_2,0)]+\pi_3[f^l(y,0,0,\sigma_3)+f^q(y,0,0,\sigma_3)].
\end{split}
\end{align}
The other terms in $\mathcal{F}$,  namely 
\begin{align}\label{c4a}
\sum^N_{k=1}(L(y,\partial_y)\sigma_k)r_k:=\ell_1(y,\partial_yU_0)\text{ and }     \sum^N_{k=1}\sigma_k L(y,\partial_y)r_k:=\ell_2(y,U_0), 
\end{align}
are linear functions of $\sigma_y$ and $\sigma$ respectively.   Parallel to \eqref{c4d} we define
\begin{align}\label{c5z}
\begin{split}
&E\ell_1(y,\partial_yU_0):=\sum^N_{k=1}\pi_k(y)\left[(L(y,\partial_y)\sigma_k(y,\theta_k))r_k(y)\right]\\
&E\ell_2(y,U_0)\sum^N_{k=1}\pi_k(y)\left[\sigma_k(y,\theta_k)L(y,\partial_y)r_k(y)\right].
\end{split}
\end{align}

%We define $E\ell_2(y,U_0)$ as above, and define $E\ell_1(y,\partial_y U_0)$ in the same way, but  with $\partial_y\sigma$ playing the role of $\sigma$.

\begin{rem}\label{c5}
\textup{(a) From \eqref{c3} and \eqref{c4} we see that the choice of $U_0$ makes the polarized part of $S\mathcal{F}$ vanish, and  $V$ ``solves away" the unpolarized part of $S\mathcal{F}$, namely $H_1=(1-P)SF$.  Similarly, $W$ solves away ``most of" the multiphase part of $\mathcal{F}$, namely $H_2=(1-S)\mathcal{F}$.}

\textup{(b) The operators $\mathcal{L}_i$, $i=1,2$ in \eqref{c4} are singular.     The equation  \eqref{c3} both helps to determine the $\sigma_k$ \emph{and} acts as a solvability condition that will allow us to solve the equations \eqref{c4} using bounded profiles $V$ and $W$; see \eqref{c14}.}

\end{rem}

\subsection{Construction of the leading profile $U_0(y,\theta)$}\label{U0}

We now construct $U_0(y,\theta)$ to satisfy $E\mathcal{F}=0$ \eqref{c3} with a  boundary condition corresponding to 
\begin{align}\label{c8}
BU_0(y,\frac{\psi}{\eps})|_{y_n=0}=g(y',\frac{y_0}{\eps})\;\;(\text{recall }\psi_k|_{y_n=0}=y_0).  
\end{align}
That is, we want $U_0(y,\theta)$ to satisfy
\begin{align}\label{c9}
\begin{split}
&(a) E\left[L(y,\partial_y)U_0-f(y,U_0)\right]=0 \text{ in }y_n>0,\\
&(b) BU_0(y',0,\theta_0,\dots,\theta_0)=g(y',\theta_0)\\
&(c) U_0=0\text{ in }y_0<0.
\end{split}
\end{align}

To solve \eqref{c9}  we first use \eqref{c4d} to compute
\begin{align}\label{c6}
\begin{split}
&EL(y,\partial_y)U_0=E\left[\sum^N_{k=1}(L(y,\partial_y)\sigma_k(y,\theta_k))r_k(y)+\sum^N_{k=1}\sigma_k(y,\theta_k)L(y,\partial_y)r_k(y)\right]=\\
&\quad \sum^N_{k=1}\pi_k(y)\left[(L(y,\partial_y)\sigma_k(y,\theta_k))r_k(y)\right]+\sum^N_{k=1}\pi_k(y)\left[\sigma_k(y,\theta_k)L(y,\partial_y)r_k(y)\right]=\\
&\quad\quad \sum^N_{k=1}l_k(y)\left[\sum^n_{j=0}(\partial_j\sigma_k) B_j(y)r_k(y)\right]r_k(y)+\sum^N_{k=1}l_k(y)\left[\sigma_kL(y,\partial_y)r_k\right]r_k(y)=\\
&\quad\quad\quad \sum^N_{k=1} \left[X_k(y,\partial_y)\sigma_k +c_k(y)\sigma_k\right]r_k(y).
\end{split}
\end{align}
In the last line $X_k(y,\partial_y)$, the characteristic vector field associated to the phase $\psi_k$, and the coefficients $c_k(y)$ are defined by
\begin{align}
\begin{split}
&X_k(y,\partial_y)=\partial_n+\sum^{n-1}_{j=0}\partial_{\eta_j}\lambda_k(y,d'\psi_k)\partial_j\\
&c_k(y)=l_k(y)L(y,\partial_y)r_k(y).
\end{split}
\end{align}
We have used Corollary \ref{a2c} here to obtain $X_k$.

\begin{rem}[Incoming vs. outgoing]\label{incoming}

\textup{(a) A phase $\psi_k$  is said to be \emph{incoming} (resp., \emph{outgoing}) when 
the group velocity described by the corresponding vector field $X_k$ is incoming (resp., outgoing). 
We say $X_k$ (or its group velocity) is {incoming} (resp. {outgoing}) when the coefficients of $\partial_n$ and $\partial_{0}=\partial_t$ in $X_k$ have the same (resp. opposite) signs.   %The number of incoming vector fields is equal to the number of positive eigenvalues of 
Corollary \ref{a2c} implies that $p$ is equal to the number of positive eigenvalues of $A_n$ in \eqref{ia1}.   
Relabeling if necessary,  we  take the vector fields $X_k$, $k=1,\dots,p$ to be incoming and the others to be outgoing. } 

\textup{(b) Observe that $X_k(y,\partial_y)$ is {tangent} to the characteristic surfaces $\psi_k(y)=c$.\footnote{Use the eikonal equation and the Euler identity for positively homogeneous functions to see this.}}

\end{rem}

Next we use \eqref{c4d} to compute 
\begin{align}\label{c7}
Ef(y,U_0)=\sum^N_{k=1}l_k(y)\left[f^{l,k}(y,\sigma_k)+f^{q,k}(y,\sigma_k)\right]r_k:=\sum^N_{k=1}[d_k(y)\sigma_k+e_k(y)\sigma^2_k]r_k.
\end{align}
%First use \eqref{c1} to compute, for example,\footnote{Recall Remark \ref{caution}.  Note that if $X=c_1r_1+c_2r_2+c_3r_3$, it is \emph{not} true in general that $\pi_kX=c_kr_k$.} 
%\begin{align}
%\begin{split}
%&f^l(y,\sigma_1,0,0)=\sum_k  f^l_{k1}\sigma_1 r_k\text{ and }f^q(y,\sigma_1,0,0)=\sum_k  f^q_{k11}\sigma_1^2 r_k;\text{ so }\\
%&\quad \pi_1f^l(y,\sigma_1,0,0)=\tilde f^l_{1}(y)\sigma_1r_1\text{ and }\pi_1f^q(y,\sigma_1,0,0)=\tilde f^q_{11}(y)\sigma_1^2 r_1, \text{ where }\\
%&\quad \quad \tilde f^l_1:=\sum_k  f^l_{k1}l_1r_k\text{ and }\tilde f^q_{11}:=\sum_k f^q_{k11}l_1r_k.
%\end{split}
%\end{align}
%Thus,
%\begin{align}\label{c7}
%Ef(y,U_0)=\sum^3_{k=1}[\tilde f^l_{k}(y)\sigma_k+\tilde f^q_{kk}(y)\sigma_k^2]r_k.
%end{align}
With \eqref{c6} and \eqref{c7}  we see that \eqref{c9}(a) is equivalent to
\begin{align}\label{c10}
X_k(y,\partial_y)\sigma_k+c_k(y)\sigma_k-(d_k(y)\sigma_k+e_k(y)\sigma_k^2)=0, \; k=1,\dots,N.
\end{align}
The boundary condition \eqref{c9}(b) is  satisfied provided 
\begin{align}\label{c11} 
\begin{split}
&BU_0(y',0,\theta_0,\dots,\theta_0)=g(y',\theta_0)\Leftrightarrow \sum^N_{k=1}\sigma_kBr_k=g\Leftrightarrow\\
&\qquad \begin{pmatrix}Br_1&\dots&Br_p\end{pmatrix}\begin{pmatrix}\sigma_1\\\vdots\\\sigma_p\end{pmatrix}=g-\sum^N_{k=p+1}\sigma_kBr_k.
\end{split}
\end{align}
We claim $\sigma_k(y,\theta_k)=0$ for $k\geq p+1$.   This  follows from \eqref{c10} since $X_k(y,\partial_y)$ is  \emph{outgoing} for such $k$ and $\sigma_k=0$ in $y_0<0$. The uniform Lopatinski condition implies $\begin{pmatrix}Br_1&\dots&Br_p\end{pmatrix}$ is invertible, so \eqref{c11} determines explicit functions $\sigma^*_1(y',\theta_0)$, $\dots$, $\sigma^*_p(y',\theta_0)$ supported in $y_0\geq 0$ as boundary values of $\sigma_1$, $\dots$, $\sigma_p$.  Except for special choices of $g(y',\theta_0)$, all these boundary values will be nonzero.

Summarizing, we have shown that outgoing profiles $\sigma_k=0$ for $k\geq p+1$,  and that incoming profiles $\sigma_1$, $\dots$,  $\sigma_p$ are determined by solving:
\begin{align}\label{c12}
\begin{split}
&X_k(y,\partial_y)\sigma_k+c_k(y)\sigma_k-(d_k\sigma_k+e_k\sigma_k^2)=0\\
&\sigma_k(y',0,\theta_k)=\sigma_k^*(y',\theta_k) \text{ for }k=1,\dots,p.
\end{split}
\end{align}
This problem, in which $\theta_k$ is just a parameter,  has $C^\infty$ solutions on a fixed time interval, so we now have $U_0(y,\theta)$ defined and satisfying \eqref{c9}  for $y$ in some open set $U\ni 0$.\footnote{The method of characteristics can be used to solve \eqref{c12}.}

%\begin{rem}\label{c13z}
%The system \eqref{c12} is decoupled, so this shows that, as expected, pulses do not ``interact to leading order".   However, we will see below that it is essential to understand and estimate the rather complicated pulse interactions in the error analysis. 

%\end{rem}

\subsection{Construction of the corrector $\eps U^\eps_1(y)$.}\label{corr2}
We begin by constructing the part of the corrector $U^\eps_1$ determined by $V(y,\theta)=\sum^N_{k=1}V_k(y,\theta_k)$ as in \eqref{corrector}.

\subsubsection{The noninteraction term $V(y,\theta)$.} \label{V}
  
The construction of each $V_k$ is essentially the same as the construction of the corrector  in  \cite{ar2}  for the one phase case.  Recall (Definition \ref{pi})  that for $\eta'$ near $(1,0,\dots,0)$ the matrix $\mathcal{A}(y,\eta')$ has $N$ eigenvalues $\lambda_k(y,\eta')$, and corresponding left and right eigenvectors 
$L_k(y,\eta')$, $R_k(y,\eta')$ such that $L_jR_k=\delta_{jk}$.  We have denoted by $\Pi_k(y,\eta')$ the projection on $\mathrm{span}\;R_k(y,\eta')$ in the decomposition 
$$\mathbb{R}^N=\oplus^N_{m=1}\mathrm{span}\;R_m(y,\eta').$$
Thus, for $x\in\mathbb{R}^N$ we have $\Pi_k(y,\eta')x=(L_k(y,\eta')x)R_k(y,\eta')$.

We want $V(y,\theta)$ to satisfy
\begin{align}\label{c13} 
\mathcal{L}_1(y,d\psi,\partial_\theta)V=-H_1,
\end{align}
for $H_1$ as in \eqref{c3z} and $\mathcal{L}_1(y,d\psi,\partial_\theta)=\sum^N_{k=1}L(y,d\psi_k)\partial_{\theta_k}$.  

First we define a partial inverse $Q_k(y)$ for each matrix $L(y,d\psi_k)$ such that
\begin{align}\label{c14}
L(y,d\psi_k)Q_k(y)=Q_k(y)L(y,\psi_k)=1-\Pi_k(y,d'\psi_k)=1-\pi_k(y).
\end{align}
For each $k$ we have 
\begin{align}\label{c15}
\begin{split}
&\mathcal{A}(y,\eta')=\sum^N_{j=1}\lambda_j(y,\eta')\Pi_j(y,\eta')\Rightarrow \mathcal{A}(y,d'\psi_k)=\sum^N_{j=1}\lambda_j(y,d'\psi_k)\Pi_j(y,d'\psi_k),
\end{split}
\end{align}
so using the eikonal equation \eqref{a4} we obtain
\begin{align}
L(y,d\psi_k)=\partial_n\psi_k I+\mathcal{A}(y,d'\psi_k)=\sum^N_{j=1}\left(\lambda_j(y,d'\psi_k)-\lambda_k(y,d'\psi_k)\right)\Pi_j(y,d'\psi_k).
\end{align}
Thus,  
\begin{align}\label{Qk}
Q_k(y):=\sum_{j\neq k}\left(\lambda_j(y,d'\psi_k)-\lambda_k(y,d'\psi_k)\right)^{-1}\Pi_j(y,d'\psi_k)
\end{align}
satisfies \eqref{c14}.

    Since $H_1(y,\theta)=(1-P)S\mathcal{F}$, it has the form
    \begin{align}\label{c16}
    H_1(y,\theta)=\sum^N_{k=1}H_{1k}(y,\theta_k).
    \end{align}
%To express each $H_{1k}$ in terms of $\sigma_k$, let us set $f^{l,k}(y,\sigma_k)=f^l(y,...,\sigma_k,...)$, where $\sigma_k$ occurs in the $k-$th entry of $f^l(y,\cdot)$ and the remaining entries are zeros (see \eqref{c4aa}), and define $f^{q,k}(y,\sigma_k)$ similarly.   Then 
With $f^{l,k}$ and $f^{q,k}$ as in Definition \ref{c4w}, we have 
\begin{align}\label{c16z}
H_{1k}(y,\theta_k)=(1-\pi_k(y))\left[(L(y,\partial_y)\sigma_k)r_k+\sigma_kL(y,\partial_y)r_k-\left(f^{l,k}(y,\sigma_k)+f^{q,k}(y,\sigma_k)\right)\right].
\end{align}
  Using this and \eqref{c14}, we see that if we define
\begin{align}\label{Vk}
V_k(y,\theta_k):=-\int^{\theta_k}_{-\infty}Q_k(y)H_{1k}(y,s)ds,
\end{align}
then $V(y,\theta)=\sum_kV_k(y,\theta_k)$ satisfies \eqref{c13}.  

\begin{rem}\label{decV}

\textup{(a) For each $k$, $H_{1k}(y,\theta_k)$ is a finite sum of terms of the form
\begin{align}\label{c17}
(\partial_y\sigma_k(y,\theta_k))C(y),\; \sigma_k(y,\theta_k) C(y), \text{ or }\sigma_k^2(y,\theta_k) C(y),
\end{align}
where $C(y)$ is an  $\mathbb{R}^N$-valued $C^\infty$ function such that $\pi_k(y)C(y)=0$, and which varies from term to term.\footnote{Observe that a vector $Y$ is in the range of $L(y,d\psi_k)$ if and only if $\pi_k(y)Y=0$.}}

\textup{(b) Since each profile $\sigma_k$ has compact support in $\theta_k$,  the integral in \eqref{Vk} is finite. 
The functions $V_k(y,\theta_k)$ vanish in $\theta_k<-M$ for $M>0$ large enough, but have nonzero finite limits as $\theta_k\to +\infty$.}

\end{rem}

\subsubsection{Choice of $\Omega^0$.}\label{O0}

%25. See p. 204-205 in May 2022 for proof that the $\Sigma_j$ are disjoint on a small enough neighborhood $\Omega^0$ of $y=0$.  They can be extended globally to $\mathbb{R}^{n+1}_+$ as characteristic surfaces for an extended and modified $L(y,\partial_y)$ 
%operator whose coefficients $\tilde B_j(y)$ are constructed as in AJM to equal the original $B_j(y)$ on $\Omega^0$, to equal  $B_j(0)$  for $|y|$  large, and to remain close to $B_j(0)$ for all $y$.  

The remaining part of the construction of $u^\eps_a$ and the error analysis will take place within a small enough, and in particular  \emph{bounded}, connected open neighborhood of $0$,  $\Omega^0\subset\{y:y_n\geq 0\}$, that is independent of $\eps$.  Here we specify the choice of $\Omega^0$.   First we require that the structural assumptions (Assumptions \ref{sh}, \ref{nonch}, \ref{ul}) 
all hold on $\Omega^0$ We also require that
the solutions $\psi_k$  to the eikonal equation exist on $\Omega^0$ and satisfy   
%\begin{align}\label{O0}
%\psi_k(y)=(y_0-\phi_k(y'',y_n))\beta_k,  \;\;\; \psi_k(y',0)=y_0
%\end{align}
%for $\phi_k$, $\beta_k$ as in 
\eqref{a4}-\eqref{a7} there.  %\footnote{The reader might wish to postpone reading the rest of this section until the error analysis.}   
Moreover, we require 
\begin{align}\label{28kw}
\mathbb{R}^N=\oplus^N_{k=1}\mathrm{span}\;r_k(y)=\oplus^N_{k=1}\mathrm{span}\;r_k(y',0)\text{ for }y\in \Omega^0.
\end{align}

% \eqref{40d} holds on $\Omega^0$.

The next lemma collects some obvious relations satisfied by the $\psi_j$, $\phi_j$ and needed below.

\begin{lem}[Relations between phases]\label{relations}
Let $\omega_k(y'):=\partial_n\psi_k(y',0)$. For $y\in\Omega^0$ we have for all $k$:
\begin{align}\label{28k}
\begin{split}
&(a)\psi_k(y)=(y_0-\phi_k(y'',y_n))\beta_k(y)\text{ where }\phi_k(y'',0)=0\text{ and }\beta_k(y',0)=1\\
&(b)\psi_k(y)=y_0-\phi_k(y'',y_n)+O(y_0y_n)+O(y_n^2)\\
&(c)\omega_k(y')y_n=-\phi_k(y'',y_n)+O(y_0y_n)+O(y_n^2)=-\partial_n\phi(y'',0)y_n+O(y_0y_n)+O(y_n^2).\\
&(d) \phi_k(y'',y_n)=\partial_n\phi_k(y'',0)y_n+O(y^2_n).
\end{split}
\end{align}
\end{lem}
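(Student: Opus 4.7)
The plan is to derive all four relations by combining the factorization \eqref{a5} already established in Section \ref{eikonal} with Taylor expansion in $y_n$ around $y_n=0$. Part (a) is already essentially proved in the excerpt: the implicit function theorem applied to $\psi_k|_{y_n=0}=y_0$ yields the factorization, and plugging $y_n=0$ into the identity $(y_0-\phi_k(y'',0))\beta_k(y_0,y'',0)=y_0$ and then setting $y_0=\phi_k(y'',0)$ forces $\phi_k(y'',0)=0$, whence $\beta_k(y',0)=1$.

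For (b), I would Taylor expand $\beta_k$ in $y_n$ around $y_n=0$, writing $\beta_k(y)=1+O(y_n)$ by (a). Then
\begin{equation*}
\psi_k(y)=(y_0-\phi_k(y'',y_n))(1+O(y_n))=y_0-\phi_k(y'',y_n)+y_0\cdot O(y_n)-\phi_k(y'',y_n)\cdot O(y_n).
\end{equation*}
Since $\phi_k(y'',0)=0$, Taylor's theorem gives $\phi_k(y'',y_n)=O(y_n)$, so the last term is $O(y_n^2)$, yielding (b).

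Part (d) is just Taylor's theorem in $y_n$ applied to $\phi_k(y'',y_n)$ combined with $\phi_k(y'',0)=0$. For the first equality in (c), I would Taylor expand $\psi_k(y)$ directly in $y_n$ around $y_n=0$: using $\psi_k(y',0)=y_0$ and $\omega_k(y')=\partial_n\psi_k(y',0)$,
\begin{equation*}
\psi_k(y)=y_0+\omega_k(y')y_n+O(y_n^2).
\end{equation*}
Comparing this with (b) and cancelling $y_0$ gives $\omega_k(y')y_n=-\phi_k(y'',y_n)+O(y_0y_n)+O(y_n^2)$. The second equality in (c) then follows immediately by substituting the expansion from (d).

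There is no real obstacle here; the lemma is a short record of Taylor-expansion consequences of the factored form (a), and the only mild care needed is keeping track of which error terms are $O(y_0y_n)$ versus $O(y_n^2)$. These relations are the bookkeeping that will later let Section \ref{multicorrector} exploit the fact that all phases $\psi_k$ coincide with $t=y_0$ on $\{y_n=0\}$ and differ from each other only at order $O(y_n)$ in the interaction region $I_\eps=\{|y_0|\lesssim\eps,\,|y_n|\lesssim\eps\}$.
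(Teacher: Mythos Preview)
Your proposal is correct and follows essentially the same approach as the paper: part (a) is recalled from the earlier factorization, (b) comes from writing $\beta_k(y)=1+O(y_n)$ and multiplying out, (c) from comparing (b) with the direct Taylor expansion $\psi_k(y)=y_0+\omega_k(y')y_n+O(y_n^2)$, and (d) is immediate from Taylor's theorem. If anything, you have supplied slightly more detail than the paper in handling the cross term $\phi_k(y'',y_n)\cdot O(y_n)$ in (b).
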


\begin{proof}
For (a) recall \eqref{a5} and the discussion just after it.  For (b) write $\beta_k(y)=1+O(y_n)$ and use part (a).  For (c) write
\begin{align}
\psi_k(y)=\psi_k(y',0)+\partial_n\psi_k(y',0)y_n+O(y_n^2)=y_0+\omega_k(y')y_n+O(y_n^2)
\end{align}
and use part (b). Part (d) is immediate.

\end{proof}

Let $\gamma_k$, $k=1,\dots,N$ denote the distinct real eigenvalues of $B_0(0)=\mathcal{A}(0,1,0)$ ($1=\eta_0$ here).    They are nonzero because $y_n=0$ is noncharacteristic.   We have 
\begin{align}
\partial_n\phi_k(0)=\lambda_k(0,1,0)=\gamma_k,
\end{align}
so if $\Omega^0$ is small enough, the characteristic surfaces 
\begin{align}\label{28kz}
\Sigma_j=\{y:y_0=\phi_j(y'',y_n)\}
\end{align}
 will be disjoint on  $y_n>0$.  To see this, first observe that we  can choose $\Omega^0$ so that there exist positive constants $\delta$, $\kappa$ such that on $\Omega^0$:
\begin{align}\label{phases}
\begin{split}
%&(a) \phi_k(y'',0)=0\text{ and }\phi_k(y'',y_n)=\gamma_ky_n \text{ for }|y'',y_n| \text{ large.}\\
&(a) \partial_{y''}\phi_k<\kappa\text{ and }|\partial_n\phi_k-\gamma_k|<\frac{\delta}{3}, \text{ where }\delta<\frac{1}{4}\inf_{k\neq j}|\gamma_j-\gamma_k|\\
&(b) |\lambda_k(y,1,\eta'')-\gamma_k|<\frac{\delta}{3}\text{ for }|\eta''|< \kappa \eta_0=\kappa\cdot  1.
\end{split}
\end{align}
%The disjointness of the  surfaces $\Sigma_j$ in $y_n>0$ is implied by the eikonal equation \eqref{a7} and \eqref{phases}.
In view of Lemma \ref{relations} we can choose $\Omega^0$ so that additionally:\footnote{The condition \eqref{phases2}(d) is a slight variant of \eqref{phases}(a).   It  does not appear in \cite{metajm}  but is needed here for the error analysis.}
\begin{align}\label{phases2}
\begin{split}
(c) |\phi_k(y'',y_n)-\gamma_ky_n|<\frac{\delta}{2} y_n\\
(d) |-\omega_k(y')y_n-\gamma_k y_n|<\frac{\delta}{2} y_n.
\end{split}
\end{align}
The disjointness of the  surfaces $\Sigma_j$ in $\Omega^0$ is obvious from \eqref{phases2}(c)  and the choice of $\delta$.
We now  complete the specification  of $\Omega^0$  by shrinking  it if necessary so that both 
$U_0(y,\theta)$ and $V(y,\theta)$ lie in  $C^\infty_b(\Omega^0\times\mathbb{R}^N_\theta).$

We conclude this section by describing a partition of unity subordinate to a cover of $y_n\geq 0$ by wedges that we need for the error analysis.\footnote{A similar partition is used in \cite{metajm}.}   For $\delta$ as above let $I_j=(\gamma_j-\delta,\gamma_j+\delta)$.   There exist functions 
$\tilde\chi_j\in C^\infty_c(\mathbb{R})$, $j=1,\dots,N$ and $\tilde\chi_0\in C^\infty(\mathbb{R})$ such that
\begin{align}\label{partition}
\begin{split}
&\sum^N_{j=0}\tilde \chi_j(t)=1\text{ and }\tilde\chi_j=1\text{ on }\overline{I_j}\\
&\mathrm{supp}\;\tilde\chi_j\cap\overline{I_k}=\emptyset\text{ if }j\neq k,
\end{split}
\end{align}
Set $\chi_j(y):=\tilde\chi_j(\frac{y_0}{y_n})$ and define wedges
\begin{align}\label{wedge}
\begin{split}
&W_j':=\{y:\frac{y_0}{y_n}\in I_j, y_n>0 \}, \;j=1,\dots, N\\
&W_j:=\{y:\frac{y_0}{y_n}\in \mathrm{supp}\;\chi_j, y_n>0\}, \;j=0,1,\dots, N.
\end{split}
\end{align}
So in $y_n>0$ we have $\Sigma_j \subset W_j'\subset \subset W_j$ for $j=1,\dots,N$.

The next definition is formulated in \cite{metajm}.   
\begin{defn}\label{regul}
(a)  Let $\mathcal{M}_0$ denote the space of vector fields on $\Omega^0$ with $C^\infty_b(\Omega^0,\mathbb{R})$ coefficients that are tangent to $\Delta=\{y:y_0=y_n=0\}$.  

(b) Denote by $\Lambda(\Omega^0)$ the space of functions $a\in C^\infty(\Omega^0\cap \{y_n>0\})$ such that $a\in L^\infty(\Omega^0)$ and $M_1M_2\cdots M_p a\in L^\infty(\Omega^0)$ for all finite sequences $M_1,M_2,...$ of vector fields in $\mathcal{M}_0$.

\end{defn}

Observe that the functions $\chi_j$ defined above satisfy
\begin{align}\label{reg2}
\chi_j\in \Lambda(\Omega^0).
\end{align}

%\subsection{Final choice of  $\Omega^0$.}\label{temp} 
%The bounded open set $\Omega^0\ni 0$ was chosen in section \ref{O0}.   

%\begin{rem}
%The fact that we can arrange these things to hold globally is not an \emph{assumption} on the phases. It is a \emph{consequence} of the other assumptions and the \emph{smallness} of $\Omega^0$.  
%  We will rigorously construct  pulses on $\Omega_T^+\subset \Omega^0$ for some $T>0$ independent of $\eps$  still to be determined.
%\end{rem}

\subsubsection{The interaction term $W^\eps(y,\theta_0,\xi_n)$.}\label{multicorrector}

We now construct $W(y,\theta_0,\xi_n)$ and $W^\eps(y,\theta_0,\xi_n)=\chi^\eps(y_0,y_n)W$, where the cutoff $\chi^\eps$ is chosen below.   We use these profiles to define the  piece of the corrector denoted by $\mathcal{W}^\eps$ in \eqref{i8}  as follows:
\begin{align}\label{d00}
\mathcal{W}^\eps(y):=W^\eps(y,\theta_0,\xi_n)|_{\theta_0=\frac{y_0}{\eps},\xi_n=\frac{y_n}{\eps}}.
\end{align}

Recall from \eqref{c4}(b) that the profile $W$ is constructed to satisfy a problem of the form
\begin{align}
\mathcal{L}_2(y',0,d\psi_0,\partial_{\theta_0,\xi_n})W=-\mathcal{H}_2.
\end{align}
%where the remainder $r_w$ is small in a sense to be made precise.   
In order to define $\mathcal{H}_2$ we first expand
\begin{align}\label{d0}
\begin{split}
&\psi_k(y)=\psi_k(y',0)+\partial_n\psi_k(y',0)y_n+r_{\psi_k}(y)=y_0+\omega_k(y')y_n+r_{\psi_k}(y),
\end{split}
\end{align}
where $\omega_k(y'):=\partial_n\psi_k(y',0)$ and $r_{\psi_k}(y)=O(y_n^2)$ near $y=0$.    Using $H_2(y,\theta)=H_2(y,\theta_1,\dots,\theta_N)$ we set 
\begin{align}\label{d1}
\mathcal{H}_2(y,\theta_0,\xi_n):=H_2(y,\theta_0+\omega_1(y')\xi_n,\dots,\theta_0+\omega_N(y')\xi_n).
\end{align}

\begin{rem}
\textup{Although  $$H_2(y,\theta)|_{\theta=\frac{\psi}{\eps}}\neq \mathcal{H}_2(y,\theta_0,\xi_n)|_{\theta_0=\frac{\psi_0}{\eps},\xi_n=\frac{y_n}{\eps}}$$
for all $y$, \eqref{d0} implies that equality holds at $y_n=0$.}
\end{rem}

To see why $\mathcal{H}_2$ might be a useful substitute for $H_2$,  we first write out $H_2$.  For $\mathcal{F}$ as in \eqref{c3y}
we have $H_2(y,\theta)=(1-S)\mathcal{F}$ from \eqref{c3z}.   Thus, the parts of $\mathcal{F}$ that are linear in $(\sigma,\sigma_y)$ or quadratic in a single $\sigma_k$ cancel out in $(1-S)\mathcal{F}$.  Explicitly, we compute (with $f^{q,k}(y,\sigma_k)$ as in \eqref{c16z} and using \eqref{c1}):
\begin{align}\label{d2}
\begin{split}
&H_2=(1-S)\mathcal{F}=L(y,\partial_y)U_0-f(y,U_0)-\left[L(y,\partial_y)U_0-\left(f^l(y,\sigma)+\sum_kf^{q,k}(y,\sigma_k)\right)\right]=\\
&\qquad -\left(f^q(y,\sigma)-\sum_kf^{q,k}(y,\sigma_k)\right)=-\sum_k\left(\sum_{m<p}f^q_{kmp}(y)\sigma_m\sigma_p\right)r_k.
\end{split}
\end{align}

\begin{rem}\label{d2a}

 \textup{We can write $H_2(y,\theta)=\sum_k H_{2,k}(y,\theta)r_k(y)$, where each $H_{2,k}(y,\theta)$ is a finite sum of terms of the form
\begin{align}\label{d2z}
\sigma_m(y,\theta_m) \sigma_p(y,\theta_p) c(y),\;m\neq p
\end{align}
with $c(y)$ a smooth real-valued function that varies from term to term.  Thus, $\mathcal{H}_2(y,\theta_0,\xi_n)=\sum_k \mathcal{H}_{2,k}(y,\theta_0,\xi_n)r_k(y)$, where each $\mathcal{H}_{2,k}$ is a finite sum of terms of the form
\begin{align}\label{d2y}
\sigma_m(y,\theta_0+\omega_m(y')\xi_n) \sigma_p(y,\theta_0+\omega_p(y')\xi_n) c(y), \;m\neq p
\end{align}
with $c(y)$ a smooth real-valued function that varies from term to term.}

\end{rem}

Since $g(y',\theta_0)$ has support in $|\theta_0|\leq 1$, the equations \eqref{c12}  imply that the $\sigma_k(y,\theta_k)$ have support in $|\theta_k|\leq 1$.  We show in Proposition \ref{suppprop} that this implies products like 
$$\sigma_m(y,\frac{\psi_m}{\eps}) \sigma_p(y,\frac{\psi_p}{\eps}), \; m\neq p$$
are supported in a small neighborhood of $\Delta$.
%in $\{y:|y_n|\leq \eps^\beta\}$, where $\frac{1}{2} <\beta<1$ can be taken as close to one as we like.  %Thus, we refer to $\{y:|y_n|\leq \eps^\beta\}$ as the \emph{interaction region}.

\begin{prop}\label{suppprop}
%Let $\frac{1}{2}<\beta<1$.  
Let $\Omega^0$ be as chosen in section \ref{O0}.  There exists $M>0$ such that  the following statements hold on $\Omega^0$:

(a) In $y_n\geq M\eps$ we have
\begin{align}\label{d3}
\mathrm{supp}\;\sigma_m\left(y,\frac{\psi_m}{\eps}\right)\cap\mathrm{supp}\; \sigma_p\left(y,\frac{\psi_p}{\eps}\right)=\emptyset, \text{ when }m\neq p.
\end{align}
(b) All products $$\sigma_m\left(y,\frac{\psi_m}{\eps}\right) \sigma_p\left(y,\frac{\psi_p}{\eps}\right), \; m\neq p,$$ and thus $H_2\left(y,\frac{\psi}{\eps}\right),$
are supported in the region $I^\eps:=\{y:|y_n|\leq M\eps, |y_0|\leq M\eps\}$.
The same applies to the products $$\sigma_m\left(y,\frac{y_0+\omega_m(y')y_n}{\eps}\right)\sigma_p\left(y,\frac{y_0+\omega_p(y')y_n}{\eps}\right) , \; m\neq p$$ and to  $\mathcal{H}_2(y,\frac{y_0}{\eps},\frac{y_n}{\eps})$.

(c) In $y_n\leq M\eps$ we have for each $m$ 
\begin{align}\label{d3b}
\left|\sigma_m\left(y,\frac{\psi_m}{\eps}\right)-\sigma_m\left(y,\frac{y_0+\omega_m(y')y_n}{\eps}\right)\right|\lesssim \eps.
\end{align}

(d)  In $y_n\leq M\eps$ we have 
\begin{align}\label{d3c}
\left|H_2\left(y,\frac{\psi}{\eps}\right)-\mathcal{H}_2\left(y,\frac{y_0}{\eps},\frac{y_n}{\eps}\right)\right|\lesssim \eps.
\end{align}

%such that for all $k$ we have: $|y_0|\leq \mu\eps^\beta$ on $\mathrm{supp}\; \sigma_k\left(y,\frac{\psi_k}{\eps}\right)$ in $|y_n|\leq \eps^\beta$.
\end{prop}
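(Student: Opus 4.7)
The plan is to exploit the expansions of $\psi_k$ from Lemma \ref{relations} together with the quantitative separation of the $\gamma_k$ encoded in \eqref{phases}--\eqref{phases2} on $\Omega^0$. Throughout, I shrink $\Omega^0$ if necessary so that the error terms $O(y_0 y_n)+O(y_n^2)$ appearing in those expansions are uniformly dominated by $\delta y_n$, where $\delta$ depends only on the separation of the $\gamma_k$ and not on $\eps$.

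For part (a), any $y$ in the joint support of $\sigma_m(y,\psi_m/\eps)$ and $\sigma_p(y,\psi_p/\eps)$ satisfies $|\psi_m|\le \eps$ and $|\psi_p|\le \eps$, hence $|\psi_m-\psi_p|\le 2\eps$. By Lemma \ref{relations}(b), $\psi_m-\psi_p=-(\phi_m-\phi_p)+O(y_0 y_n)+O(y_n^2)$. The bound $|\phi_k-\gamma_k y_n|<\tfrac{\delta}{2}y_n$ from \eqref{phases2}(c) combined with $|\gamma_m-\gamma_p|>4\delta$ yields $|\phi_m-\phi_p|>3\delta y_n$, and after the above shrinking of $\Omega^0$ we get $|\psi_m-\psi_p|\ge 2\delta y_n$. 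Hence $y_n\le \eps/\delta$, and setting $M\ge 1/\delta$ proves (a). For part (b), on the joint support we then have $y_n\le M\eps$; combining $|\psi_m|\le \eps$ with Lemma \ref{relations}(b) and \eqref{phases2}(c) gives $|y_0|\le |\phi_m|+\eps+O(y_n^2)\le (|\gamma_m|+1)M\eps+\eps$, which is $\le M\eps$ after enlarging $M$. For the $\mathcal{H}_2$ analogue, the same strategy applies using \eqref{phases2}(d) in place of \eqref{phases2}(c): the resulting bound $|\omega_k+\gamma_k|<\tfrac{\delta}{2}$ gives $|\omega_m-\omega_p|>3\delta$, and $|y_0+\omega_m y_n|\le\eps$, $|y_0+\omega_p y_n|\le\eps$ force $3\delta y_n\le 2\eps$ and then $|y_0|\le M\eps$ as before.

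For part (c), combining Lemma \ref{relations}(b) and (c) yields the key expansion
$$\psi_k(y)=y_0+\omega_k(y')y_n+O(y_0 y_n)+O(y_n^2).$$
Since $\sigma_m(y,\cdot)$ is smooth with compact $\theta_m$-support, it is uniformly Lipschitz in its last argument, so
$$\bigl|\sigma_m(y,\psi_m/\eps)-\sigma_m(y,(y_0+\omega_m y_n)/\eps)\bigr|\le (C/\eps)\,|\psi_m-(y_0+\omega_m y_n)|.$$
I split on $|y_0|$: if $|y_0|>C_0\eps$ for a sufficiently large $C_0$, then in $y_n\le M\eps$ both arguments exceed $1$ in modulus (using $|\omega_m|y_n\le CM\eps$), so both values of $\sigma_m$ are zero and the difference vanishes. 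Otherwise $|y_0|\le C_0\eps$, in which case $|\psi_m-(y_0+\omega_m y_n)|\le C(|y_0|y_n+y_n^2)\le C'\eps^2$, and the Lipschitz bound gives $O(\eps)$. For part (d), Remark \ref{d2a} shows that $H_2(y,\psi/\eps)$ and $\mathcal{H}_2(y,y_0/\eps,y_n/\eps)$ are finite sums of exactly the products appearing in (c), multiplied by common smooth coefficients. Telescoping each product via $\sigma_m\sigma_p-\tilde\sigma_m\tilde\sigma_p=\sigma_m(\sigma_p-\tilde\sigma_p)+(\sigma_m-\tilde\sigma_m)\tilde\sigma_p$ (tildes denoting the surrogate arguments in $\mathcal{H}_2$) and invoking (c) together with uniform $L^\infty$ bounds on the $\sigma_k$ propagates the $O(\eps)$ bound.

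The main point of care is in part (a), where one must ensure that the higher-order error $O(y_0 y_n)+O(y_n^2)$ is strictly smaller than the leading separation $3\delta y_n$; this is achieved by shrinking $\Omega^0$ so that $|y_0|+|y_n|$ is uniformly small compared to $\delta$, which is legitimate because $\delta$ is fixed at the outset and depends only on the $\gamma_k$. Beyond this, the proof is a careful accounting based on the structural bounds \eqref{phases}--\eqref{phases2}.
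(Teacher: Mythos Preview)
Your proof is correct and follows essentially the same approach as the paper. One minor simplification worth noting: the paper uses the direct Taylor expansion $\psi_k=y_0+\omega_k(y')y_n+r_{\psi_k}(y)$ with $r_{\psi_k}=O(y_n^2)$ from \eqref{d0} (no $O(y_0y_n)$ term), which makes (c) immediate in $y_n\le M\eps$ without any case-splitting on $|y_0|$, and for (a) it works directly with $|y_0-\phi_k|\lesssim\eps$ via the factorization $\psi_k=(y_0-\phi_k)\beta_k$, so the separation $|\phi_m-\phi_p|\ge 3\delta y_n$ gives the contradiction without having to further shrink $\Omega^0$.
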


\begin{proof}
\textbf{a. } Recall that $\psi_k(y)=(y_0-\phi_k(y'',y_n))\beta_k(y)$, where $\beta_k$ is smooth and positive near $y=0$.  
Since $|\psi_k|\lesssim \eps$ on $\mathrm{supp}\;\sigma_k\left(y,\frac{\psi_k}{\eps}\right)$ for each $k$, we have 
\begin{align}\label{d4}
|y_0-\phi_k(y'',y_n)|\lesssim \eps \text{ on }\mathrm{supp}\;\sigma_k\left(y,\frac{\psi_k}{\eps}\right)\text{ for each }k. 
\end{align}
By \eqref{phases2}(c) we  can write 
\begin{align}\label{d5}
\begin{split}
%(a) \phi_k(y'',y_n)=\phi_k(y'',0)+\left(\int^1_0\partial_n\phi_k(y'',sy_n)ds\right)y_n=\left(\int^1_0\partial_n\phi_k(y'',sy_n)ds\right)y_n, \text{ where }\\
&\left(\gamma_k-\frac{\delta}{2}\right)y_n < \phi_k <\left(\gamma_k+\frac{\delta}{2}\right)y_n \text{ for all }k \Rightarrow |\phi_m-\phi_p|\geq 5\delta y_n \text{ for }m\neq p,
\end{split}
\end{align}
and this implies
\begin{align}\label{d6}
|\phi_m(y)-\phi_p(y)|\geq 5\delta M\eps\text{ for }y_n\geq M\eps.
\end{align}
If $y\in \mathrm{supp}\;\sigma_m\left(y,\frac{\psi_m}{\eps}\right)\cap\mathrm{supp}\; \sigma_p\left(y,\frac{\psi_p}{\eps}\right)$ and $y_n \geq M\eps$ for a large enough choice of $M$, then  \eqref{d6} contradicts the fact that $y$ satisfies \eqref{d4} for $m\neq p$.

\textbf{b. }From \eqref{d4} and \eqref{d5} we see that that for all $k$ we have: $|y_0|\lesssim\eps$ on $\mathrm{supp}\; \sigma_k\left(y,\frac{\psi_k}{\eps}\right)$ when $y_n\lesssim \eps$.     The  claim about the first group of products now follows from part (a).\footnote{In this proof the constant $M$ may increase from part to part.}  The claim about the second group of products follows by a similar argument.%\footnote{See p. 228, June 4, 2022.}

\textbf{c. }Using \eqref{d0} we have
\begin{align}\label{d7}
\begin{split}
&\left|\sigma_m\left(y,\frac{\psi_m}{\eps}\right)-\sigma_m\left(y,\frac{y_0+\omega_m(y')y_n}{\eps}\right)\right|=\\
&\qquad \qquad \partial_{\theta_m}\sigma_m\left(y,\frac{y_0+\omega_m(y')y_n}{\eps}\right)\frac{r_{\psi_m}(y)}{\eps}+O\left(\frac{r_{\psi_m(y)}}{\eps}\right)^2.
\end{split}
\end{align}
This implies \eqref{d3b} since $|r_{\psi_m}(y)|=O(|y_n|^2)\leq \eps^{2}$ in $y_n\lesssim \eps$.

\textbf{d. }Part (d) follows immediately from Remark  \ref{d2a} and \eqref{d3b}.

\end{proof}
%Part (d) follows from \eqref{d4} and \eqref{d5}(a).

\begin{defn}\label{d8}
%Let $\frac{1}{2}<\beta<1$.  
In view of Proposition \ref{suppprop} (b) we refer to $I^\eps:=\{y:|y_n|\leq M\eps, |y_0|\leq M\eps\}$ as the \emph{interaction region}.%\footnote{Later in the error analysis we will need to choose $\beta$ sufficiently close to $1$.} 
\end{defn}

Proposition \ref{suppprop}(d) indicates that $\mathcal{H}_2$ may be an acceptable substitute for $H_2$ in the construction of $W$. 
We can obtain a solution of 
\begin{align}\label{d8a}
\mathcal{L}_2(y',0,d\psi_0,\partial_{\theta_0,\xi_n})W=-\mathcal{H}_2, 
\end{align}
for $\mathcal{H}_2$ as in \eqref{d1} as follows.   First write\footnote{Recall \eqref{28kw} and Remark \ref{explain}.}%\footnote{Note that $\mathcal{H}^k_2\neq \mathcal{H}_{2,k}$ for $\mathcal{H}_{2,k}$ as in Remark \ref{d2a}!}
\begin{align}\label{d9}
\mathcal{H}_2(y,\theta_0,\xi_n)=\sum_k\mathcal{H}^k_{2}(y,\theta_0,\xi_n)r_k(y',0).
\end{align}
Define $W$, \emph{formally} at first,  by 
\begin{align}\label{w}
\begin{split}
&W(y,\theta_0,\xi_n)=\sum_k t_k(y,\theta_0,\xi_n)r_k(y',0), \text{ where }\\
&t_k(y,\theta_0,\xi_n)=-\int^{\xi_n}_{+\infty}\mathcal{H}^k_2\left(y,\theta_0+\omega_k(y')(\xi_n-s),s\right)ds.
\end{split}
\end{align}
Since\footnote{Here we use the fact that for all $k$ we have $d'\psi_k(y',0)=d'\psi_0(y')=(1,0)$.    Also recall \eqref{a2} and Definition \ref{pi}.} 
\begin{align}\label{d10}
\begin{split}
&\mathcal{L}_2(y',0,d\psi_0,\partial_{\theta_0,\xi_n})=\partial_{\xi_n}+\mathcal{A}(y',0,d'\psi_0)\partial_{\theta_0}\text{ and }\\&\mathcal{A}(y',0,d'\psi_0)r_k(y',0)=-\omega_k(y')r_k(y',0),
\end{split}
\end{align}
we see that for any $k$ 
\begin{align}
\mathcal{L}_2(y',0,d\psi_0,\partial_{\theta_0,\xi_n})t_k(y,\theta_0,\xi_n)r_k(y',0)=[\partial_{\xi_n}-\omega_k(y')\partial_{\theta_0}]t_k r_k(y',0).
\end{align}
Thus, $W$ as in \eqref{w} is by inspection a formal solution of \eqref{d8a}.    

It remains to examine the integral in \eqref{w}.   Although $\mathcal{H}^k_2\neq \mathcal{H}_{2,k}$ for $\mathcal{H}_{2,k}$ as in Remark \ref{d2a}, it is still true that $\mathcal{H}^k_2$ is a finite sum of terms of the form \eqref{d2y}; the only difference is that the smooth coefficients $c(y)$ change.    For a particular $k$ the contribution of a term \eqref{d2y} to the integral in \eqref{w} is 
\begin{align}\label{d10z}
c(y)\int^{\xi_n}_{+\infty}\sigma_m\left(y,\theta_0+\omega_k(y')\xi_n+s(\omega_m(y')-\omega_k(y'))\right)\sigma_p\left(y,\theta_0+\omega_k(y')\xi_n+s(\omega_p(y')-\omega_k(y'))\right) ds:=A.
\end{align}
At most one of $m,p$ can equal $k$ and the $\sigma_l$ are bounded with compact support in $\theta_l$.  Suppose $m\neq k$. Then 
\begin{align}
|A|\lesssim \int^\infty_{-\infty}|\sigma_m(y,\theta_0+\omega_k\xi_n+s(\omega_m-\omega_k))|ds=\int^\infty_{-\infty}|\sigma_m(y,t)|\frac{dt}{|\omega_m-\omega_k|}\leq K,
\end{align}
where $K$ is independent of $(y,\theta_0,\xi_n)$.   Here we used that $|\omega_m(y')-\omega_k(y')|\geq 3\delta>0$ for $y'$ near $0$.\footnote{See \eqref{phases} and \eqref{phases2}.}So $W$ is indeed a bounded solution of \eqref{d8a}.    

\begin{rem}\label{d10yy}
\textup{Similar estimates show that $t_k(y,\theta_0,\xi_n)$ is $C^\infty$ on $\Omega^0\times \mathbb{R}^2_{\theta_0,\xi_n}$.
Although $t_k$ is bounded on this set, that is not true of its derivatives.   For example, applying  $\partial_{y'}$ to the integral in \eqref{d10z} pulls out a factor of $(\partial_{y'}\omega_k) \xi_n$.   The derivatives are bounded on bounded subsets of $\Omega^0\times \mathbb{R}^2_{\theta_0,\xi_n}$ though.  That will be useful because factors like $\frac{y_n}{\eps}$ are bounded uniformly with respect to $\eps\in (0,\eps_0]$ on the interaction region $I_\eps$.}
\end{rem}

Finally, we modify $W$ to obtain $W^\eps$ as follows.
%Choose $\frac{1}{2}<\beta<\beta_+<1$, so $\eps^{\beta_+/2}>\eps^\beta$, but $\eps^{\beta+}<\eps^\beta$ for $\eps<\eps_0$ small.   Set 
\begin{align}\label{29aa}
W^\eps(y,\theta_0,\xi_n):=\chi\left(\frac{y_0}{\sqrt{\eps}}\right)\chi\left(\frac{y_n}{\sqrt{\eps}}\right)W(y,\theta_0,\xi_n):=\chi^\eps(y_0,y_n)W(y,\theta_0,\xi_n),
\end{align}
 where $\chi(s)\in C^\infty_c(\mathbb{R})$ is $1$ for $|s|\leq 1$, $0$ for $|s|\geq 2$.    This enforces $|y_0|\lesssim \sqrt{\eps}$, $|y_n|\lesssim \sqrt{\eps}$ on $\mathrm{supp}\;W^\eps$.

 Now 
\begin{align}\label{29b}
\mathcal{L}_2(y',0,d\psi_0,\partial_{\theta_0,\xi_n})W^\eps=-\chi\left(\frac{y_0}{\sqrt{\eps}}\right)\chi\left(\frac{y_n}{\sqrt{\eps}}\right)\mathcal{H}_2(y,\theta_0,\xi_n), 
\end{align}
but note that because of Proposition \ref{suppprop}(b):
\begin{align}\label{29bb}
\left[-\chi\left(\frac{y_0}{\sqrt{\eps}}\right)\chi\left(\frac{y_n}{\sqrt{\eps}}\right)\mathcal{H}_2(y,\theta_0,\xi_n)\right]|_{\theta_0=\frac{y_0}{\eps}, \xi_n=\frac{y_n}{\eps}}=-\mathcal{H}_2(y,\theta_0,\xi_n)|_{\theta_0=\frac{y_0}{\eps}, \xi_n=\frac{y_n}{\eps}}
\end{align} 
for $\eps_0$ small.   Thus, we have 
\begin{align}\label{29c}
[\mathcal{L}_2(y',0,d\psi_0,\partial_{\theta_0,\xi_n})W^\eps]|_{\theta_0=\frac{y_0}{\eps}, \xi_n=\frac{y_n}{\eps}}
=[\mathcal{L}_2(y',0,d\psi_0,\partial_{\theta_0,\xi_n})W]|_{\theta_0=\frac{y_0}{\eps}, \xi_n=\frac{y_n}{\eps}}.
\end{align}

\begin{rem}
\textup{The cutoff $\chi^\eps(y_0,y_n)$ is introduced in \eqref{29aa} for later use in the error analysis.  It is needed for the estimates of $W^\eps$ and the  part of $\mathcal{E}_0^\eps$ given by $(\mathcal{L}_2-\mathcal{L}_{2,0})W^\eps$; see \eqref{30h}(a) and the footnote there.   The cutoff has no effect on the estimate of the  part of $\mathcal{E}_0^\eps$ given by $H_2-\mathcal{H}_2$, because of \eqref{29bb}.  We will show $\chi^\eps$ is harmless in the estimate of $\eps\mathcal{E}_1^\eps$, even though we must take into account the effect of applying vector fields to it.}

\end{rem}

This completes the construction of the approximate solution $u^\eps_a$ as in \eqref{ua}.

\begin{rem}\label{d10y}
\textup{Observe that if any term in the expansion of $\mathcal{H}^k_2$ had the form $\sigma^2_k\left(y,\theta_0+\omega_k(y')\xi_n\right)c(y)$, then the contribution of that term to the integral in \eqref{w} would not be finite (set $m=p=k$ in \eqref{d10z}). 
But such terms are absent because of the cancellations in $H_2=(1-S)\mathcal{F}$.}     

\textup{Scalar factors of the form $\sigma^2_k\left(y,\theta_0+\omega_k(y')\right)c(y)$ do appear in terms of $E\mathcal{F}$ and  in $H_1$ \eqref{c16z}, but those contributions to $\mathcal{F}$ are 
solved away, respectively,  by the leading profile equations \eqref{c12} and by the choice of $V$ \eqref{c13}.   This provides a belated motivation for the decomposition of $\mathcal{F}$ given in \eqref{c3}, \eqref{c3z}; see also Remark \ref{decV}.}

\end{rem}

\subsection{Summary} 
We recall from \eqref{a9z} that $u^\eps_a(y)=U_0(y,\theta)|_{\theta=\frac{\psi}{\eps}}+\eps U^\eps_1(y)$ satisfies
\begin{align}\label{d11}
L(y,\partial_y)u^\eps_a-f(y,u^\eps_a)=\eps^{-1}\mathcal{E}^\eps_{-1}+\mathcal{E}_0^\eps+\eps\mathcal{E}_1^\eps.
\end{align}
We saw that $\mathcal{E}^\eps_{-1}=0$ by virtue of the polarization of the terms of $U_0$.  With $\mathcal{F}=L(y,\partial_y)U_0-f(y,U_0)$ we have 
\begin{align}\label{d12}
\mathcal{E}_0^\eps(y)=\left[\mathcal{F}(y,\theta)+\mathcal{L}_1(y,d\psi,\partial_\theta)V(y,\theta)+\mathcal{L}_2(y,d\psi_0,\partial_{\theta_0,\xi_n})W^\eps(y,\theta_0,\xi_n)\right]|_{\theta=\frac{\psi}{\eps},\theta_0=\frac{\psi_0}{\eps},\xi_n=\frac{y_n}{\eps}}.
\end{align}
The profile $U_0$ was constructed to make $E\mathcal{F}=0$.   We wrote $(1-E)\mathcal{F}=H_1+H_2$ and constructed $V_1$ such that $\mathcal{L}_1V_1+H_1=0$.  Denoting  $\mathcal{L}_2(y',0,d\psi_0,\partial_{\theta_0,\xi_n})$ by $\mathcal{L}_{2,0}$ we can therefore write 
\begin{align}\label{d13}
\mathcal{E}_0^\eps(y)=\left[H_2+\mathcal{L}_2W^\eps\right]|_{\theta=\frac{\psi}{\eps},\theta_0=\frac{\psi_0}{\eps},\xi_n=\frac{y_n}{\eps}}=\left[(H_2-\mathcal{H}_2)+(\mathcal{L}_2-\mathcal{L}_{2,0})W^\eps\right]|_{\theta=\frac{\psi}{\eps},\theta_0=\frac{\psi_0}{\eps},\xi_n=\frac{y_n}{\eps}}.
\end{align}
Hence on the open set  $\Omega^0\ni 0$, chosen  as in section \ref{O0}, the approximate solution $u_a$ satisfies%\footnote{One constraint on $T$ arises from the possibly short time of existence of the transport equations \eqref{c12}. Another appears in ...}
\begin{align}\label{d14}
\begin{split}
&L(y,\partial_y)u^\eps_a=f(y,u^\eps_a)+r^\eps_a\text{ in }y_n>0\\
&B(y')u^\eps_a|_{y_n=0}=b^\eps(y')+\eps B(y')U^\eps_1(y',0),\\
&u^\eps_a=0\text{ in }y_0<0,
\end{split}
\end{align}
where 
\begin{align}\label{d15}
\begin{split}
&r^\eps_a(y)=\mathcal{E}_0^\eps(y)+\eps\mathcal{E}_1^\eps(y)\text{ with }\\
&\mathcal{E}_0^\eps(y)=\left[(H_2-\mathcal{H}_2)+(\mathcal{L}_2-\mathcal{L}_{2,0})W^\eps\right]|_{\theta=\frac{\psi}{\eps},\theta_0=\frac{\psi_0}{\eps},\xi_n=\frac{y_n}{\eps}}\\
&\mathcal{E}_1^\eps(y)=\left[ -K(y,U_0,\eps \mathcal{U}^\eps_1)\mathcal{U}^\eps_1+L(y,\partial_y) V+L(y,\partial_y)W^\eps\right]|_{\theta=\frac{\psi}{\eps}, \theta_0=\frac{\psi_0}{\eps},\xi_n=\frac{y_n}{\eps}}.
\end{split}
\end{align}

\begin{rem}\label{d15z}
\textup{At this point it is clear that \emph{for each $\eps$}, $u^\eps_a\in C^\infty_b(\Omega^0)$, and that $u^\eps_a\in L^\infty(\Omega^0)$ uniformly with respect to small $\epsilon$.  Indeed, the same is true for the individual pieces
$$U_0\left(y,\frac{\psi}{\eps}\right), V\left(y,\frac{\psi}{\eps}\right), \text{ and }W^\eps\left(y,\frac{y_0}{\eps},\frac{y_n}{\eps}\right).$$
We show in section \ref{Ua} that  for any $r\in \mathbb{N}_0$, $u^\eps_a\in N^m(\Omega^0)$ uniformly with respect to small $\eps$.}
\end{rem}
%Recall that the matrix $\mathcal{A}(y,d'\psi_k(y))$ has $N$ eigenvalues $\lambda_1(y,d'\psi_k),\dots,\lambda_N(y,d'\psi_k)$, corresponding right eigenvectors $r_k(y):=y,
%R_k(y,d'\psi_k)$, and corresponding projections $\pi_k(y)=\Pi_k(y,d'\psi_k)$ as in Definition \ref{pi}.   %Observe that it is $\lambda_k(y,d'\psi_k)$ that appears on the right in the eikonal equation.

%\end{rem}

\section{Exact solution and error analysis}\label{ea}

To complete the proof of Theorem \ref{mr} it remains to estimate the exact solution $u^\eps$ to \eqref{ia1} and the difference $u^\eps-u^\eps_a$ in $L^\infty\cap N^m(U)$, where $m>\frac{n+5}{2}$ and $U$ is some neighborhood of $0$.

%It remains to estimate the exact solution of \eqref{ia1} on a neighorhood of $0$ independent of $\eps$, and to show that $u^\eps_a$ is close to $u^\eps$ in a useful sense.  

In section \ref{bpexact} we apply a result from \cite{metajm} to obtain $u^\eps\in L^\infty\cap N^m(\Omega_{T_1})$  on a certain domain of determinacy $\Omega_{T_1}\subset \Omega^0$ for that problem.\footnote{The sets $\Omega_T$   are defined in \eqref{a1z}. }
We define the error problem to be the problem satisfied by the difference $w^\eps:=u^\eps-u^\eps_a$.   We have 
\begin{align}\label{e1}
f(y,u^\eps)-f(y,u^\eps_a)=\left(\int^1_0\partial_uf(y,u^\eps_a+s(u^\eps-u^\eps_a))ds\right)w^\eps:=D(y,u^\eps,u^\eps_a)w^\eps,
\end{align}
so the error problem, a linear problem for $w^\eps$,  is
\begin{align}\label{error}
\begin{split}
&L(y,\partial_y)w^\eps=D(y,u^\eps,u^\eps_a)w^\eps-r^\eps_a\text{ in }y_n>0\\
&B(y')w^\eps|_{y_n=0}=-\eps B(y')U^\eps_1(y',0)\\
&w^\eps=0\text{ in }y_0<0.
\end{split}
\end{align}
We will solve this problem and estimate $w^\eps$ on $U:=\Omega_T\subset\Omega_{T_1}\subset \Omega^0$ for some small enough $T>0$ independent of $\eps$.  We have
\begin{align}\label{e2}
u^\eps=u^\eps_a+w^\eps=U_0(y,\theta)|_{\theta=\frac{\psi}{\eps}}+\eps U^\eps_1(y)+w^\eps,
\end{align}
where $U^\eps_1$ is bounded in $L^\infty(\Omega^0)$ uniformly for $\eps$ small by the above construction.  Thus, if we can show that $w^\eps$ is small in $L^\infty(\Omega_T)$ uniformly for $\eps$ small,  we will have shown that the exact solution is close to $U_0(y,\theta)|_{\theta=\frac{\psi}{\eps}}$ in a useful sense.   To control the $L^\infty$ norm of $w^\eps$, we need to control
the $L^\infty\cap N^m$ norms of the interior and boundary forcing terms in \eqref{error}; see Proposition \ref{37b}.
% We may then conclude that the qualitative properties exhibited by $U_0(y,\theta)|_{\theta=\frac{\psi}{\eps}}$ are also properties of the exact solution $u^\eps$.  
Thus, we need to estimate $u^\eps_a$ (section \ref{Ua}) and $r^\eps_a$ (section \ref{ra}) in the conormal spaces $N^m(\Omega^0)$.  %The final estimate of $r_a$ is given in Proposition \ref{39a}. 
The error $w^\eps$ is estimated in section \ref{weps}.

\subsection{Spaces of conormal distributions}\label{Nm}
%TODO: Say more about the history (Bony,Metivier,...)

%The estimates needed for the error analysis will be stated using norms on the following spaces of conormal distributions.  
Here we give a precise description, based on \cite{metajm}, of the conormal spaces $N^m(\Omega^0)$. 
In this section the characteristic surfaces $\Sigma_j$  are as in \eqref{28kz} and the codimension two surface $\Delta$ is $\{y:y_0=y_n=0\}$.  

\begin{defn}\label{43a}
(a) For each $j=1,\dots,N$ let $\mathcal{M}_j$ denote the space of vector fields on $\Omega^0$ with coefficients in $C^\infty_b(\Omega^0)$ that are tangent to $\Sigma_j$ and $\Delta$.     A set of generators of $\mathcal{M}_j$ over $C^\infty_b(\Omega^0)$ is given by:\footnote{The spaces $\mathcal{M}_j$, $\mathcal{M}_0$, $\mathcal{M}'$ are $C^\infty_b(\Omega^0)$ (or $C^\infty_b(b\Omega^0)$)  modules which are closed under multiplication on the left by elements of $C^\infty_b$ and closed under the Lie bracket $[V,W]=VW-WV$.}

$M_0=(y_0-\phi_j)\partial_0$

$M_\nu=\partial_\nu+(\partial_\nu\phi_j)\partial_0, \nu=1,\dots,n-1$

$M_n=y_n(\partial_n+(\partial_n\phi_j)\partial_0)$%+M^0_j.$

$M_{n+1}=(y_0-\phi_j)\partial_n$.\\

(b) Let $\mathcal{M}_0$ denote the space of vector fields on $\Omega^0$ with coefficients in $C^\infty_b(\Omega^0)$ that are tangent to $\Delta$.     A set of generators is given by 
$$y_0\partial_{0},\; y_0\partial_n,\; y_n\partial_0, \;y_n\partial_n, \;\partial_l, l=1\dots,n-1.$$

(c) Let $\mathcal{M}'$  denote the space of vector fields on $b\Omega^0$ with coefficients in $C^\infty_b(b\Omega^0)$ that are tangent to $\Delta$.  A set of   generators of $\mathcal{M}'$ is given by
$$m_0=y_0\partial_0, m_\nu=\partial_\nu,\; \nu=1,\dots,n-1.$$

\end{defn}

\begin{defn}[Conormal spaces]\label{43b}
Let $U\subset \Omega^0$ be some neighborhood of $0$.  

(a) For $m\in\mathbb{N}_0$ let $N^m(U,\mathcal{M}_j)$ denote the set of $u\in L^2(U)$ such that 
$V_1V_2\cdots V_k u\in L^2(U)$ for any choice of $V_i\in \mathcal{M}_j$ and $k\leq m$. 
To define a norm on $N^m(U,\mathcal{M}_j)$ let $M_0,\dots,M_{n+1}$ denote the generators of $\mathcal{M}_j$ given in Definition \ref{43a}.   Then set
\begin{align}\label{43c}
|u|^2_{N^m(U,\mathcal{M}_j)}=\sum_{|\alpha|\leq m}|(M_0,\dots,M_{n+1})^\alpha u|^2_{L^2(U)}.
\end{align}
%A set of generators of $\mathcal{M}_j$ is listed in step \text{2} of the proof of Proposition \ref{32b}. 

(b) The spaces $N^m(U,\mathcal{M}_0)$ and $N^m(bU,\mathcal{M}')$ and their norms $|u|_{N^m(U,\mathcal{M}_0)}$ and $\langle u\rangle_{N^m(bU,\mathcal{M}')}$ are defined analogously using the generators of $\mathcal{M}_0$ and $\mathcal{M}'$ given in Definition \ref{43a}.  Usually, we will write simply $\langle u\rangle_{N^m(bU)}$ in place of $\langle u\rangle_{N^m(bU,\mathcal{M}')}$.

% Generators for $\mathcal{M}'$ are listed in step \textbf{2} of the proof of Proposition \ref{32b}.   %Generators for $\mathcal{M}_0$ are listed in \eqref{gen}. 

(c) Let $N^m(U)$ be the set of $u\in L^2(U)$ such that $u=\sum^N_{j=1}u_j$ for some choice of  $u_j\in N^m(U,\mathcal{M}_j)$. 
Define
\begin{align}\label{43d}
|u|_{N^m(U)}=\inf \sum^N_{j=1}|u_j|_{N^m(U,\mathcal{M}_j)},
\end{align}
where the inf is taken over all decompositions $u=\sum^N_{j=1}u_j$ as above.

(d) For $m\in\mathbb{N}_0$ and $j\in \{0,1,\dots,N\}$ let $N^m_\infty(U,\mathcal{M}_j)$ denote the set of $u\in L^\infty(U)$ such that 
$V_1V_2\cdots V_k u\in L^\infty(U)$ for any choice of $V_i\in \mathcal{M}_j$ and $k\leq m$. 
%If $M_1,\dots,M_{\nu}$ denote a particular set of vector fields that generate $\mathcal{M}_j$ over $C^\infty_b$, set
With generators $M_0,\dots,M_{n+1}$ as in (a), define
\begin{align}\label{43h}
|u|^2_{N^m_\infty(U,\mathcal{M}_j)}=\sum_{|\alpha|\leq m}|(M_0,\dots,M_{n+1})^\alpha u|^2_{L^\infty(U)}.
\end{align}
The space $N^m_\infty(bU,\mathcal{M}')$ and its norm $\langle u\rangle_{N^m_\infty(bU,\mathcal{M}')}$ (or simply $\langle u\rangle_{N^m_\infty(bU)}$) are defined similarly.   

(e) If $u\in L^\infty\cap N^m(U,\mathcal{M}_j)$, we define $$|u|_{L^\infty\cap N^m(U,\mathcal{M}_j)}=|u|_{L^\infty(U)}+|u|_{N^m(U,\mathcal{M}_j)},$$
and do similarly for the other $N^m$ norms.

\end{defn}

The set $\Omega^0$ is bounded, so for $U\subset \Omega^0$ we have the obvious estimate
\begin{align}\label{44a}
|u|_{N^m(U,\mathcal{M}_j)}\leq \sqrt{|\Omega^0|}\;|u|_{N^m_\infty(U,\mathcal{M}_j)}, 
\end{align}
where $|\Omega^0|$ is the Lebesgue measure of $\Omega^0$.   Later we often use \eqref{44a} to estimate $N^m(U,\mathcal{M}_j)$ norms.

We will need to use the following lemma, which is stated in \cite{metajm} and proved in \cite{metduke}.  The lemma uses the partition of unity 
$\sum^N_{j=0}\chi_j(y)$ on $\{y_n>0\}$,  where the $\chi_j$ are defined just below \eqref{partition}.

\begin{lem}\label{43e}
Let $u\in L^2(U)$.  Then $u\in N^m(U)$ if and only if $\chi_ju\in N^m(U,\mathcal{M}_j)$ for $j=0,\dots,N$.  Moreover, 
the norm $|u|_{N^m(U)}$ \eqref{43d} is equivalent to the norm $\sum^N_{j=0}|\chi_j u|_{N^m(U,\mathcal{M}_j)}$ with constants independent of $U$.
\end{lem}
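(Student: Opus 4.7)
The plan is to prove the norm equivalence in both directions by localization with the partition $\{\chi_j\}_{j=0}^N$, exploiting the wedge estimates \eqref{phases} and \eqref{phases2} to decouple the characteristic singularities of the individual pieces $u_k$. The set-theoretic equivalence follows at once from the norm equivalence.

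For the \textbf{easy direction} ($|u|_{N^m(U)}\lesssim\sum_j|\chi_j u|_{N^m(U,\mathcal{M}_j)}$), every generator of $\mathcal{M}_j$ can be rewritten as a $C^\infty_b(\Omega^0)$-linear combination of $\mathcal{M}_0$-generators; for example, $(y_0-\phi_j)\partial_0=y_0\partial_0-(\phi_j/y_n)\,y_n\partial_0$, with $\phi_j/y_n\in C^\infty_b(\Omega^0)$ because $\phi_j(y'',0)=0$, and similarly for the remaining generators. This yields continuous embeddings $N^m(U,\mathcal{M}_0)\hookrightarrow N^m(U,\mathcal{M}_j)$ with constants independent of $U$. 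Writing
\begin{align*}
u=(\chi_0 u+\chi_1 u)+\sum_{j=2}^N\chi_j u
\end{align*}
then exhibits $u$ as an element of $\sum_{j=1}^N N^m(U,\mathcal{M}_j)$ with norm dominated by $\sum_{j=0}^N|\chi_j u|_{N^m(U,\mathcal{M}_j)}$.

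The \textbf{hard direction} requires, for any admissible decomposition $u=\sum_{k=1}^N u_k$ with $u_k\in N^m(U,\mathcal{M}_k)$, uniform control of each $\chi_j u_k$ in $N^m(U,\mathcal{M}_j)$. Diagonal terms $\chi_j u_j$ (for $j\geq 1$) are handled by the Leibniz rule and the fact that $\chi_j\in\Lambda(\Omega^0)$ is a multiplier on $N^m(U,\mathcal{M}_j)$ (since $\mathcal{M}_j\subset\mathcal{M}_0$ makes iterated $\mathcal{M}_j$-derivatives of $\chi_j$ bounded). For the off-diagonal terms $\chi_j u_k$ with $k\neq j$, as well as for $\chi_0 u_k$, the key geometric step is the lower bound $|y_0-\phi_k(y'',y_n)|\geq c\,y_n$ on $\mathrm{supp}\,\chi_j$, which holds because $y_0/y_n$ lies either near $\gamma_j$ (when $j\geq 1$) or uniformly away from every $\gamma_m$ (when $j=0$), combined with $|\phi_k-\gamma_k y_n|<(\delta/2)y_n$ from \eqref{phases2}(c) and the spectral gap in \eqref{phases}(a). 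On $\mathrm{supp}\,\chi_j$ this identity allows each generator of $\mathcal{M}_j$ or $\mathcal{M}_0$ to be re-expressed as a $\Lambda(\Omega^0)$-multiplier times a generator of $\mathcal{M}_k$; the prototype is
\begin{align*}
(y_0-\phi_j)\,\partial_0=\frac{y_0-\phi_j}{y_0-\phi_k}\,(y_0-\phi_k)\,\partial_0,
\end{align*}
where the coefficient, cut off by $\chi_j$ and extended by zero, belongs to $\Lambda(\Omega^0)$: iterated $\partial_0,\partial_n$ differentiations produce denominators $(y_0-\phi_k)^p$ that are offset by the factors $y_0,y_n$ appearing in every $\mathcal{M}_0$-generator together with the lower bound $|y_0-\phi_k|\geq c y_n$.

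\textbf{Completion and main obstacle.} Iterating the Leibniz rule and the above identities, any composition $V_1\cdots V_p(\chi_j u_k)$ with $V_i\in\mathcal{M}_j$ and $p\leq m$ expands into a finite sum of terms of the form $a\cdot W_1\cdots W_q u_k$ with $W_i\in\mathcal{M}_k$, $q\leq p$, and $a$ bounded on $\mathrm{supp}\,\chi_j$ together with all its iterated $\mathcal{M}_0$-derivatives. Each such term lies in $L^2(U)$ by the hypothesis on $u_k$, yielding $\chi_j u_k\in N^m(U,\mathcal{M}_j)$ with a uniform norm bound. Summing over $k$ and taking the infimum over decompositions of $u$ closes the reverse inequality. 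I expect the principal obstacle to be the combinatorial bookkeeping in this iteration: one must verify that the multipliers produced by repeated quotient-rule differentiations of $(y_0-\phi_j)/(y_0-\phi_k)$-type expressions remain in $\Lambda(\Omega^0)$ after localization by $\chi_j$, which reduces to a uniform pointwise estimate on each wedge $W_j$ and is precisely where the quantitative choice of $\delta$ in \eqref{phases} and the bound \eqref{phases2}(c) are indispensable.
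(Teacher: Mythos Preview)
The paper does not prove this lemma: it is quoted from \cite{metajm} (``stated in \cite{metajm} and proved in \cite{metduke}''), so there is no in-paper proof to compare against. Your sketch is a faithful outline of the standard argument carried out in those references.

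Your easy direction is correct: the identity $(y_0-\phi_j)\partial_0=y_0\partial_0-(\phi_j/y_n)\,y_n\partial_0$ (and its analogues) shows $\mathcal{M}_j\subset C^\infty_b(\Omega^0)\cdot\mathcal{M}_0$, hence $N^m(U,\mathcal{M}_0)\hookrightarrow N^m(U,\mathcal{M}_j)$, and the displayed decomposition of $u$ then gives the bound. For the hard direction you have correctly isolated the crucial geometric input, namely that on $\mathrm{supp}\,\chi_j$ with $j\neq k$ one has $|y_0-\phi_k|\geq c\,y_n$ (from \eqref{partition} and \eqref{phases2}(c)), which is exactly what makes quotients like $(y_0-\phi_j)/(y_0-\phi_k)$ and $y_n/(y_0-\phi_k)$ bounded there. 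The iteration you describe does close: applying an $\mathcal{M}_j$-generator to such a quotient returns an expression of the same type (e.g.\ $(y_0-\phi_j)\partial_0\bigl[(y_0-\phi_j)/(y_0-\phi_k)\bigr]=(y_0-\phi_j)(\phi_j-\phi_k)/(y_0-\phi_k)^2$, and $|\phi_j-\phi_k|\lesssim y_n\lesssim|y_0-\phi_k|$ on $\mathrm{supp}\,\chi_j$), so the multipliers produced remain in $\Lambda(\Omega^0)$ after localization by $\chi_j$. The ``combinatorial bookkeeping'' you flag as the main obstacle is genuine but routine once this closure property is observed; it is carried out in detail in \cite{metduke}.
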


\subsection{Exact solution $u^\eps$}\label{bpexact}

In this section we apply a result of \cite{metajm} to obtain an exact solution of \eqref{ia1} on an appropriate domain of determinacy  for that problem.  
%We'll sometimes need to work on  suitable neighborhoods of the origin that we now define.   Set\footnote{Draw $\Omega$ and $\Omega_T$ when $n=2$ or $n=1$, say for  $T_0=1$ and $\alpha<1$.   In AJM the letter $\eps$ is used in place of $\alpha$ in the definition of $\Omega$.   We are using $\eps$ for something else!}
First, for positive constants $T_0$, $\alpha$, and $T<T_0$  we define
\begin{align}\label{a1z}
\begin{split}
&\Omega=\{y: -T_0<y_0<T_0-\alpha |y'',y_n|\}\cap\{y_n\geq 0\}\text{ and  }b\Omega=\{y':(y',0)\in \Omega\cap\{y_n=0\}\}\\
&\Omega_T=\Omega\cap \{y_0<T\}\text{ and }b\Omega_T=\{y':(y',0)\in \Omega_T\cap\{y_n=0\}\}.
\end{split}
\end{align}
The constants $T_0$ and $\alpha$ are chosen so that $\Omega$, and thus $\Omega_T$,  is a domain of determinacy for 
the problem \eqref{ia1}.   We also require $\Omega\subset \Omega^0$ for $\Omega^0$ as chosen in section \ref{O0}. 

Using \eqref{44a} it is easy to check that for each $m\in \mathbb{N}_0$, the boundary datum in \eqref{ia1} satisfies
\begin{align}\label{ex1}
\langle b^\eps\rangle_{L^\infty\cap N^m(b\Omega)}\lesssim 1.
\end{align}
%In the notation of AJM we have  $G^\eps(t,y'')\in L^\infty\cap N^m(b\Omega)$ with respect to $\Delta=\{t=x_n=0\}$ uniformly for $\eps$ small.   
The next proposition is just a rephrasing of Theorem 2.1.1 of \cite{metajm} in a form suitable for the problem \eqref{ia1}. 
\begin{prop}[Exact solution]\label{ex2}
Suppose $m>\frac{n+5}{2}$.    Then \eqref{ex1} implies  that 
if $T_0$ and $\alpha$ in \eqref{a1z} are small enough (depending on $({L},B)$),  the problem \eqref{ia1} has an exact solution $u^\eps$ in $L^\infty\cap N^m(\Omega_{T_1})$ for some $0<T_1<T_0$ that satisfies
\begin{align}\label{ex3}
|u^\eps|_{L^\infty\cap N^m(\Omega_{T_1})}\lesssim 1.
\end{align}

\end{prop}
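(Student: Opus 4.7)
The plan is to recognize that Proposition \ref{ex2} is a direct translation of Theorem 2.1.1 of \cite{metajm}, so the task reduces to (i) verifying that the model problem \eqref{ia1} fits the framework of that theorem and (ii) extracting uniformity of the existence time $T_1$ with respect to $\eps$. Métivier's theorem applies to semilinear, strictly hyperbolic, first-order $N\times N$ systems on a half-space with noncharacteristic boundary, satisfying the uniform Lopatinski condition, with boundary data in $L^\infty\cap N^m(b\Omega)$ for $m>\frac{n+5}{2}$ and a smooth interior nonlinearity vanishing at $u=0$; it produces a solution in $L^\infty\cap N^m$ on a domain of determinacy $\Omega_{T_1}$ whose shape (governed by $T_0,\alpha$) is controlled by the propagation cones of $\mathcal{L}$.

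First I would check that each hypothesis of the cited theorem matches our setup: strict hyperbolicity is Assumption \ref{sh}; noncharacteristicity of $\{y_n=0\}$, which is the invertibility of $B_0=A_n^{-1}$, is Assumption \ref{nonch}; uniform Lopatinski is Assumption \ref{ul}; the smoothness of $f$ with $f(y,0)=0$ and the vanishing of $u^\eps$ in $y_0<0$ are built into \eqref{ia1}. Corner compatibility at $y_0=0$ on $\{y_n=0\}$ is automatic because $g(y',\theta_0)$ is supported in $\theta_0\geq 0$, so $b^\eps$ vanishes for $y_0<0$ together with all its $\mathcal{M}'$-derivatives, matching the zero past. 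The threshold $m>\frac{n+5}{2}$ is precisely the threshold at which $L^\infty\cap N^m$ becomes a Moser-type algebra (Proposition \ref{35a}), so $f(y,u^\eps)$ can be estimated in this space. The choice of $T_0$ and $\alpha$ small is forced by the requirement that $\Omega$ be a domain of determinacy for \eqref{ia1}, which depends only on the maximum propagation speed of $\mathcal{L}$ near $y=0$, hence on $(L,B)$ alone.

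The only point that truly needs attention, and the one I would identify as the main (mild) obstacle, is the extraction of $\eps$-uniformity. Métivier's theorem delivers a time $T_1>0$ and a bound on the solution that are nondecreasing functions of the boundary-data norm $\langle b^\eps\rangle_{L^\infty\cap N^m(b\Omega)}$, with all other constants depending only on the $\eps$-independent quantities $(L,B,f,\Omega)$. Since hypothesis \eqref{ex1} gives a \emph{uniform} bound on $\langle b^\eps\rangle_{L^\infty\cap N^m(b\Omega)}$ as $\eps\to 0$, one can select a single $T_1\in(0,T_0)$ that works for every sufficiently small $\eps>0$, and the resulting $L^\infty\cap N^m(\Omega_{T_1})$ bound on $u^\eps$ holds with an $\eps$-independent constant, yielding \eqref{ex3}. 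In principle this requires tracking the explicit dependence of constants in the proof of Theorem 2.1.1 of \cite{metajm}, but since that proof is based on a standard Picard iteration for a quasilinearized system in a Banach algebra, the needed monotone dependence on the data norm is transparent from the iteration scheme.
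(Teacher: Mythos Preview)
Your proposal is correct and matches the paper's approach exactly: the paper states that Proposition \ref{ex2} is ``just a rephrasing of Theorem 2.1.1 of \cite{metajm} in a form suitable for the problem \eqref{ia1}'' and gives no further argument. Your additional care in checking the hypotheses and extracting the $\eps$-uniformity of $T_1$ from \eqref{ex1} makes explicit what the paper leaves implicit.
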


%Provided $T_0$ is chosen small enough (depending on $({L},B)$), by
% Theorem 2.1.1 of AJM the exact solution of \eqref{ia1} satisfies 
%$u^\eps\in L^\infty\cap N^m(\Omega_{T_1})$ for some $0<T_1<T_0$.  More precisely, the claim is  that direct application of  the theorem gives  $K>0$ and solutions $u^\eps$ such that 
%\begin{align}
%|u^\eps|_{L^\infty\cap N^m(\Omega_{T_1})}\lesssim  \text{ for all }\eps\in (0,1].
%\end{align}

\subsection{Estimate of the approximate solution $u^\eps_a$.}\label{Ua}

%We have $u^\eps_a\in C^\infty(\Omega_T)$, where $\Omega_T \subset \Omega^0$.   
At the moment we know $u^\eps_a$ has the properties described in Remark \ref{d15z}.   We will show:
\begin{prop}\label{32a}
For all $r\in \mathbb{N}_0:=\{0,1,2,3,\dots\}$ we have $|u^\eps_a(y)|_{N^r(\Omega^0)}\lesssim 1.$  Moreover, 
\begin{align}
\begin{split}
&\left|U_0\left(y,\frac{\psi}{\eps}\right)\right|_{N^r(\Omega^0)}\lesssim 1,\;\left|V\left(y,\frac{\psi}{\eps}\right)\right|_{N^r(\Omega^0)}\lesssim 1,\text{ and }
\left|W^\eps\left(y,\frac{y_0}{\eps},\frac{y_n}{\eps}\right)\right|_{N^r(\Omega^0)}\lesssim 1,\\
&\text{ and }\left\langle V\left(y',0,\frac{y_0}{\eps},\dots,\frac{y_0}{\eps}\right)\right\rangle_{N^r(b\Omega^0)}\lesssim 1,\text{ and }
\left\langle W^\eps\left(y',0,\frac{y_0}{\eps},0\right)\right\rangle|_{N^r(b\Omega^0)}\lesssim 1.
\end{split}
\end{align}
\end{prop}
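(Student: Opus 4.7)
The plan is to estimate $U_0(y,\psi/\eps)$, $V(y,\psi/\eps)$, and $W^\eps(y,y_0/\eps,y_n/\eps)$ separately, placing each summand in the appropriate module $N^r(\Omega^0,\mathcal M_j)\subset N^r(\Omega^0)$. The key algebraic input is: if $M\in\mathcal M_k$ is tangent to $\Sigma_k=\{\psi_k=0\}$ and to $\Delta$, then $(M\psi_k)|_{\Sigma_k}=0$, so Hadamard's lemma gives $M\psi_k=\psi_k\,\tilde a(y)$ with $\tilde a\in C^\infty_b(\Omega^0)$. Thus for any smooth $\sigma(y,\theta)$,
$$M\bigl[\sigma(y,\psi_k/\eps)\bigr]=(M_y\sigma)(y,\psi_k/\eps)+\tilde a(y)\,(\theta\,\partial_\theta\sigma)(y,\theta)\big|_{\theta=\psi_k/\eps},$$
no factor of $\eps^{-1}$ survives. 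Iterating, an $r$-fold composition of $\mathcal M_k$ vector fields applied to $\sigma(y,\psi_k/\eps)$ is a finite sum of terms $b_\beta(y)(\theta^{j_\beta}\partial_\theta^{l_\beta}\sigma)(y,\theta)|_{\theta=\psi_k/\eps}$ with $b_\beta\in C^\infty_b(\Omega^0)$ and $l_\beta\le j_\beta\le r$.

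For $U_0=\sum_k\sigma_k(y,\psi_k/\eps)r_k(y)$, each $\sigma_k$ has compact support in $\theta_k$, so $\theta_k^j\partial_{\theta_k}^l\sigma_k$ is uniformly bounded in $L^\infty$, yielding uniform $L^\infty$ control on all $r$-th order $\mathcal M_k$-derivatives of the $k$-th summand; \eqref{44a} then gives $\sigma_k(y,\psi_k/\eps)r_k(y)\in N^r(\Omega^0,\mathcal M_k)$ uniformly. For $V=\sum_k V_k(y,\psi_k/\eps)$ the same argument works: $V_k$ itself is bounded (Remark~\ref{decV}) and $\partial_{\theta_k}V_k=-Q_kH_{1k}$ is compactly supported in $\theta_k$, so $\theta_k^j\partial_{\theta_k}^jV_k$ is bounded for every $j\ge 1$. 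The boundary estimate $\langle V(y',0,y_0/\eps,\dots,y_0/\eps)\rangle_{N^r(b\Omega^0)}\lesssim 1$ is identical using generators of $\mathcal M'$ and $\psi_k|_{y_n=0}=y_0$.

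The substantive case is $W^\eps=\chi^\eps\sum_k t_k(y,y_0/\eps,y_n/\eps)r_k(y',0)$, for which I will show each summand $w_k^\eps:=\chi^\eps\,t_k(y,y_0/\eps,y_n/\eps)r_k(y',0)$ lies in $N^r(\Omega^0,\mathcal M_k)$. Introduce $\eta=\theta_0+\omega_k(y')\xi_n$, $\mu=\xi_n$, and set $T_k(y,\eta,\mu):=t_k(y,\eta-\omega_k(y')\mu,\mu)$. Substituting into \eqref{w} gives
$$T_k(y,\eta,\mu)=-\int_{+\infty}^{\mu}\mathcal H_2^k\bigl(y,\eta-\omega_k(y')s,\,s\bigr)\,ds,$$
whose integrand, by the support analysis of Proposition~\ref{suppprop}, is compactly supported in $(\eta,s)$ uniformly in $y$. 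This yields: (i) $T_k$ and all its $(y,\eta,\mu)$-derivatives are bounded on bounded subsets of $\Omega^0\times\mathbb R^2_{\eta,\mu}$; (ii) $\mathrm{supp}\,\partial_\eta^jT_k\subset\{|\eta|\le C_0\}$ for $j\ge 0$; and (iii) $\partial_\mu^l\partial_\eta^jT_k$ is compactly supported in $(\eta,\mu)$ for $l\ge 1$. By Lemma~\ref{relations}, $\psi_k(y)=y_0+\omega_k(y')y_n+R_k(y)$ with $R_k=O(y_0y_n+y_n^2)$; on $\mathrm{supp}\,\chi^\eps$, where $|y_0|,|y_n|\lesssim\sqrt\eps$, we have $R_k/\eps=O(1)$, so
$$t_k(y,y_0/\eps,y_n/\eps)=T_k\bigl(y,\;\psi_k/\eps-R_k/\eps,\;y_n/\eps\bigr)\quad\text{on }\mathrm{supp}\,\chi^\eps.$$

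For $M\in\mathcal M_k$, the chain rule combined with $M\psi_k=\tilde a\,\psi_k$ shows that applying $M$ to this composition produces a finite sum of terms each of which is a bounded function of $y$ times a monomial $(\psi_k/\eps)^i(y_n/\eps)^j$ times $(\partial_\eta^i\partial_\mu^j\partial_y^lT_k)$ evaluated at the substitution: on $\mathrm{supp}\,\partial_\eta^iT_k\subset\{|\eta|\le C_0\}$ the factor $\psi_k/\eps$ is bounded since $|\psi_k/\eps|=|\eta+R_k/\eps|\le C_0+C$, and on $\mathrm{supp}\,\partial_\mu^jT_k$ for $j\ge 1$ the factor $y_n/\eps=\mu$ is bounded by (iii); factors from $M\chi^\eps$ are bounded by the definition of $\chi^\eps$. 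All such terms are therefore uniformly bounded in $L^\infty$ on $\mathrm{supp}\,w^\eps_k$, and \eqref{44a} then yields $w^\eps_k\in N^r(\Omega^0,\mathcal M_k)$ uniformly in $\eps$. The boundary trace $\langle W^\eps(y',0,y_0/\eps,0)\rangle_{N^r(b\Omega^0)}\lesssim 1$ reduces to the single-phase setting of paragraph~2 because $T_k(y,\eta,0)$ is compactly supported in $\eta$. The main obstacle is formulating and verifying by induction the precise invariant that every iterated $\mathcal M_k$-derivative of $w^\eps_k$ decomposes in the form above, with special care for the generator $M_{n+1}=(y_0-\phi_k)\partial_n$: the coefficient $M_{n+1}(y_n)/\eps=(y_0-\phi_k)/\eps$ is bounded only after using $y_0-\phi_k=\psi_k/\beta_k$ and restricting to the support of the accompanying $\partial_\mu T_k$.
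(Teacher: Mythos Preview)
Your treatment of $U_0$ and $V$ via the identity $M\psi_k=\tilde a\,\psi_k$ for $M\in\mathcal M_k$ is essentially the paper's argument (Propositions~\ref{32b} and~\ref{32h}), phrased a bit more cleanly: the paper tracks coefficients in the class $\Gamma^\eps(\Omega^0)$ of functions bounded on $\{|y_0-\phi_k|\lesssim\eps\}$, which amounts to the same thing once one writes $\psi_k=(y_0-\phi_k)\beta_k$.

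For $W^\eps$ you take a genuinely different route. The paper (Propositions~\ref{28r} and~\ref{30a}) introduces the wedge partition $\{\chi_j\}$, estimates $\chi_j T^\eps_{k,m,p}$ for $j\neq k$ in $N^r(\Omega^0,\mathcal M_0)$ using its support in $I_\eps$, and handles $\chi_k T^\eps_{k,m,p}$ in $N^r(\Omega^0,\mathcal M_k)$ by differentiating the integral representation directly, with coefficients in the stable class $\Lambda^{\sqrt{\eps}}(\Omega^0)$ (Definition~\ref{30d}). Your change of variables $(\theta_0,\xi_n)\to(\eta,\mu)=(\theta_0+\omega_k\xi_n,\xi_n)$ is a nice shortcut that eliminates the partition: since $T_k$ itself is supported in $\{|\eta|\le C_0\}$ (this is Lemma~\ref{28g}), every occurrence of $\psi_k/\eps$ is automatically bounded on the support of whatever $T_k$-derivative it multiplies, and the compact $\mu$-support of $\partial_\mu^{j\ge 1}T_k$ controls powers of $y_n/\eps$.

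Your stated invariant is, however, imprecise on two points. First, $M(\psi_{k,0}/\eps)$ is not a ``bounded function of $y$'' times a pure monomial: for example $M_\nu(\psi_{k,0}/\eps)=O(y_0y_n+y_n^2)/\eps$ (compare \eqref{30h}(a)), which depends on $\eps$ and is bounded only on $I_{\sqrt\eps}$; the correct coefficient class is exactly the paper's $\Lambda^{\sqrt{\eps}}(\Omega^0)$. Second, the exponents need not match as you claim: $M_{n+1}\mu=(y_0-\phi_k)/\eps$ produces a $\psi_k/\eps$-type factor accompanying $\partial_\mu$, not $\partial_\eta$. What \emph{does} close under induction is the weaker invariant: each term has the form $B^\eps\cdot(y_n/\eps)^b\cdot\partial_\eta^i\partial_\mu^j\partial_y^lT_k$ with $B^\eps\in\Lambda^{\sqrt{\eps}}(\Omega^0)$ (absorbing all $\psi_k/\eps$ and $(y_0y_n+y_n^2)/\eps$-type factors, both of which lie in $\Lambda^{\sqrt{\eps}}$ by Remark~\ref{30e}) and $b\le j$. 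Then $b\ge 1$ forces $j\ge 1$, and property (iii) bounds $(y_n/\eps)^b$. With this correction your argument goes through, and it is arguably tidier than the paper's wedge decomposition.
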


The proof is contained in Propositions \ref{32b}, \ref{32h}, and \ref{33cc} below.\footnote{The proofs show that in each case the $N^r$ norm can be replaced by an $N^r_\infty$ norm.}
\begin{prop}\label{32b}
For all $r\in \mathbb{N}_0:=\{0,1,2,3,\dots\}$ we have
\begin{align}
\left|U_0\left(y,\frac{\psi}{\eps}\right)\right|_{N^r(\Omega^0)}\lesssim 1.
\end{align}
\end{prop}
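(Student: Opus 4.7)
The plan is to use the natural decomposition
\begin{align*}
U_0\left(y,\tfrac{\psi}{\eps}\right) = \sum_{k=1}^N u_{0,k}, \qquad u_{0,k}(y) := \sigma_k\!\left(y,\tfrac{\psi_k(y)}{\eps}\right) r_k(y),
\end{align*}
and show that each piece satisfies $|u_{0,k}|_{N^r_\infty(\Omega^0,\mathcal{M}_k)} \lesssim 1$; the bound \eqref{44a} upgrades this to $N^r$, and Definition \ref{43b}(c) then gives the desired estimate on $U_0$.

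The central algebraic observation is that for every generator $M$ of $\mathcal{M}_k$ listed in Definition \ref{43a}(a), there exists $a_M \in C^\infty_b(\Omega^0)$ with
\begin{align*}
M\psi_k = a_M(y)\,\psi_k.
\end{align*}
I would verify this by direct substitution using $\psi_k = (y_0 - \phi_k(y'',y_n))\beta_k(y)$ from Lemma \ref{relations}: e.g.\ $M_0\psi_k = (y_0-\phi_k)\partial_0\psi_k = \psi_k(1 + (y_0-\phi_k)\partial_0\beta_k/\beta_k)$, and similarly $M_\nu \psi_k$ carries a cancellation of the leading $\partial_\nu\phi_k\cdot\beta_k$ term leaving an overall factor of $(y_0-\phi_k)$. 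The $y_n$-factor in $M_n$ and the $(y_0-\phi_k)$-factor in $M_{n+1}$ supply the needed $\psi_k$ directly. This identity is exactly the infinitesimal expression of the tangency of $\mathcal{M}_k$ to $\Sigma_k = \{\psi_k = 0\}$.

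From this identity I get the chain rule in the convenient form
\begin{align*}
M\bigl[G(y,\tfrac{\psi_k}{\eps})\bigr] = (M_y G)(y,\tfrac{\psi_k}{\eps}) + \bigl[a_M(y)\,\theta_k\, \partial_{\theta_k} G(y,\theta_k)\bigr]\big|_{\theta_k=\psi_k/\eps},
\end{align*}
where $M_y$ acts on $y$ with $\theta_k$ held fixed. Let $\mathcal{S}$ denote the class of $G(y,\theta_k) \in C^\infty(\Omega^0 \times \mathbb{R})$ such that $\theta_k^j \partial_{\theta_k}^l (M_{i_1}\cdots M_{i_s} G)$ is uniformly bounded on $\Omega^0 \times \mathbb{R}$ for all indices. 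Because $\sigma_k$ has compact support in $\theta_k$, the initial data $\sigma_k(y,\theta_k) r_k(y)$ lies in $\mathcal{S}$. The class $\mathcal{S}$ is preserved by $M_y$, by $\partial_{\theta_k}$, by multiplication by $\theta_k$ (using compact support to control the resulting polynomial growth), and by multiplication by $C^\infty_b$ functions of $y$; hence by the displayed formula it is preserved by $M$-application composed with evaluation at $\theta_k = \psi_k/\eps$.

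An induction on $r$ then shows that $M_{i_1}\cdots M_{i_r} u_{0,k}$ is a finite sum of terms of the form $G(y, \psi_k/\eps)$ with $G\in \mathcal{S}$, each bounded in $L^\infty(\Omega^0)$ uniformly in $\eps \in (0,\eps_0]$. This yields $|u_{0,k}|_{N^r_\infty(\Omega^0,\mathcal{M}_k)} \lesssim 1$, and summing over $k$ concludes the proof. The only delicate point is the identity $M\psi_k = a_M \psi_k$; once it is in hand, the $1/\eps$ factors generated by the chain rule are absorbed as $a_M(y)\theta_k$ against the compact $\theta_k$-support of $\sigma_k$, and everything else is book-keeping with Leibniz.
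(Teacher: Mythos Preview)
Your proposal is correct and follows essentially the same approach as the paper: decompose into the $k$-th pieces, estimate each in $N^r_\infty(\Omega^0,\mathcal{M}_k)$, and control the $1/\eps$ from the chain rule using the compact $\theta_k$-support of $\sigma_k$. Your identity $M\psi_k = a_M\psi_k$ is exactly the geometric content the paper exploits when it writes $M_\nu(\psi_k/\eps) = \frac{y_0-\phi_k}{\eps}\,M_\nu\beta_k$ and observes that $\frac{y_0-\phi_k}{\eps}\in\Gamma^\eps(\Omega^0)$; you simply package the same observation as ``$\theta_k$ times a $C^\infty_b$ coefficient'' rather than ``an element of $\Gamma^\eps$ times a profile supported in $J_\eps$.''
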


\begin{proof}
\textbf{1. }We have $U_0\left(y,\frac{\psi}{\eps}\right)=\sum_k\sigma_k\left(y,\frac{\psi_k}{\eps}\right)r_k(y)$, so it suffices to show for each $k$ that 
\begin{align}\label{32c}
\left|\sigma_k\left(y,\frac{\psi_k}{\eps}\right)\right|_{N^r(\Omega^0,\mathcal{M}_k)}\lesssim 1.
\end{align}

\textbf{2. Preparation.} Recall that a set of  generators of $\mathcal{M}_k$ is given by $M_0,\dots,M_{n+1}$, where 

$M_0=(y_0-\phi_k)\partial_0$

$M_\nu=\partial_\nu+(\partial_\nu\phi_k)\partial_0, \;\nu=1,\dots,n-1$

$M_n=y_n(\partial_n+(\partial_n\phi_k)\partial_0)$%+M^0_j.$

$M_{n+1}=(y_0-\phi_k)\partial_n$.\\

Also,  generators of $\mathcal{M}'$ are given by $m_0=y_0\partial_0$, $m_\nu=\partial_\nu$, $\nu=1,\dots,n-1$.\\

For a fixed $k$ let 
\begin{align}\label{32dz}
J_\eps:=\{y\in\Omega^0:|y_0-\phi_k|\lesssim \eps\},\;\;  bJ_\eps:=\{y'\in b\Omega^0:|y_0|\lesssim \eps\}
\end{align}
 and  set\footnote{Here  $C_\gamma$ is independent of $\eps$. Both $J_\eps$ and $\Gamma^\eps(\Omega^0)$ depend on $k$, but we suppress that in the notation.}
\begin{align}\label{32d}
\begin{split}
&\Gamma^\eps(\Omega^0)=\{G^\eps\in C^\infty(\Omega^0,\mathbb{R}):|(M_0,\dots,M_{n+1})^\gamma G^\eps|_{L^\infty(\Omega^0\cap J_\eps)}\lesssim C_\gamma\text{ for }\eps\in (0,1], \;\gamma\in\mathbb{N}_0^{n+2}\}\\
&b\Gamma^\eps=\{g^\eps\in C^\infty(b\Omega^0,\mathbb{R}):|(m_0,\dots,m_{n-1})^\gamma g^\eps|_{L^\infty(b\Omega^0\cap bJ_\eps)}\lesssim C_\gamma\text{ for }\eps\in (0,1], \;\gamma\in\mathbb{N}_0^{n}\}.
\end{split}
\end{align}
\begin{rem}\label{32e}
\textup{Observe that smooth $\eps-$independent functions $c(y)$ lie in $\Gamma^\eps(\Omega^0)$ and $\frac{y_0-\phi_k}{\eps}\in \Gamma^\eps(\Omega^0)$.   Moreover, $\Gamma^\eps(\Omega^0)$ is mapped to itself by elements of $\mathcal{M}_k$ and is closed under products.}
\end{rem}
For the next step we define the set of ``profile-type" functions
\begin{align}\label{32f}
\begin{split}
&\mathcal{P}_k:=\{P\in C^\infty_b(\Omega^0\times\mathbb{R}_{\theta_k},\mathbb{R}): P(y,\theta_k)\text{ has support in }|\theta_k|\leq 1\}\\
&b\mathcal{P}_k:=\{p\in C^\infty_b(b\Omega^0\times\mathbb{R}_{\theta_0},\mathbb{R}): p(y',\theta_0)\text{ has support in }|\theta_0|\leq 1\}.
\end{split}
\end{align}

\textbf{3. }Using $\psi_k=(y_0-\phi_k)\beta_k$ we compute, for example,
\begin{align}\label{32g}
M_\nu \sigma_k\left(y,\frac{\psi_k}{\eps}\right)=(M_\nu\sigma_k(y,\theta_k))|_{\theta_k=\frac{\psi_k}{\eps}}+\partial_{\theta_k}\sigma_k\left(y,\frac{\psi_k}{\eps}\right)\cdot \frac{y_0-\phi_k}{\eps}M_\nu\beta_k(y).
\end{align}
This has the form $P(y,\frac{\psi_k}{\eps})+P(y,\frac{\psi_k}{\eps}) G^\eps(y)$, where $P\in \mathcal{P}_k$, $G^\eps\in\Gamma^\eps(\Omega^0)$ here and below can change from term to term.   A similar result is obtained for any choice of generator $M_j$ in \eqref{32g}. Thus, with Remark \ref{32e} an induction on $|\gamma|$ shows that 
$(M_0,\dots,M_{n+1})^\gamma  \sigma_k\left(y,\frac{\psi_k}{\eps}\right)$ is a finite sum of terms of the form 
$P(y,\frac{\psi_k}{\eps}) G^\eps(y)$. 
We have 
\begin{align}\label{32hh}
\left|P(y,\frac{\psi_k}{\eps}) G^\eps(y)\right|_{L^\infty(\Omega^0)}\lesssim 1,
\end{align}
since $P(y,\frac{\psi_k}{\eps})$ has support in $J_\eps$, so by the estimate \eqref{44a}, this concludes the proof.\footnote{Henceforth, we will usually apply \eqref{44a} without comment.} 

\end{proof}

\subsubsection{Estimate of $V(y,\frac{\psi}{\eps})$.}

Next we estimate the noninteraction term of $U^\eps_1$.

\begin{prop}\label{32h}
For all $r\in \mathbb{N}_0:=\{0,1,2,3,\dots\}$ we have
\begin{align}\label{32i}
\begin{split}
&(a) \left|V\left(y,\frac{\psi}{\eps}\right)\right|_{N^r(\Omega^0)}\lesssim 1\\
&(b) \left\langle V\left(y',0,\frac{y_0}{\eps},\dots,\frac{y_0}{\eps}\right)\right\rangle_{N^r(b\Omega^0)}\lesssim 1.
\end{split}
\end{align}
\end{prop}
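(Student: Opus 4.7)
The plan is to adapt the argument of Proposition \ref{32b} by decomposing each $V_k$ into a compactly supported profile plus a smooth ``step'' arising from the nonzero limit at $+\infty$ noted in Remark \ref{decV}(b). Set
$$V_{k,\infty}(y):=\lim_{\theta_k\to+\infty}V_k(y,\theta_k)=-\int_{\mathbb{R}}Q_k(y)H_{1k}(y,s)\,ds,$$
which lies in $C^\infty_b(\Omega^0)$ since $H_{1k}$ has compact support in $\theta_k$ by Remark \ref{decV}(a). Pick $\chi_+\in C^\infty(\mathbb{R})$ vanishing for $\theta_k\le-M-1$ and equal to $1$ for $\theta_k\ge M+1$, and write
$$V_k(y,\theta_k)=\widetilde V_k(y,\theta_k)+\chi_+(\theta_k)V_{k,\infty}(y),$$
where $\widetilde V_k:=V_k-\chi_+\cdot V_{k,\infty}\in\mathcal{P}_k$ (after trivially adjusting the support bound).

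For the compactly supported piece $\widetilde V_k(y,\psi_k/\eps)$, the proof of Proposition \ref{32b} applies verbatim: on its support $|\psi_k|\lesssim\eps$, and iterated application of $M_0,\dots,M_{n+1}$ produces finite sums of terms $P(y,\psi_k/\eps)G^\eps(y)$ with $P\in\mathcal{P}_k$ and $G^\eps\in\Gamma^\eps(\Omega^0)$, each uniformly bounded in $L^\infty(\Omega^0)$.

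For the step piece $\chi_+(\psi_k/\eps)V_{k,\infty}(y)$, the factor $V_{k,\infty}$ is $\eps$-independent and in $C^\infty_b(\Omega^0)$, hence harmless. The key observation, driving an induction on $|\gamma|$, is that every generator $M_j$ of $\mathcal{M}_k$ is tangent to $\Sigma_k=\{\psi_k=0\}$ by construction, so $M_j\psi_k$ vanishes on $\Sigma_k$ and Hadamard's lemma gives $M_j\psi_k=h_j(y)\psi_k$ for some $h_j\in C^\infty_b(\Omega^0)$. Consequently
$$M_j\bigl[\chi_+(\psi_k/\eps)\bigr]=\chi_+'(\psi_k/\eps)\cdot\frac{\psi_k}{\eps}\cdot h_j(y).$$
Since $\chi_+'$ has compact support, $\psi_k/\eps$ is uniformly bounded on its support, so this output is of the form (element of $\mathcal{P}_k$ evaluated at $\psi_k/\eps$)$\,\cdot\,$(element of $\Gamma^\eps(\Omega^0)$). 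An induction then shows that $(M_0,\dots,M_{n+1})^\gamma[\chi_+(\psi_k/\eps)]$ is a finite sum of such bounded terms, uniformly in $\eps$. Combined with the boundedness of $V_{k,\infty}$ and its $M_j$-derivatives, this gives the $L^\infty$ bound, and \eqref{44a} then yields (a).

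Part (b) follows from the same decomposition applied on $b\Omega^0$: at $y_n=0$ one has $\psi_k=y_0$ and $\phi_k=0$ for every $k$, so $V_k(y',0,y_0/\eps)$ is handled using the generators $m_0=y_0\partial_0$ and $m_\nu=\partial_\nu$ of $\mathcal{M}'$, for which the analogue $m_jy_0\in y_0\cdot C^\infty_b(b\Omega^0)$ of the tangency identity holds trivially, and the same induction goes through with $b\mathcal{P}_k$ and $b\Gamma^\eps$ replacing $\mathcal{P}_k$ and $\Gamma^\eps(\Omega^0)$. The main obstacle compared with Proposition \ref{32b} is precisely that the profiles $V_k$ are not compactly supported in $\theta_k$; the tangency identity $M_j\psi_k\in\psi_k\cdot C^\infty_b(\Omega^0)$ is exactly what is needed to show that the resulting $O(1)$ ``step'' of $V_k(y,\psi_k/\eps)$ across $\Sigma_k$ is still conormal with respect to $\mathcal{M}_k$.
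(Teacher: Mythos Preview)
Your proof is correct but takes a different route than the paper's. The paper works directly with the integral representation $\int_{-\infty}^{\psi_k/\eps}P(y,s)\,ds$: applying a generator $M_j$ produces another integral of the same form (with $M_jP$ as integrand) plus the boundary term $(M_j\tfrac{\psi_k}{\eps})\,P(y,\tfrac{\psi_k}{\eps})$, and an induction shows every iterated derivative is a finite sum of such integrals and terms $G^\eps(y)P(y,\tfrac{\psi_k}{\eps})$ with $G^\eps\in\Gamma^\eps(\Omega^0)$. No decomposition of $V_k$ is needed; the integral is carried intact through the induction and is obviously bounded because $P$ has compact $\theta_k$-support.

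Your approach instead splits $V_k=\widetilde V_k+\chi_+\,V_{k,\infty}$, reducing the compactly supported piece $\widetilde V_k$ to Proposition~\ref{32b} and handling the step $\chi_+(\psi_k/\eps)$ via the tangency identity $M_j\psi_k\in\psi_k\cdot C^\infty_b(\Omega^0)$. Both arguments ultimately rest on this same tangency (in the paper it appears as $M_j(\psi_k/\eps)\in\Gamma^\eps(\Omega^0)$, already implicit in \eqref{32g}). Your decomposition isolates the genuinely new feature---the nonvanishing limit of $V_k$ at $+\infty$---and makes explicit why it causes no trouble, at the cost of an extra step; the paper's direct differentiation of the integral is more economical since the integral form automatically absorbs both the compact part and the step.
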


\begin{proof}
\textbf{1. }We have $V(y,\theta)=\sum_kV_k(y,\theta_k)$ and 
$$V_k\left(y,\frac{\psi_k}{\eps}\right)=-\int^{\frac{\psi_k}{\eps}}_{-\infty}Q_k(y)H_{1k}(y,s)ds.$$
By Remark \ref{decV} this integral can be written as a finite sum of terms of the form
$$\left(\int^{\frac{\psi_k}{\eps}}_{-\infty}P(y,s)ds\right) \;C_k(y),$$
where $P(y,\theta_k)\in \mathcal{P}_k$ and $C_k(y)$ is a smooth $\eps-$independent function of $y$.   We can ignore $C_k(y)$ in proving \eqref{32i}, so it will suffice to show for each $k$ that 
\begin{align}\label{32ii}
\left|\int^{\frac{\psi_k}{\eps}}_{-\infty}P(y,s)ds\right|_{N^r(\Omega^0,\mathcal{M}_k)}\lesssim 1.
\end{align}

\textbf{2. }We compute for example 
\begin{align}\label{32j}
M_0\int^{\frac{\psi_k}{\eps}}_{-\infty}P(y,s)ds=\int^{\frac{\psi_k}{\eps}}_{-\infty}M_0P(y,s)ds+\left(M_0\frac{\psi_k}{\eps}\right) P\left(y,\frac{\psi_k}{\eps}\right),
\end{align}
which has the form $\int^{\frac{\psi_k}{\eps}}_{-\infty}P(y,s)ds+G^\eps(y)P\left(y,\frac{\psi_k}{\eps}\right)$, where $G^\eps\in\Gamma^\eps(\Omega^0)$.   A similar result for the other choices of $M_j$ and an induction on $|\gamma|$ show that $(M_0,\dots,M_{n+1})^\gamma \int^{\frac{\psi_k}{\eps}}_{-\infty}P(y,s)ds$ is a finite sum of terms of the form\footnote{Recall \eqref{32g}.}
$$\int^{\frac{\psi_k}{\eps}}_{-\infty}P(y,s)ds \text{ or }G^\eps(y)P\left(y,\frac{\psi_k}{\eps}\right)$$
An obvious estimate of the integral and  \eqref{32hh} yields \eqref{32ii}.

A parallel argument using $b\Gamma^\eps$ \eqref{32d} and $b\mathcal{P}_k$ \eqref{32f} proves \eqref{32i}(b).
\end{proof}

As an immediate corollary of the preceding proof we give an estimate of a piece of $\mathcal{E}_1^\eps$.
\begin{cor}\label{33a}
For all $r\in \mathbb{N}_0:=\{0,1,2,3,\dots\}$ we have
\begin{align}\label{33b}
\left|(L(y,\partial_y)V(y,\theta))|_{\theta=\frac{\psi}{\eps}}\right|_{N^r(\Omega^0)}\lesssim 1.
\end{align}

\end{cor}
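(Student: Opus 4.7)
The plan is to leverage the structural analysis of $V(y,\theta)$ already carried out in the proof of Proposition \ref{32h}, observing that applying $L(y,\partial_y)$ does not alter the essential form of the expression. Recall that in \eqref{a9z} the meaning of $L(y,\partial_y)V$ is the $y$-differentiation of $V(y,\theta)$ with $\theta$ treated as an independent parameter (cf.\ \eqref{a9}); the composition with $\psi/\eps$ occurs only afterward.

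First I would write $L(y,\partial_y)V(y,\theta)=\sum_k L(y,\partial_y)V_k(y,\theta_k)$ and commute $L(y,\partial_y)$ with the $s$-integral defining $V_k$ in \eqref{Vk}, giving
\begin{align*}
L(y,\partial_y)V_k(y,\theta_k)=-\int^{\theta_k}_{-\infty}L(y,\partial_y)\bigl[Q_k(y)H_{1k}(y,s)\bigr]\,ds.
\end{align*}
The integrand is a finite sum of products $\tilde C(y)\tilde P(y,s)$ with $\tilde C\in C^\infty_b(\Omega^0)$ (an $\eps$-independent smooth function built from $Q_k$, $r_k$, $\pi_k$, and coefficients of $L$, along with their derivatives) and $\tilde P\in\mathcal{P}_k$. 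Indeed, by \eqref{c16z} and Remark \ref{decV}, $H_{1k}(y,s)$ is a finite sum of terms of the form $(\partial_y\sigma_k)(y,s)\,C(y)$, $\sigma_k(y,s)C(y)$, or $\sigma_k^2(y,s)C(y)$, with each factor compactly supported in $s\in[-1,1]$ since $\sigma_k$ is; applying one more $y$-derivative preserves this profile-type structure.

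Evaluating at $\theta_k=\psi_k/\eps$ therefore produces a finite sum of expressions of the form
\begin{align*}
\tilde C(y)\int^{\psi_k/\eps}_{-\infty}\tilde P(y,s)\,ds,
\end{align*}
which is precisely the form of the terms estimated in Step~2 of the proof of Proposition \ref{32h}. That argument (using the generators $M_0,\dots,M_{n+1}$ of $\mathcal{M}_k$, the class $\Gamma^\eps(\Omega^0)$ of Remark \ref{32e}, induction on the multi-index $|\gamma|$, and the bound \eqref{32hh}) applies verbatim and yields
\begin{align*}
\Bigl|(L(y,\partial_y)V_k(y,\theta_k))|_{\theta_k=\psi_k/\eps}\Bigr|_{N^r(\Omega^0,\mathcal{M}_k)}\lesssim 1.
\end{align*}
Summing over $k$ gives \eqref{33b}. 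There is essentially no new obstacle—the only point to verify is that the extra factors introduced by $L(y,\partial_y)$ remain smooth, $\eps$-independent, and do not disrupt the profile-type/profile-support bookkeeping; this follows because $L$ has smooth coefficients and commutes past $\int^{\theta_k}_{-\infty}$ without differentiating the upper limit.
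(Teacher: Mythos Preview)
Your proposal is correct and matches the paper's approach: the paper states this as ``an immediate corollary of the preceding proof'' without further argument, and you have correctly filled in the details by noting that $L(y,\partial_y)V_k(y,\theta_k)$ has the same structural form as $V_k$ itself, so the argument of Proposition~\ref{32h} applies verbatim.
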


\subsubsection{Estimates of $(H_2-\mathcal{H}_2)|_{\theta=\frac{\psi}{\eps},\theta_0=\frac{y_0}{\eps}, \xi_n=\frac{y_n}{\eps}}$ and $W^\eps|_{\theta_0=\frac{y_0}{\eps},\xi_n=\frac{y_n}{\eps}}$.}\label{H2}  
We give the estimate of $W^\eps$ in Proposition \ref{33cc} below.  First we do a simpler, related estimate of the contribution to $\mathcal{E}_0^\eps$ given by the $H_2-\mathcal{H}_2$ term.

By Proposition \ref{suppprop} the term $(H_2-\mathcal{H}_2)|_{\theta=\frac{\psi}{\eps},\theta_0=\frac{y_0}{\eps},\xi_n=\frac{y_n}{\eps}}$ is supported in the interaction region $I_\eps$, a fact that simplifies the analysis and makes possible an estimate of the $N^k(\Omega_T,\mathcal{M}_0)$ norm.\footnote{This dominates the $N^k(\Omega^0)$ norm since $\mathcal{M}_k\subset \mathcal{M}_0$ for all $k$.}  Recall that a set of generators of $\mathcal{M}_0$ is given by $\{V_j,j=1,\dots, n+3\}$, which are respectively
\begin{align}\label{gen}
y_0\partial_{0},\; y_0\partial_n,\; y_n\partial_0, \;y_n\partial_n, \;\partial_l, l=1\dots,n-1.
\end{align}

With 
$\psi_{m,0}(y)=y_0+\omega_m(y')y_n$, by \eqref{d0} we can write 
\begin{align}
\begin{split}
&\sigma_m(y,\frac{\psi_m}{\eps})-\sigma_m(y,\frac{\psi_{m,0}}{\eps})=\left(\int^1_0\partial_{\theta_m}\sigma_m(y,\frac{\psi_{m,0}}{\eps}+s(\frac{\psi_{m}}{\eps}-\frac{\psi_{m,0}}{\eps}))ds\right) c(y)\frac{y_n^2}{\eps}=
f\left(y,\frac{y_0}{\eps},\frac{y_n}{\eps}\right)\frac{y_n^2}{\eps},
\end{split}
\end{align}
where $f(y,\theta_0,\xi_n)$ is a smooth function of its arguments which is bounded along with its derivatives on \emph{bounded} subsets of $\Omega^0\times\mathbb{R}^2_{\theta_0,\xi_n}$.\footnote{With $\psi_m-\psi_{m,0}=c(y)y_n^2$, we have  $f(y,\theta_0,\xi_n)=c(y) \int^1_0\partial_{\theta_m}\sigma_m(y,\theta_0+\omega_m(y')\xi_n+sc(y)y_n\xi_n)ds.$ } 
 Let $\gamma$ be a multi-index with $|\gamma|=k\in\mathbb{N}_0$.  Then for some constant $C$ (possibly $0$), 
\begin{align}\label{27b}
(V_1,\dots,V_{n+3})^\gamma \left(\frac{y_n^2}{\eps}\right)= C\frac{(y_0,y_n)^\kappa}{\eps}\text{ for some multi-index }|\kappa|=2.
\end{align}
Observe for example that 
\begin{align}
y_0\partial_0 \left(f(y,\frac{y_0}{\eps},\frac{y_n}{\eps})\right)=\left(y_0\partial_0f+\partial_{\theta_0}f\;\theta_0\right)|_{\theta_0=\frac{y_0}{\eps},\xi_n=\frac{y_n}{\eps}}.
\end{align}
Similar computations with the other generators of $\mathcal{M}_0$ and an easy induction on $k=|\gamma|$ yield
%By induction on $|\gamma|=k$ we can show that 
\begin{align}\label{27c}
(V_1,\dots,V_{n+3})^\gamma\left[f\left(y,\frac{y_0}{\eps},\frac{y_n}{\eps}\right)\right]=\sum^k_{L=0}\sum_{|\alpha|=L}f_{L,\alpha}\left(y,\frac{y_0}{\eps},\frac{y_n}{\eps}\right)\cdot \frac{(y_0,y_n)^\alpha}{\eps^L}, 
\end{align}
where  $f_{L,\alpha}(y,\theta_0,\xi_n)$ has the same properties  as $f$.  Thus,\footnote{Here we use that $\frac{y_n}{\eps}$, $\frac{y_0}{\eps}$ are bounded on $I_\eps$.}
\begin{align}\label{27d}
\left|(V_1,\dots,V_{n+3})^\gamma\left[f\left(y,\frac{y_0}{\eps},\frac{y_n}{\eps}\right)\right]\right|_{N^k(\Omega^0\cap I_\eps,\mathcal{M}_0)}\lesssim 1, \text{ since }\left|\frac{(y_0,y_n)^\alpha}{\eps^L}\right|\lesssim 1\text{ on }I_\eps \text{ when }|\alpha|=L.
\end{align}
We also have $\left|\frac{(y_0,y_n)^\kappa}{\eps}\right|\lesssim \eps$ on $I_\eps$ when $|\kappa|=2$.
%\begin{align}
%\left|\frac{(y_0,y_n)^\kappa}{\eps}\right|\lesssim \eps^{2\beta-1}\text{ and }\left|\frac{(y_0,y_n)^\alpha}{\eps^L}\right|\lesssim \eps^{L(\beta-1)}\text{ in }I_\eps
%\end{align}
% and $|\alpha|=L\leq k$.   
With  \eqref{27d} this implies
\begin{align}\label{27a}
\left|\sigma_m(y,\frac{\psi_m}{\eps})-\sigma_m(y,\frac{\psi_{m,0}}{\eps})\right|_{N^k(\Omega^0\cap I_\eps,\mathcal{M}_0)}\lesssim \eps.
\end{align}

\begin{prop}\label{27za}
We have
\begin{align}\label{27z}
\left|(H_2-\mathcal{H}_2)|_{\theta=\frac{\psi}{\eps},\theta_0=\frac{\psi_0}{\eps},\xi_n=\frac{y_n}{\eps}}\right|_{N^k(\Omega^0,\mathcal{M}_0)}\lesssim \eps.
\end{align}
\end{prop}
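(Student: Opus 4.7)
The plan is to leverage two facts just established: by Proposition \ref{suppprop}(b) the difference $(H_2-\mathcal{H}_2)|_{\theta=\psi/\eps,\theta_0=\psi_0/\eps,\xi_n=y_n/\eps}$ is supported in the interaction region $I_\eps$, and by \eqref{27a} each individual replacement $\sigma_m(y,\psi_m/\eps)\leadsto\sigma_m(y,\psi_{m,0}/\eps)$ costs only $O(\eps)$ in $N^k(\Omega^0\cap I_\eps,\mathcal{M}_0)$. The bound will then follow by combining \eqref{27a} with uniform $L^\infty\cap N^k_\infty$ bounds on the remaining factors via a Leibniz argument.

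First I would decompose. By Remark \ref{d2a}, $(H_2-\mathcal{H}_2)|_{\theta=\psi/\eps,\theta_0=\psi_0/\eps,\xi_n=y_n/\eps}$ is a finite sum of terms of the form
\begin{equation*}
\bigl[\sigma_m(y,\psi_m/\eps)\,\sigma_p(y,\psi_p/\eps) - \sigma_m(y,\psi_{m,0}/\eps)\,\sigma_p(y,\psi_{p,0}/\eps)\bigr]\,c(y)\,r_k(y),\quad m\neq p,
\end{equation*}
with $\psi_{m,0}(y)=y_0+\omega_m(y')y_n$ and $c(y)$ smooth and $\eps$-independent. Writing $ab-\tilde a\tilde b=(a-\tilde a)b+\tilde a(b-\tilde b)$, each such term becomes a sum of two terms, each containing exactly one factor of the form $\sigma_l(y,\psi_l/\eps)-\sigma_l(y,\psi_{l,0}/\eps)$, which is $O(\eps)$ in $N^k(\Omega^0\cap I_\eps,\mathcal{M}_0)$ by \eqref{27a}.

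Next I would show that the companion factors $\sigma_l(y,\psi_l/\eps)$ and $\sigma_l(y,\psi_{l,0}/\eps)$ are uniformly bounded in $L^\infty\cap N^k_\infty(\Omega^0\cap I_\eps,\mathcal{M}_0)$. This adapts the argument of Proposition \ref{32b} from $\mathcal{M}_k$ to $\mathcal{M}_0$: applying a generator $V\in\{y_0\partial_0,\,y_0\partial_n,\,y_n\partial_0,\,y_n\partial_n,\,\partial_l\}$ to $\sigma_l(y,\psi_l/\eps)$ yields, via the chain rule, expressions of the form $P(y,\psi_l/\eps)\,G^\eps(y)$, where $P$ is of profile type and $G^\eps$ is a polynomial in the smooth quantities $V\psi_l$ divided by $\eps$. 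The key observation is that on $I_\eps$ we have $|y_0|,|y_n|\lesssim\eps$, so each $y_0/\eps$ or $y_n/\eps$ produced by differentiating $\psi_l/\eps=(y_0+O(y_n))/\eps$ against a generator containing an $y_0$ or $y_n$ prefactor is $O(1)$ uniformly in $\eps$; the other $G^\eps$ factors are smooth and $\eps$-independent. An induction on $|\gamma|$ then yields $|\sigma_l(y,\psi_l/\eps)|_{N^k_\infty(\Omega^0\cap I_\eps,\mathcal{M}_0)}\lesssim 1$. The same argument, with $\psi_l$ replaced by $\psi_{l,0}$, handles the shifted version.

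Finally I would combine these pieces through the Leibniz formula $V^\gamma(fg)=\sum_{\gamma'+\gamma''=\gamma}\binom{\gamma}{\gamma'}(V^{\gamma'}f)(V^{\gamma''}g)$, taking $L^\infty$ norms on the uniformly bounded factor and $L^2$ norms on the $O(\eps)$ difference factor (absorbing smooth $\eps$-independent factors $c(y),r_k(y)$ harmlessly). Because all terms are supported in $I_\eps\subset \Omega^0$, we may use \eqref{44a} to pass from $N^k_\infty$ to $N^k$ bounds on the bounded factor, and the product bound
\begin{equation*}
\bigl|(a-\tilde a)\,b\bigr|_{N^k(\Omega^0,\mathcal{M}_0)}\lesssim \bigl|a-\tilde a\bigr|_{N^k(\Omega^0\cap I_\eps,\mathcal{M}_0)}\cdot\bigl|b\bigr|_{N^k_\infty(\Omega^0\cap I_\eps,\mathcal{M}_0)}\lesssim \eps
\end{equation*}
gives \eqref{27z}. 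The main obstacle will be the bookkeeping in the second step, namely rigorously verifying that applying $\mathcal{M}_0$ generators to $\sigma_l(y,\psi_l/\eps)$ on the restricted domain $I_\eps$ preserves uniform boundedness; the nonlinearity of $\psi_l$ and the presence of the non-tangential combinations $y_0\partial_n,\,y_n\partial_0$ (which do not belong to the more favorable $\mathcal{M}_k$) make the induction less automatic than in Proposition \ref{32b}, but the smallness $|y_0|,|y_n|\lesssim\eps$ in $I_\eps$ absorbs every potentially singular factor.
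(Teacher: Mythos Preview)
Your approach is correct and follows the same decomposition as the paper: use Remark~\ref{d2a} to reduce to products $\sigma_m\sigma_p$ with $m\neq p$, write $ab-\tilde a\tilde b=(a-\tilde a)b+\tilde a(b-\tilde b)$, and exploit support in $I_\eps$. The paper's execution is slightly more streamlined: rather than estimating the difference and companion factors separately and combining via Leibniz, it observes that both $\sigma_m(y,\psi_m/\eps)-\sigma_m(y,\psi_{m,0}/\eps)=f(y,y_0/\eps,y_n/\eps)\,y_n^2/\eps$ and $\sigma_p(y,\psi_p/\eps)c(y)=g(y,y_0/\eps,y_n/\eps)$ are smooth functions of $(y,y_0/\eps,y_n/\eps)$ with the stated boundedness on bounded sets, so their product is again $h(y,y_0/\eps,y_n/\eps)\,y_n^2/\eps$ for such an $h$, and then \eqref{27c}--\eqref{27d} apply directly without the Leibniz bookkeeping you flag as the main obstacle.
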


\begin{proof}
Using Remark \ref{d2a} we reduce to estimating in $\Omega_T\cap I_\eps$ products like
\begin{align}
\begin{split}
&\left[\sigma_m(y,\frac{\psi_m}{\eps})-\sigma_m(y,\frac{\psi_{m,0}}{\eps})\right]\cdot \sigma_p(y,\frac{\psi_p}{\eps})c(y)=\\
 &\qquad f\left(y,\frac{y_0}{\eps},\frac{y_n}{\eps}\right)\frac{y_n^2}{\eps}\cdot g\left(y,\frac{y_0}{\eps},\frac{y_n}{\eps}\right)=h\left(y,\frac{y_0}{\eps},\frac{y_n}{\eps}\right)\frac{y_n^2}{\eps},
\end{split}
\end{align}
where $g(y,\theta_0,\xi_n)$ and $h(y,\theta_0,\xi_n)$ have the same properties as $f$.  %\footnote{Now $c_5$ and $c_6$ are vector-valued.}    
So essentially the same estimate as the one that gave \eqref{27a} yields the result.
\end{proof}

%\begin{rem}
%If $\frac{1}{2}<\beta<1$ is chosen close enough to $1$ (depending on $k$) the exponent 
%$2\beta-1+k(\beta-1)$ in \eqref{27z} is $>0$.

%\end{rem}

Next we estimate 
\begin{align}\label{33c}
W^\eps(y,\theta_0,\xi_n)|_{\theta_0=\frac{y_0}{\eps},\xi_n =\frac{y_n}{\eps}}=\chi^\eps(y_0,y_n)\sum_kt_k(y,\theta_0,\xi_n)r_k(y',0)|_{\theta_0=\frac{y_0}{\eps},\xi_n =\frac{y_n}{\eps}},%y_nf(y,\frac{y_0}{\eps},\frac{y_n}{\eps})
\end{align}
where each  $t_k(y,\theta_0,\xi_n)$ is $C^\infty$ with derivatives that are bounded on bounded subsets of $\Omega^0\times\mathbb{R}^2_{\theta_0,\xi_n}$ by   Remark \ref{d10yy}.

\begin{prop}\label{33cc}
For all $r\in \mathbb{N}_0:=\{0,1,2,3,\dots\}$ we have
\begin{align}
\begin{split}
&(a) \left|W^\eps\left(y,\frac{y_0}{\eps},\frac{y_n}{\eps}\right)\right|_{N^r(\Omega^0)}\lesssim 1\\
&(b) \left\langle W^\eps\left(y',0,\frac{y_0}{\eps},0\right)\right\rangle_{N^r(b\Omega^0)}\lesssim 1.
\end{split}
\end{align}

\end{prop}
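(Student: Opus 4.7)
The plan is to apply Lemma \ref{43e}: write $1 = \sum_{j=0}^N \chi_j$ and decompose $W^\eps = \sum_k W_k^\eps$ with $W_k^\eps := \chi^\eps(y_0,y_n) t_k(y, y_0/\eps, y_n/\eps) r_k(y',0)$; it then suffices to bound $\chi_j W_k^\eps$ in $N^r(\Omega^0, \mathcal{M}_j)$ uniformly in $\eps$ for every pair $(j,k)$. The preparatory step is a structural analysis of $t_k$. Changing variables via $\eta := \theta_0 + \omega_k(y')\xi_n$, write $t_k(y,\theta_0,\xi_n) = \tilde t_k(y,\eta,\xi_n)$. Using the explicit form of $\mathcal{H}_2^k$ (Remark \ref{d2a}), each term of $\tilde t_k$ is either a product $\sigma_k(y,\eta)\,\widehat{\sigma_p}(y, \eta + (\omega_p-\omega_k)\xi_n)\,c(y)$ (when $m=k$ or $p=k$, with $\widehat{\sigma_p}$ a $\theta_0$-antiderivative of $\sigma_p$) or an integral of compactly $s$-supported $\sigma_m\sigma_p$-products (when $m,p \neq k$). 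In both cases $\tilde t_k$ is bounded and supported in $\{|\eta| \leq C\}$; moreover, $\partial_{\xi_n}\tilde t_k$ carries an additional $\sigma_l$-factor that restricts $|\xi_n| \leq C$. Consequently $W_k^\eps$ is supported in $\{|\psi_{k,0}| \leq C\eps\} \cap \operatorname{supp}\chi^\eps$, a set of volume $\lesssim \eps^{3/2}$.

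For $j = k$, I would apply the $\mathcal{M}_k$ generators of Definition \ref{43a}. The central computation is that, on $\operatorname{supp}\chi^\eps$, each generator $M \in \{M_0, M_\nu, M_n, M_{n+1}\}$ satisfies $M\psi_{k,0} = O(\psi_{k,0}) + O(y_0 y_n) + O(y_n^2)$, so that $M(\psi_{k,0}/\eps) = O(1)$ on $\operatorname{supp}\chi^\eps \cap \{|\psi_{k,0}| \leq C\eps\}$. This uses the eikonal identity $\omega_k(0,y'') = -\partial_n\phi_k(y'',0)$, which follows from \eqref{a4}, \eqref{a7}, and $\psi_k|_{y_n=0}=y_0$; it promotes the naive $O(y_n)$ bound on $M_\nu\psi_{k,0}$ to $O(y_0 y_n) + O(y_n^2)$. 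The ``dangerous'' factor $M(y_n/\eps) = y_n/\eps = O(1/\sqrt{\eps})$ only appears when multiplying a $\partial_{\xi_n}\tilde t_k$-type quantity whose $\sigma_l$-factor forces $|y_n| \leq C\eps$, making the product bounded. An induction on $|\gamma|$ modeled on the proof of Proposition \ref{32b}, performed in $(y,\eta,\xi_n)$-variables, then yields $|(M_0,\ldots,M_{n+1})^\gamma W_k^\eps|_{L^\infty(\Omega^0)} \lesssim 1$, whence $|\chi_k W_k^\eps|_{N^r(\Omega^0,\mathcal{M}_k)} \lesssim \eps^{3/4}$ by the volume bound and \eqref{44a}.

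For $j \neq k$ (and $j = 0$), I would show $\chi_j W_k^\eps$ is supported in the interaction region $I_\eps$. Indeed, $W_k^\eps$ lives in the strip $|y_0 + \omega_k y_n| \leq C\eps$ which, by \eqref{phases2}(d), lies inside the wedge $W_k$ whenever $y_n \geq c\eps$; since $\operatorname{supp}\chi_j \cap W_k = \emptyset$ for $j \neq k$ and similarly $\operatorname{supp}\chi_0$ avoids each $W_k$, we must have $y_n \leq c\eps$ and hence $|y_0| \leq C\eps$ on $\operatorname{supp}(\chi_j W_k^\eps)$, so this support lies in $I_\eps$. There the method of Proposition \ref{27za} applies verbatim: $\mathcal{M}_j$-vector fields applied to $t_k(y, y_0/\eps, y_n/\eps)$ produce sums of terms of the form $f(y,y_0/\eps, y_n/\eps)\,(y_0,y_n)^\alpha/\eps^L$ with $|\alpha|=L$ and $f$ smooth with derivatives bounded on bounded subsets, each uniformly bounded on $I_\eps$. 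This gives $|\chi_j W_k^\eps|_{N^r(\Omega^0,\mathcal{M}_j)} \lesssim \sqrt{\operatorname{vol}(I_\eps)} \lesssim \eps$. Summing over $(j,k)$ via Lemma \ref{43e} yields part (a).

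For part (b), at $y_n = 0$ we have $\chi(y_n/\sqrt{\eps}) = 1$ and $t_k(y',0,\theta_0,0) = \int_{+\infty}^0 \mathcal{H}_2^k(y',0, \theta_0 - \omega_k(y')s, s)\,ds$ has compact support in $\theta_0$ because $\mathcal{H}_2^k$ is compactly $(\theta_0,\xi_n)$-supported. Thus $W^\eps(y',0,y_0/\eps,0)$ reduces to a boundary profile $\chi(y_0/\sqrt{\eps})\sum_k \tau_k(y',y_0/\eps) r_k(y',0)$ with each $\tau_k$ compactly supported in its second argument, and the argument of Proposition \ref{32h}(b) applies directly to give $\langle W^\eps(y',0,y_0/\eps,0)\rangle_{N^r(b\Omega^0)} \lesssim \sqrt{\eps} \lesssim 1$. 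The principal obstacle is the $j=k$ case of part (a): establishing the eikonal cancellation $M\psi_{k,0} = O(\eps)$ on $\operatorname{supp}\chi^\eps$, without which the naive estimate would give $M_\nu(\psi_{k,0}/\eps) = O(1/\sqrt{\eps})$, losing $\eps^{-1/2}$ per derivative and ruining $N^r$-control for $r \geq 2$.
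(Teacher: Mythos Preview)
Your approach is essentially the paper's: decompose via the wedge partition $\{\chi_j\}$, handle $j\neq k$ by showing support lies in $I_\eps$ (the paper's Lemma~\ref{28m} and Proposition~\ref{28r}), and handle $j=k$ via the eikonal cancellation $M_\nu(y_0-\phi_k)=0$ together with the observation that $y_n/\eps$ factors appear only alongside quantities supported in $I_\eps$ (the paper's Proposition~\ref{30a}). Part~(b) matches the paper's Proposition~\ref{36a}. The paper organizes the $j=k$ induction via the function classes $\Lambda^\eps(\Omega^0)$ and $\Lambda^{\sqrt\eps}(\Omega^0)$ (Definition~\ref{30d}) rather than your $(y,\eta,\xi_n)$ change of variables, but the content is the same.

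There is one small gap in your $j=k$ analysis. In the product case $m=k$ (or $p=k$), the factor $\widehat{\sigma_p}\bigl(y,\eta+(\omega_p-\omega_k)\xi_n\bigr)$ depends on $y'$ through $\omega_p-\omega_k$, so the $(M)_y$-part of the chain rule (not just $M(y_n/\eps)$) produces an additional $\xi_n$ factor. Your stated mechanism---that $y_n/\eps$ arises only from $M(y_n/\eps)$ multiplying a $\partial_{\xi_n}\tilde t_k$-type quantity---misses this source. The fix is immediate: this extra $\xi_n$ comes multiplied by $\sigma_k(y,\eta)\,\sigma_p\bigl(y,\eta+(\omega_p-\omega_k)\xi_n\bigr)$, which forces $|\xi_n|\le C$ just as $\partial_{\xi_n}\tilde t_k$ does. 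The paper sidesteps this case-splitting by keeping all terms in integral form \eqref{30i} and tracking $s^l$ factors, bounded via the change of variable \eqref{30j} applied to whichever of $m,p$ is $\neq k$; your substitution $\eta=\theta_0+\omega_k\xi_n$ is a clean simplification, but you must then account for the explicit $\xi_n$ in the argument of $\widehat{\sigma_p}$.
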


This will be an immediate consequence of Propositions \ref{28r}, \ref{30a}, and \ref{36a}  below. 
In preparation for the proof note that by \eqref{d10z} each $t_k(y,\theta_0,\xi_n)$ is a finite sum of terms of the form
\begin{align}\label{33d}
c(y)\int^{\xi_n}_{+\infty}\sigma_m\left(y,\theta_0+\omega_k(y')\xi_n+s(\omega_m(y')-\omega_k(y'))\right)\sigma_p\left(y,\theta_0+\omega_k(y')\xi_n+s(\omega_p(y')-\omega_k(y'))\right) ds,
\end{align}
where $m\neq p$.   Unfortunately, $t_k(y,\theta_0,\xi_n)|_{\theta_0=\frac{y_0}{\eps},\xi_n =\frac{y_n}{\eps}}$ is \emph{not} supported in the interaction region $I_\eps$.\footnote{If it were, we could essentially repeat the estimates of section \ref{H2} here.} But we do have:

\begin{lem}\label{28g}
For $y\in \Omega^0$ we have for every $k$:
\begin{align}\label{28h}
|y_0+\omega_k(y')y_n|\lesssim \eps\text{ on the support of }t_k(y,\theta_0,\xi_n)|_{\theta_0=\frac{y_0}{\eps},\xi_n =\frac{y_n}{\eps}}.
\end{align}
In fact, there exists $M>0$ such that if $|y_0+\omega_k(y')y_n|\geq M\eps$, then the \emph{integrand} in \eqref{33d} vanishes for all $s\in\mathbb{R}$ when $\theta_0=\frac{y_0}{\eps},\xi_n =\frac{y_n}{\eps}$.

\end{lem}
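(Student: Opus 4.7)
The plan is to exploit the support conditions on the leading profiles $\sigma_m,\sigma_p$ (both supported in $|\theta_l|\le 1$) together with the uniform positive lower bound $|\omega_m(y')-\omega_p(y')|\ge 3\delta>0$ on $\Omega^0$, which follows from \eqref{phases} and \eqref{phases2}.   Setting $\theta_0=y_0/\eps$ and $\xi_n=y_n/\eps$ in the integrand of \eqref{33d} gives the two factors
$$\sigma_m\!\left(y,\,\tfrac{y_0+\omega_k(y')y_n}{\eps}+s(\omega_m(y')-\omega_k(y'))\right),\qquad \sigma_p\!\left(y,\,\tfrac{y_0+\omega_k(y')y_n}{\eps}+s(\omega_p(y')-\omega_k(y'))\right).$$
The key observation is that the arguments differ by $s(\omega_m-\omega_p)$, so if both factors are nonzero, subtracting the two support conditions forces a bound on $s$; then re-inserting this bound controls $(y_0+\omega_k y_n)/\eps$ uniformly in $y$ and $\eps$.

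In detail, the first step is to assume the integrand is nonzero at some $s$, which by the support assumptions on $\sigma_m$ and $\sigma_p$ yields
$$\left|\tfrac{y_0+\omega_k(y')y_n}{\eps}+s(\omega_m(y')-\omega_k(y'))\right|\le 1\quad\text{and}\quad \left|\tfrac{y_0+\omega_k(y')y_n}{\eps}+s(\omega_p(y')-\omega_k(y'))\right|\le 1.$$
Subtracting these inequalities gives $|s|\,|\omega_m(y')-\omega_p(y')|\le 2$.  Since $m\ne p$ and $y'$ lies in the projection of $\Omega^0$ onto $\{y_n=0\}$, the quantity $|\omega_m(y')-\omega_p(y')|$ is bounded below by $3\delta$ (uniformly in $y'$), hence $|s|\le 2/(3\delta)$.

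Substituting this $s$-bound back into the first inequality and using that $|\omega_m(y')-\omega_k(y')|$ is bounded above by a constant $C$ on $\Omega^0$, I obtain
$$\left|\tfrac{y_0+\omega_k(y')y_n}{\eps}\right|\le 1+|s|\,|\omega_m(y')-\omega_k(y')|\le 1+\tfrac{2C}{3\delta}:=M.$$
Therefore $|y_0+\omega_k(y')y_n|\le M\eps$, which is exactly the claim.  Contrapositively, if $|y_0+\omega_k(y')y_n|\ge M\eps$, then at least one of the two support conditions fails for every $s\in\mathbb{R}$, so the integrand vanishes pointwise in $s$, giving the second (stronger) statement.

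The only potential subtlety is to verify that for every $k\in\{1,\dots,N\}$ and every $m,p$ with $m\ne p$ appearing in the decomposition \eqref{33d}, the lower bound $|\omega_m-\omega_p|\ge 3\delta$ is available on $\Omega^0$ (rather than the weaker bound $|\omega_m-\omega_k|\ge 3\delta$ used in \eqref{w} to prove finiteness of the defining integral).  This holds because the $\omega_j(y')=\partial_n\psi_j(y',0)$ agree with $\partial_n\phi_j(y'',0)$ up to sign and are bounded away from each other by Lemma \ref{relations}(c)--(d) combined with \eqref{phases}(a) (the constants $\gamma_j$ are distinct eigenvalues and $\Omega^0$ was chosen small enough that $\partial_n\phi_j$ stays in a $\delta/3$-neighborhood of $\gamma_j$).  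So the argument goes through uniformly in $(k,m,p)$, completing the proof.
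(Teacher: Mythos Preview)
Your proof is correct and in fact more streamlined than the paper's. The paper splits into cases according to whether or not one of $m,p$ equals $k$: when both differ from $k$ (Case~1), it rewrites each support condition as $|s-\psi_{k,0}/[\eps(\omega_k-\omega_l)]|\lesssim 1$ for $l=m,p$ and argues these two $s$-intervals are disjoint once $|\psi_{k,0}|/\eps$ is large; when $p=k$ (Case~2), the factor $\sigma_p$ no longer depends on $s$ and directly forces $|\psi_{k,0}|\le\eps$. Your subtraction trick---bounding $|s|$ via $|s|\,|\omega_m-\omega_p|\le 2$ and then back-substituting---covers both cases at once, since it only uses $m\neq p$ and never requires dividing by $\omega_l-\omega_k$. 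The paper's case split is forced because it solves for $s$ from each inequality separately, which fails when the coefficient $\omega_p-\omega_k$ vanishes; your approach sidesteps this. The only minor imprecision is your phrase ``agree with $\partial_n\phi_j(y'',0)$ up to sign'': more directly, the eikonal equation at $y_n=0$ gives $\omega_j(y')=-\lambda_j(y',0,1,0,\dots,0)$, and \eqref{phases}(b) then yields $|\omega_j+\gamma_j|<\delta/3$, whence $|\omega_m-\omega_p|\ge|\gamma_m-\gamma_p|-2\delta/3>3\delta$ for $m\ne p$.
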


\begin{proof}
It suffices to show there exists $M>0$ such that if $|y_0+\omega_k(y')y_n|\geq M\eps$, then no $s\in \mathbb{R}$ lies in the $s-$support of both factors 
of the integrand in \eqref{33d} when they are evaluated at $\theta_0=\frac{y_0}{\eps},\xi_n =\frac{y_n}{\eps}$.

\textbf{Case 1: both $m$ and $p$ are $\neq k$.}
As before let $\psi_{k,0}=y_0+\omega_k(y')y_n$.  Suppose both $\omega_m$ and $\omega_p$ are $>\omega_k$.\footnote{It will be clear that the other possible orderings can be handled the same way.} On the $s-$support of the first factor we have\footnote{The $\omega_k$ are well separated on $\Omega^0$ by \eqref{phases} and \eqref{phases2}.}
\begin{align}\label{28i}
|\psi_{k,0}+s(\omega_m-\omega_k)\eps|\leq 1\Rightarrow \left|s-\frac{\psi_{k,0}/\eps}{\omega_k-\omega_m}\right|\lesssim 1.
\end{align}
Similarly, on the $s-$support of the second factor we have 
\begin{align}\label{28j}
\left|s-\frac{\psi_{k,0}/\eps}{\omega_k-\omega_p}\right|\lesssim 1.
\end{align}
These conditions on $s$ are incompatible if $M$ is chosen sufficiently large and $|\psi_{k,0}|\geq M\eps$.

\textbf{Case 2: $m\neq k$, $p=k$.}\footnote{The remaining cases are handled just like cases 1 and 2.} In this case we can rewrite 
\eqref{33d} as 
\begin{align}
c(y)\sigma_p\left(y,\theta_0+\omega_p(y')\xi_n)\right)\int^{\xi_n}_{+\infty}\sigma_m\left(y,\theta_0+\omega_k(y')\xi_n+s(\omega_m(y')-\omega_k(y'))\right) ds,
\end{align}
The result follows since the factor $\sigma_p\left(y,\frac{\psi_{p,0}}{\eps}\right)$ is supported in $|\psi_{p,0}|\leq \eps$.
\end{proof}

In the next lemma we use the wedge partition of unity $\{\chi_j\}$ defined in section \ref{O0}.\footnote{The  plausibility of Lemma \ref{28m} is clear from a simple picture.}  
\begin{lem}\label{28m}
For $y\in \Omega^0$ we have when $j\neq k$:
\begin{align}\label{28n}
|y_0+\omega_k(y')y_n|\leq M\eps\text{ and }y\in \mathrm{supp}\;\chi_j\Rightarrow |y_0|\lesssim \eps
\text{ and }|y_n|\lesssim \eps.
\end{align}
\end{lem}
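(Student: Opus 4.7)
The plan is to exploit the separation between the near-equality $\omega_k(y') \approx -\gamma_k$ given by \eqref{phases2}(d) and the ``forbidden direction'' encoded in $\mathrm{supp}\,\chi_j$ for $j \neq k$. The geometric picture is that the hypothesis $|y_0 + \omega_k(y')y_n| \leq M\eps$ pins $y_0/y_n$ close to $-\omega_k(y') \approx \gamma_k$, while $y \in \mathrm{supp}\,\chi_j$ with $j \neq k$ forces $y_0/y_n$ to stay a definite distance from $\gamma_k$; the only way to reconcile these two constraints is to make $y_n$ itself of size $O(\eps)$, after which a direct triangle inequality will yield the bound on $y_0$.

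Concretely, I would first use \eqref{phases2}(d): dividing $|-\omega_k(y')y_n - \gamma_k y_n| < \tfrac{\delta}{2} y_n$ by $y_n > 0$ gives $|\omega_k(y') + \gamma_k| \leq \delta/2$ on $\Omega^0$. Next, $y \in \mathrm{supp}\,\chi_j$ means $\tilde\chi_j(y_0/y_n) \neq 0$, and the partition property $\mathrm{supp}\,\tilde\chi_j \cap \overline{I_k} = \emptyset$ for $j \neq k$ (see \eqref{partition}) gives $|y_0/y_n - \gamma_k| > \delta$, equivalently $|y_0 - \gamma_k y_n| > \delta y_n$. Writing
$$y_0 - \gamma_k y_n \;=\; \bigl(y_0 + \omega_k(y')y_n\bigr) \;-\; \bigl(\omega_k(y') + \gamma_k\bigr) y_n$$
and applying the triangle inequality together with the hypothesis yields
$$\delta y_n \;<\; |y_0 - \gamma_k y_n| \;\leq\; M\eps + \tfrac{\delta}{2}\, y_n,$$
so $y_n < (2M/\delta)\eps$, i.e., $|y_n| \lesssim \eps$. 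The bound on $y_0$ is then immediate:
$$|y_0| \;\leq\; |y_0 + \omega_k(y')y_n| \;+\; |\omega_k(y')|\, y_n \;\lesssim\; \eps,$$
using that $\omega_k$ is bounded on the bounded set $\Omega^0$.

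I do not anticipate any serious obstacle. The only point worth double-checking is that the disjointness argument also covers the index $j = 0$ in the partition; this is built in, because $\tilde\chi_i = 1$ on $\overline{I_i}$ for $i = 1,\dots,N$ together with $\sum_{i \geq 0}\tilde\chi_i = 1$ forces $\tilde\chi_0$ to vanish identically on every $\overline{I_k}$, so the separation argument goes through uniformly in $j \neq k$. The boundary case $y_n = 0$, if it needs to be considered at all, is trivial since the hypothesis then reduces to $|y_0| \leq M\eps$.
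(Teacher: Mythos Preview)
Your proof is correct and follows essentially the same approach as the paper's. The paper argues by contrapositive (assuming $y_n \geq P\eps$ and showing $y$ then lies in $W_k'$, hence outside $\mathrm{supp}\,\chi_j$), whereas you multiply through by $y_n$ and apply the triangle inequality directly; these are two presentations of the same computation, and your handling of the $j=0$ case and the final bound on $|y_0|$ are both clean.
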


\begin{proof}
\textbf{1. }Recall the covering of $\{y_n> 0\}$ by overlapping wedges $W_j$, $j=0,1,\dots,N$ defined in section \ref{O0}.   There we also defined  smaller disjoint wedges $W_j'$ such that 
\begin{align}\label{28o}
\begin{split}
&(a) \Sigma_j\subset W_j'\subset\subset W_j \text{ for }j=1,\dots,N\\
&(b) W_j\cap \overline{W_k'}=\emptyset\text{ for }j\neq k,  \;j=0,\dots,N.
\end{split}
\end{align}
\textbf{2. }Let $\delta>0$ be as chosen in section \ref{O0} and choose $P>0$ such that $\frac{M}{P}<\frac{\delta}{2}$. 
Then
\begin{align}
\begin{split}
&|y_0+\omega_k(y')y_n|\leq M\eps \text{ and }|y_n|\geq P\eps\Rightarrow \left|\frac{y_0}{y_n}+\omega_k(y')\right|<\frac{\delta}{2}\Rightarrow\\
&\left|\frac{y_0}{y_n}-\gamma_k\right|=\left|\frac{y_0}{y_n}+\omega_k(y')-\gamma_k-\omega_k(y')\right|\leq \left|\frac{y_0}{y_n}+\omega_k(y')\right|+|\gamma_k+\omega_k(y')|<\frac {\delta}{2}+\frac{\delta}{2}=\delta.
\end{split}
\end{align}
For the last inequality we used \eqref{phases2}(d).  By definition of the wedges $W'_k$ this implies $y\in W_k'$, so by \eqref{28o}(b) we have $y\notin W_j=\mathrm{supp}\;\chi_j$.   Thus, if \eqref{28n} holds we must have $|y_n|\leq P\eps$, and thus also
$|y_0|\lesssim \eps$.

\end{proof}

In view of \eqref{33c} and \eqref{33d}, to estimate the $N^r(\Omega^0)$ norm of $W^\eps|_{\theta_0=\frac{y_0}{\eps},\xi_n=\frac{y_n}{\eps}}$ it suffices to estimate the $N^r(\Omega^0)$ norm of terms of the form
\begin{align}\label{28p}
T^\eps_{k,m,p}(y):=\chi^\eps(y_0,y_n)\; \left(\int^{\xi_n}_{+\infty}I_{k,m,p}(s;y,\theta_0,\xi_n)ds\right)|_{\theta_0=\frac{y_0}{\eps},\xi_n=\frac{y_n}{\eps}},  \;m\neq p, 
\end{align}
where $\int^{\xi_n}_{+\infty}I_{k,m,p}(s;y,\theta_0,\xi_n)ds$ denotes the integral \eqref{33d}.  %\footnote{The factor $y_n$ comes from $B_0(y)-B_0(y',0)$ in \eqref{28a}.} 
Here and below we ignore smooth functions like $c(y)$ in \eqref{33d}, which can have no significant effect on the estimates.   For a fixed choice of $k$ and $m\neq p$ we use the wedge partition of unity $\{\chi_j\}$, $\chi_j\in\Lambda(\Omega^0)$, constructed in section \ref{O0} to write
\begin{align}\label{28q}
T^\eps_{k,m,p}(y)=\sum^N_{j=0}\chi_j(y)T^\eps_{k,m,p}(y):=\sum_j T^\eps_{k,j}(y)
\end{align}
and proceed to estimate each $T^\eps_{k,j}$ in $N^r(\Omega^0,\mathcal{M}_j)$.   In fact, for $j\neq k$ Lemma \ref{28m} shows $T^\eps_{k,j}$ is supported in the interaction region $I_\eps$, and that will allow us to estimate $T^\eps_{k,j}$ in the stronger norm of $N^r(\Omega^0,\mathcal{M}_0)$.   We will do that first, and then estimate $T^\eps_{k,k}$ in 
$N^r(\Omega^0,\mathcal{M}_k)$.\footnote{The term  $T^\eps_{k,k}$ is not supported in the interaction region $I_\eps$.}

\begin{prop}\label{28r}
Consider $T^\eps_{j,k}$ as in \eqref{28q} when $j\neq k$. For $r\in \mathbb{N}_0$ we have
\begin{align}\label{28rr}
|T^\eps_{j,k}|_{N^r(\Omega^0,\mathcal{M}_0)}\lesssim 1.
\end{align}

\end{prop}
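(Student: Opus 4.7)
The plan is to imitate the proof of Proposition \ref{27za}, exploiting confinement of the support to the interaction region $I_\eps$. The payoff of Lemmas \ref{28g} and \ref{28m} is precisely that, when $j\neq k$, the product $\chi_j T^\eps_{k,m,p}$ vanishes outside $I_\eps=\{|y_0|\lesssim\eps,\ |y_n|\lesssim\eps\}$: Lemma \ref{28g} places $|y_0+\omega_k(y')y_n|\lesssim \eps$ on the support of $T^\eps_{k,m,p}$, and Lemma \ref{28m} upgrades this to $|y_0|+|y_n|\lesssim \eps$ as soon as we multiply by $\chi_j$. On $I_\eps$, $y_0/\eps$ and $y_n/\eps$ are uniformly bounded, which will recover the $L^\infty$ control one would otherwise lose when $\mathcal{M}_0$ vector fields hit the ``fast'' arguments.

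First I would write
\[
T^\eps_{j,k}(y)=\chi_j(y)\,\chi^\eps(y_0,y_n)\,J(y,y_0/\eps,y_n/\eps),
\]
packaging the integral \eqref{33d} as $J(y,\theta_0,\xi_n)$. By Remark \ref{d10yy}, $J\in C^\infty(\Omega^0\times\mathbb{R}^2_{\theta_0,\xi_n})$ with derivatives bounded on bounded subsets in $(\theta_0,\xi_n)$. This packaging avoids having to differentiate under the integral at each step.

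The key step is then to show that each generator $V$ of $\mathcal{M}_0$ preserves a manageable structural form. Iterated $V$-derivatives of $\chi_j$ remain uniformly bounded because $\chi_j\in\Lambda(\Omega^0)$. For $\chi^\eps=\chi(y_0/\sqrt{\eps})\chi(y_n/\sqrt{\eps})$ I would verify directly that each of $y_0\partial_0,\ y_0\partial_n,\ y_n\partial_0,\ y_n\partial_n,\ \partial_l$ sends $\chi^\eps$ into finite sums of products $F_a(y_0/\sqrt{\eps})F_b(y_n/\sqrt{\eps})$ with $F_a,F_b\in C^\infty_c(\mathbb{R})$, so by induction the same form persists after any composition, with $L^\infty$ bound independent of $\eps$. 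For the composite $J(y,y_0/\eps,y_n/\eps)$ I would establish, by induction on $|\gamma|$ exactly as in \eqref{27c}, the formula
\[
(V_1,\dots,V_{n+3})^\gamma J\!\left(y,\tfrac{y_0}{\eps},\tfrac{y_n}{\eps}\right)=\sum_{L=0}^{|\gamma|}\sum_{|\alpha|=L} J_{L,\alpha}\!\left(y,\tfrac{y_0}{\eps},\tfrac{y_n}{\eps}\right)\frac{(y_0,y_n)^\alpha}{\eps^L},
\]
with each $J_{L,\alpha}$ inheriting the boundedness properties of $J$.

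Combining these three facts via Leibniz, every iterated $\mathcal{M}_0$-derivative of $T^\eps_{j,k}$ of order at most $r$ is a finite sum of terms $G(y)\,J_{L,\alpha}(y,y_0/\eps,y_n/\eps)\,(y_0,y_n)^\alpha/\eps^L$ with $G$ uniformly bounded, all supported in $I_\eps$. On $I_\eps$ we have $|(y_0,y_n)^\alpha/\eps^L|\lesssim 1$ when $|\alpha|=L$ and $|y_0/\eps|,|y_n/\eps|\lesssim 1$, so every term is $O(1)$ in $L^\infty(\Omega^0)$; the bound \eqref{28rr} then follows from \eqref{44a}. The step I expect to be the main obstacle is the $\chi^\eps$ verification: the $\mathcal{M}_0$ generators are scale-balanced precisely to absorb the $1/\sqrt{\eps}$ factor produced when $\chi'$ appears, and it is essential that one always encounters the compound combinations $y_0\partial_0$, $y_n\partial_n$, $y_0\partial_n$, $y_n\partial_0$ rather than bare $\partial_0$ or $\partial_n$, which would produce unbounded $1/\sqrt{\eps}$ factors.
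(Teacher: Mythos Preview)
Your proof is correct and follows essentially the same route as the paper: support in $I_\eps$ via Lemmas \ref{28g} and \ref{28m}, the structural formula \eqref{27c} for $\mathcal{M}_0$-derivatives of $J(y,y_0/\eps,y_n/\eps)$, and the $\Lambda(\Omega^0)$ property of $\chi_j$. The one difference is that you keep the cutoff $\chi^\eps$ and carry out a separate inductive verification that its $\mathcal{M}_0$-derivatives stay bounded; the paper instead observes at the outset that since $\chi_j\,J(y,y_0/\eps,y_n/\eps)$ is already supported in $I_\eps=\{|y_0|\lesssim\eps,\ |y_n|\lesssim\eps\}$, and $\chi^\eps\equiv 1$ there for $\eps$ small, the cutoff can simply be removed before differentiating. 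This makes your ``main obstacle'' disappear: the potentially dangerous $1/\sqrt{\eps}$ factors from $\chi'$ never arise because derivatives of $\chi^\eps$ vanish identically on $I_\eps$. Your argument is not wrong, just longer than necessary.
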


\begin{proof}
Since  $T^\eps_{k,j}$ has support in $I_\eps$, it is unchanged when the cutoff $\chi^\eps(y_0,y_n)$ is removed, so we will ignore that cutoff in this proof.%\footnote{We use $\frac{\beta_+}{2} <\beta$ here; recall \eqref{29aa}.}
\footnote{Here we use $\eps\in (0,\eps_0]$ for some small enough $\eps_0<1$.}

We can write
\begin{align}\label{28s}
T^\eps_{k,j}(y)=\chi_j(y) f(y,\frac{y_0}{\eps},\frac{y_n}{\eps}),
\end{align}
where $\chi_j(y)\in \Lambda(\Omega^0)$ and by Remark \ref{d10yy},  $f(y,\theta_0,\xi_n)$ is a smooth function of its arguments with derivatives that are bounded on bounded subsets of $\Omega^0\times\mathbb{R}^2_{\theta_0,\xi_n}$.   We  now argue as in the proof of Proposition \ref{27za}.   In fact,  %In place of \eqref{27b} we use
%\begin{align}\label{28ss}
%(V_1,\dots,V_{n+3})^\gamma y_n=C(y_0,y_n)^\kappa \text{ where }|\kappa|=1 \text{ and }|\gamma|=r.
%\end{align}
we can use \eqref{27d} directly:
\begin{align}\label{28t}
\left|(V_1,\dots,V_{n+3})^\gamma\left[f\left(y,\frac{y_0}{\eps},\frac{y_n}{\eps}\right)\right]\right|_{N^r(\Omega^0\cap I_\eps,\mathcal{M}_0)}\lesssim 1,\;\;|\gamma|=r.
\end{align}
%Since $|(y_0,y_n)^\kappa|\lesssim \eps$ in $I_\eps$, 
The estimate \eqref{28rr} now follows from  \eqref{28t} and the definition of $\Lambda(\Omega^0)$, Definition \ref{regul}.

\end{proof}

The most challenging term to estimate is the $T^\eps_{k,k}$ term.

\begin{prop}\label{30a}
Consider $T^\eps_{k,k}$ as in \eqref{28q}. For $r\in \mathbb{N}_0$ we have
\begin{align}\label{30b}
|T^\eps_{k,k}|_{N^r(\Omega^0,\mathcal{M}_k)}\lesssim 1.
\end{align}
\end{prop}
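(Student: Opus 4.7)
The plan is to put $T^\eps_{k,k}$ into closed form by evaluating the inner integral in \eqref{33d} via a change of variable, and then to show by induction on $|\gamma|$ that $|M^\gamma T^\eps_{k,k}|_{L^\infty(\Omega^0)}\lesssim 1$ for every word $M^\gamma$ of length $\leq r$ in the generators of $\mathcal{M}_k$ from Definition \ref{43a}(a); the bound \eqref{30b} then follows from \eqref{44a}.

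First, since the $\omega_j$ are uniformly separated on $\Omega^0$ by \eqref{phases}-\eqref{phases2}, setting $u=\theta_0+\omega_k\xi_n+s(\omega_m-\omega_k)$ converts the integral in \eqref{33d} into
\begin{align*}
\frac{c(y)}{\omega_m-\omega_k}\int^{\theta_0+\omega_m\xi_n}_{+\infty}\sigma_m(y,u)\,\sigma_p\!\left(y,\,u\mu(y')+(\theta_0+\omega_k\xi_n)\nu(y')\right)du,
\end{align*}
with $\mu=(\omega_p-\omega_k)/(\omega_m-\omega_k)$ and $\nu=1-\mu$. Setting $\Psi^\eps_j(y):=\psi_{j,0}(y)/\eps=(y_0+\omega_j(y')y_n)/\eps$ and evaluating at $\theta_0=y_0/\eps$, $\xi_n=y_n/\eps$, every contribution to $T^\eps_{k,k}$ takes the form
\begin{align*}
T^\eps_{k,k}(y)=\chi_k(y)\,\chi^\eps(y_0,y_n)\,\mathcal{G}\bigl(y,\Psi^\eps_m(y),\Psi^\eps_k(y)\bigr),
\end{align*}
where $\mathcal{G}(y,\zeta_m,\zeta_k)\in C^\infty_b$ is uniformly bounded on $\Omega^0\times\mathbb{R}^2$ and, crucially, $\partial_{\zeta_m}\mathcal{G}$ is compactly supported in $\zeta_m$ (being proportional to $\sigma_m(y,\zeta_m)$ times a bounded factor); when $p\neq k,m$ the separation of $\omega_p$ from $\omega_k$ similarly forces $\partial_{\zeta_k}\mathcal{G}$ to be compactly supported in $\zeta_k$.

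The central analytic input is that on $\mathrm{supp}\,T^\eps_{k,k}$, every word $M^\gamma$ gives $|M^\gamma\Psi^\eps_k|\lesssim 1$ and $|M^\gamma\chi^\eps|\lesssim 1$. This rests on the identity
\begin{align*}
\omega_k(y')+\partial_n\phi_k(y'',0)=y_0\,\tilde h(y'),
\end{align*}
which follows from the eikonal equation \eqref{a4} together with $\phi_k(y'',0)=0$ and $d_{y''}\phi_k(y'',0)=0$ (both consequences of $\psi_k|_{y_n=0}=y_0$). A Taylor expansion of $\phi_k$ in $y_n$ then yields $\omega_k(y')y_n+\phi_k(y'',y_n)=O(y_0 y_n)+O(y_n^2)$, so $\psi_{k,0}=(y_0-\phi_k)+O(y_0 y_n)+O(y_n^2)$. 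Because Lemma \ref{28g} gives $|\psi_{k,0}|\lesssim\eps$ on $\mathrm{supp}\,T^\eps_{k,k}$ while $\chi^\eps$ enforces $|y_0|,|y_n|\lesssim\sqrt{\eps}$, one obtains $|y_0-\phi_k|\lesssim\eps$ there; a direct check on each generator in Definition \ref{43a}(a) (for instance $M_0\Psi^\eps_k=(y_0-\phi_k)/\eps$ and $M_\nu\Psi^\eps_k=\partial_\nu(\omega_k y_n+\phi_k)/\eps=O((y_0 y_n+y_n^2)/\eps)$) delivers $|M\Psi^\eps_k|\lesssim 1$, and a straightforward iteration handles all $\gamma$. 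The bound $|M^\gamma\chi^\eps|\lesssim 1$ follows analogously, using in addition $\partial_\nu\phi_k(y'',0)=0$ to tame $M_\nu\chi^\eps$.

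The main obstacle is the control of the derivatives of $\Psi^\eps_m$: the tangency of $\mathcal{M}_k$ to $\Sigma_k$ does \emph{not} give tangency to $\Sigma_m$, so $M\Psi^\eps_m$ cannot be bounded pointwise. The resolution is that, by the chain rule, every term in $M^\gamma(\chi_k\chi^\eps\mathcal{G}(y,\Psi^\eps_m,\Psi^\eps_k))$ carrying a factor $M^{\gamma'}\Psi^\eps_m$ with $\gamma'\ne 0$ also carries a $\partial_{\zeta_m}\mathcal{G}$ (or higher), whose compact $\zeta_m$-support forces $|\Psi^\eps_m|\lesssim 1$, i.e.\ $|\psi_{m,0}|\lesssim\eps$. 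Combined with $|\psi_{k,0}|\lesssim\eps$ this gives $|(\omega_m-\omega_k)y_n|\lesssim\eps$, so $|y_n|\lesssim\eps$ and $|y_0|\lesssim\eps$: we are in the interaction region $I_\eps$ of Definition \ref{d8}, where the argument used for Propositions \ref{27za} and \ref{28r} bounds $M^{\gamma'}\Psi^\eps_m$ uniformly. Induction on $|\gamma|$ then shows that $M^\gamma T^\eps_{k,k}$ is a finite sum of uniformly bounded functions on $\Omega^0$ (using also $\chi_k\in\Lambda(\Omega^0)$), whence \eqref{30b} via \eqref{44a}.
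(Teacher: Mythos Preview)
Your proof is correct and rests on the same analytic ingredients as the paper's: Lemma \ref{28g} to confine $|\psi_{k,0}|\lesssim\eps$ on the support, the identity \eqref{30gg} (equivalent to your formula for $\omega_k+\partial_n\phi_k(y'',0)$), the cutoff $\chi^\eps$ to land in $I_{\sqrt{\eps}}$, and localization to $I_\eps$ for the ``bad'' terms. The genuine difference is organizational. The paper keeps the integral \eqref{33d} and differentiates under it, obtaining two types of terms: integrals like \eqref{30i} (handled on $I_{\sqrt{\eps}}$ via the computation \eqref{30h} and the change of variable \eqref{30j}), and boundary contributions \eqref{30k} from $M_n,M_{n+1}$ hitting the upper limit $y_n/\eps$ (supported in $I_\eps$). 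You instead perform the change of variable \eqref{30j} first, producing the closed form $\mathcal{G}(y,\Psi^\eps_m,\Psi^\eps_k)$; then Fa\`a di Bruno cleanly separates terms with no $\partial_{\zeta_m}$ (your analogue of \eqref{30i}) from terms carrying $\partial_{\zeta_m}$ (your analogue of \eqref{30k}, localized to $I_\eps$ by the compact $\zeta_m$-support of $\partial_{\zeta_m}\mathcal{G}$). Your packaging makes the dichotomy more transparent and avoids tracking powers of $s$; the paper's version is more hands-on but introduces the reusable algebras $\Lambda^\eps(\Omega^0)$, $\Lambda^{\sqrt{\eps}}(\Omega^0)$ that it needs again in Propositions \ref{L20a} and \ref{31b}.

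One small technical looseness: the claim $\mathcal{G}\in C^\infty_b(\Omega^0\times\mathbb{R}^2)$ is not literally true, since $y$-derivatives of $\mathcal{G}$ pick up linear growth in $\zeta_k$ (from $\zeta_k\partial_{y'}\nu$ when differentiating under the integral). This is harmless because $|\Psi^\eps_k|\lesssim 1$ on $\mathrm{supp}\,T^\eps_{k,k}$ by Lemma \ref{28g}, so polynomial $\zeta_k$-growth is absorbed; but the statement should read ``$\mathcal{G}$ and its derivatives have at most polynomial growth in $\zeta_k$'' rather than $C^\infty_b$. Also, in your change of variable you tacitly assume $m\neq k$; if $m=k$ one simply swaps the roles of $m$ and $p$ (one of them is $\neq k$ since $m\neq p$).
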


In preparation for the proof we recall:
\begin{align}\label{30bb}
\begin{split}
&T^\eps_{k,k}(y)=\chi^\eps(y_0,y_n)\chi_k(y)\cdot  \\
&\quad \int^{\frac{y_n}{\eps}}_{+\infty}\sigma_m\left(y,\frac{\psi_{k,0}}{\eps}+s(\omega_m(y')-\omega_k(y'))\right)\sigma_p\left(y,\frac{\psi_{k,0}}{\eps}+s(\omega_p(y')-\omega_k(y'))\right) ds\\
&\qquad  :=\chi^\eps(y_0,y_n)\chi_k(y)\cdot t^\eps_{k,k}(y).
\end{split}
\end{align}
As before we  denote the generators of $\mathcal{M}_k$ by $M_0,\dots,M_{n+1}$.   They are: 
%\begin{align}\label{30c}
%(y_0-\phi_k)\partial_0,\;(y_0-\phi_k)\partial_n,\; y_n(\partial_n+\partial_n\phi_k\partial_0),\;\partial_\nu+\partial_\nu\phi_k\partial_0, \nu=1,\dots,n-1.
%\end{align}

$M_0=(y_0-\phi_k)\partial_0$

$M_\nu=\partial_\nu+(\partial_\nu\phi_k)\partial_0, \nu=1,\dots,n-1$

$M_n=y_n(\partial_n+(\partial_n\phi_k)\partial_0)$%+M^0_j.$

$M_{n+1}=(y_0-\phi_k)\partial_n$.\\

From Lemma \ref{28g} and the definition of $\chi^\eps$, we obtain that\footnote{Use \eqref{28k}(c) to see that $|y-\phi_k|\lesssim \eps$ on the support of $T^\eps_{k,k}$.}
\begin{align}\label{30c}
\begin{split}
&\mathrm{supp}\;T^\eps_{k,k}(y) \subset I_{\sqrt{\eps}}:=\{y:|y-\phi_k|\lesssim \eps, |y_0|\lesssim\sqrt{\eps}, 0\leq y_n\lesssim \sqrt{\eps}\}\text{ and thus  }\\
&\quad \mathrm{supp}\;(M_0,\dots,M_{n+1})^\gamma T^\eps_{k,k}(y) \subset I_{\sqrt{\eps}} \text{ for any multi-index }\gamma.
\end{split}
\end{align}

\begin{defn}\label{30d}

Let 

(a)  $\Lambda^\eps(\Omega^0)=\{A^\eps\in C^\infty(\Omega^0, \mathbb{R}): |(M_0,\dots,M_{n+1})^\gamma A^\eps|_{L^\infty(\Omega^0\cap I_\eps)}\leq C_\gamma \text{ for }\eps\in (0,1], \gamma\in \mathbb{N}_0^{n+2}\}$
(b) $\Lambda^{\sqrt{\eps}}(\Omega^0)=\{B^\eps\in C^\infty(\Omega^0, \mathbb{R}): |(M_0,\dots,M_{n+1})^\gamma B^\eps|_{L^\infty(\Omega^0\cap I_{\sqrt{\eps}})}\leq C_\gamma \text{ for }\eps\in (0,1], \gamma\in \mathbb{N}_0^{n+2}\}.$
\end{defn}

\begin{rem}\label{30e}
\textup{The following  properties of these spaces are easy to check.}

\textup{1) Each of the spaces in Definition \ref{30d} is mapped to itself by the generators $V_j$ $j=0,\dots,n+1$.}

\textup{2) We have $\Lambda^{\sqrt{\eps}}(\Omega^0)\subset \Lambda^\eps(\Omega^0)$. }

\textup{3) Smooth functions $c(y)$ and functions of the forms $\frac{(y_0,y_n)^\kappa}{\eps}$ with $|\kappa|=2$ or $\frac{y_0-\phi_k}{\eps}$ belong to $\Lambda^{\sqrt{\eps}}(\Omega^0)$.   The cutoff $\chi^\eps(y_0,y_n)\in \Lambda^{\sqrt{\eps}}(\Omega^0).$}

\textup{4) The functions $\frac{y_n}{\eps}, \frac{y_0}{\eps}\in \Lambda^\eps(\Omega^0)\setminus \Lambda^{\sqrt{\eps}}(\Omega^0)$.}

\textup{5)  If $A_1^\eps, A^\eps_2\in \Lambda^\eps(\Omega^0)$, then the product $A_1^\eps\cdot A^\eps_2\in \Lambda^\eps(\Omega^0)$.
Similarly, if $B_1^\eps, B^\eps_2\in \Lambda^{\sqrt{\eps}}(\Omega^0)$, then $B_1^\eps\cdot B^\eps_2\in \Lambda^{\sqrt{\eps}}(\Omega^0)$.}
\end{rem}

\begin{proof}[Proof of Proposition \ref{30a}]
\textbf{1. }Since $\chi_k(y)\in\Lambda(\Omega^0)$, $\chi^\eps(y_0,y_n)\in\Lambda^{\sqrt{\eps}}(\Omega^0)$, and  \eqref{30c} holds,  in order to prove \eqref{30b} it will suffice to prove for $t^\eps_{k,k}$ as in \eqref{30bb} that 
%\begin{align}\label{30f}
%|y_n t^\eps_{k,k}|_{N^r(\Omega_T\cap I_{\sqrt{\eps}},\mathcal{M}_k)}\lesssim \sqrt{\eps}.
%\end{align}
%Now a simple induction on $|\gamma|$ shows $(V_0,\dots,V_{n+1})^\gamma y_n=(c_\gamma(y)(y_0-\phi_k),d_\gamma(y)y_n)^\kappa$ for some smooth functions $c_\gamma$, $d_\gamma$ and multi-index $\kappa$ such that $|\kappa|=1$.  We have
%$$|(c_\gamma(y)(y_0-\phi_k),d_\gamma(y)y_n)^\kappa|_{L^\infty(\Omega^0\cap I_{\sqrt{\eps}})}\lesssim \sqrt{\eps},$$
%so to prove \eqref{30b} it will suffice to prove
\begin{align}\label{30g}
|(M_0,\dots,M_{n+1})^\gamma t^\eps_{k,k}|_{L^\infty(\Omega^0\cap I_{\sqrt{\eps}})}\lesssim 1 \text{ for any }\gamma.
\end{align}

\textbf{2. }To motivate the rest of the argument we examine the action of $M_\nu$ and $M_0$ on the second argument of $\sigma_m$ in \eqref{30bb}.  Below we let  $c(y)$ denote a smooth function that can change from term to term.  With $\psi_{k,0}(y):=y_0+\omega_k(y')y_n$ we see from \eqref{28k}(c) that 
\begin{align}\label{30gg}
\psi_{k,0}(y)=y_0-\phi_k(y'',y_n)+c(y)y_n^2+c(y)y_0y_n.
\end{align}
Using \eqref{30gg} and $\partial_\nu \phi_k(y'',y_n)=O(y_n)$, we compute\footnote{Note that $M_\nu (y_0-\phi_k)=(\partial_\nu+\partial_\nu\phi_k\partial_0)(y_0-\phi_k)=0$.  Having $\frac{y_n}{\eps}$ on the right in \eqref{30h}(a) would not be good enough.
The cutoff $\chi^\eps(y_0,y_n)$  enforces boundedness of $\frac{y_n^2}{\eps}$ and $\frac{y_0y_n}{\eps}.$}
\begin{align}\label{30h}
\begin{split}
&(a) (\partial_\nu+\partial_\nu\phi_k\partial_0)\left[\frac{\psi_{k,0}}{\eps}+s(\omega_m(y')-\omega_k(y'))\right]=c(y)\frac{y_n^2}{\eps}+c(y)\frac{y_0y_n}{\eps}+sc(y)\\
&(b) (y_0-\phi_k)\partial_0 \left[\frac{\psi_{k,0}}{\eps}+s(\omega_m(y')-\omega_k(y'))\right]=c(y)\frac{y_0-\phi_k}{\eps}+sc(y).
\end{split}
\end{align}
In each case we obtain a function of the form $B^\eps+sB^\eps$, where here and below we let $B^\eps$ denote an element of $\Lambda^{\sqrt{\eps}}(\Omega^0)$  that may change from term to term.
We get  results of the same  form if $M_n$ or $M_{n+1}$ are used here in place of $M_0$ and $M_\nu$. 

\textbf{3. }Suppose first that we let $(M_0,\dots,M_{n+1})^\gamma$ act \emph{only under the integral sign} in the expression for $t^\eps_{k,k}$.  Step \textbf{2} and an induction on $|\gamma|$ show that we obtain a finite sum of terms of the form. $J^\eps(y):=$
\begin{align}\label{30i}
\int^{\frac{y_n}{\eps}}_{+\infty}(\partial^\alpha_{y,\theta_m}\sigma_m)\left(y,\frac{\psi_{k,0}}{\eps}+s(\omega_m(y')-\omega_k(y'))\right)(\partial^\beta_{y,\theta_p}\sigma_p)\left(y,\frac{\psi_{k,0}}{\eps}+s(\omega_p(y')-\omega_k(y'))\right) B^\eps(y)s^lds
\end{align}
for some $l\in\mathbb{N}_0$ and multi-indices $\alpha,\beta$.   To see that $|J^\eps(y)|\lesssim 1$ make the change of variable\footnote{Here we use that at least one of $m$, $p$ is different from $k$, say $m$.}
\begin{align}\label{30j}
t=\frac{\psi_{k,0}}{\eps}+s(\omega_m(y')-\omega_k(y'), \text{ so }s=\frac{t-\frac{\psi_{k,0}}{\eps}}{\omega_m-\omega_k}\text{ and }ds=\frac{dt}{\omega_m-\omega_k}.
\end{align}
Since $|t|\leq 1$ on the support of $(\partial^\alpha_{y,\theta_m}\sigma_m)(y,t)$ and the integrand of \eqref{30i} is nonvanishing only when $|\frac{\psi_{k,0}}{\eps}|\lesssim 1$ (by Lemma \ref{28g}), we have $|s|\lesssim 1$ on the support of that integrand.

\textbf{4. }Only the vector fields $M_n, M_{n+1}$ differentiate the upper limit of integration $\frac{y_n}{\eps}$ in \eqref{30i}.   For example, when $M_n=y_n(\partial_n+\partial_n\phi_k)\partial_0$ is applied to an integral of the form \eqref{30i}, we get an additional term of the form 
\begin{align}\label{30k}
K^\eps(y):=(\partial^\alpha_{y,\theta_m}\sigma_m)\left(y,\frac{\psi_{m,0}}{\eps}\right)(\partial^\beta_{y,\theta_p}\sigma_p)\left(y,\frac{\psi_{p,0}}{\eps}\right) B^\eps(y)\left(\frac{y_n}{\eps}\right)^{l+1}.
\end{align}
Observe that the product of the first two factors \emph{has support in} $I_\eps$ by Prop. \ref{suppprop}(b) and $\frac{y_n}{\eps}\in \Lambda^{{\eps}}(\Omega^0)$. 
The arguments used in the proof of Proposition \ref{27za}, in particular \eqref{27d}, show that 
\begin{align}\label{30l}
\left|(M_0,\dots,M_{n+1})^\gamma \left[(\partial^\alpha_{y,\theta_m}\sigma_m)\left(y,\frac{\psi_{m,0}}{\eps}\right)(\partial^\beta_{y,\theta_p}\sigma_p)\left(y,\frac{\psi_{p,0}}{\eps}\right)\right]\right|_{L^\infty(\Omega^0\cap I_{{\eps}})}\lesssim 1,
\end{align}
and Remark \ref{30e} implies\footnote{Note that $B^\eps$ and $\left(\frac{y_n}{\eps}\right)^{l+1}$ lie in $\Lambda^\eps(\Omega^0)$.}
\begin{align}\label{30m}
\left|(M_0,\dots,M_{n+1})^\gamma \left(B^\eps(y) \left(\frac{y_n}{\eps}\right)^{l+1}\right)\right|_{L^\infty(\Omega^0\cap I_{{\eps}})}\lesssim 1.
\end{align}
Since $K^\eps$ has support in $I_\eps$, the estimates \eqref{30l} and \eqref{30m} imply
\begin{align}\label{30n}
|(M_0,\dots,M_{n+1})^\gamma K^\eps|_{L^\infty(\Omega^0\cap I_{\sqrt{\eps}})}\lesssim 1 \text{ for any }\gamma.
\end{align}
The same kind of argument treats the additional term that arises when $M_{n+1}$ is applied to the upper limit of integration in \eqref{30i}. 
With the estimate of $J^\eps$ in step \textbf{3},  this finishes the proof of \eqref{30g} and Proposition \ref{30a}.
\end{proof}

\begin{prop}\label{36a}
For any $r\in\mathbb{N}_0$ we have 
\begin{align}\label{36b}
\left\langle W^\eps\left(y',0,\frac{y_0}{\eps},0\right)\right\rangle_{N^r(b\Omega^0)}\lesssim 1.
\end{align}
\end{prop}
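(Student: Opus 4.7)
The plan is to adapt the argument of Proposition \ref{30a} to the boundary, where the analysis simplifies considerably because at $y_n=0$ we have $\psi_{k,0}=y_0$ and $\xi_n$ is set to equal $y_n/\eps = 0$. First I would write
\[
W^\eps(y',0,y_0/\eps,0)=\chi(y_0/\sqrt{\eps})\sum_k t_k(y',0,y_0/\eps,0)\,r_k(y',0),
\]
and apply Lemma \ref{28g} at $y_n=\xi_n=0$ to conclude that each boundary trace $t_k(y',0,y_0/\eps,0)$ is supported in $bI_\eps:=b\Omega^0\cap\{|y_0|\lesssim\eps\}$. In particular the cutoff $\chi(y_0/\sqrt{\eps})$ equals $1$ on this support for small $\eps$ and can be discarded, and the Lebesgue measure of $bI_\eps$ is $\lesssim\eps$, so it will suffice to prove uniform $L^\infty(bI_\eps)$ bounds on all iterated tangential derivatives $(m_0,\dots,m_{n-1})^\gamma t_k$, $|\gamma|\leq r$.

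Next I would introduce a boundary analogue $b\Lambda^\eps(b\Omega^0)$ of the space $\Lambda^\eps(\Omega^0)$ of Definition \ref{30d}, using the generators $m_0=y_0\partial_0$ and $m_\nu=\partial_\nu$: smooth functions on $b\Omega^0$ bounded along with all iterated applications of the $m_\mu$ on $bI_\eps$. Smooth $\eps$-independent functions of $y'$ lie in $b\Lambda^\eps$, as does $y_0/\eps$ (bounded by a constant on $bI_\eps$), and the class is closed under products and under the generators. Using \eqref{33d} restricted to $y_n=\xi_n=0$, each $t_k(y',0,y_0/\eps,0)$ is a finite sum of integrals of the form $c(y',0)\int_0^\infty\sigma_m(y',0,y_0/\eps+s(\omega_m(y')-\omega_k(y')))\,\sigma_p(y',0,y_0/\eps+s(\omega_p(y')-\omega_k(y')))\,ds$ with $m\neq p$. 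The actions of $m_0$ and $m_\nu$ on a shift $y_0/\eps+s(\omega_j(y')-\omega_k(y'))$ produce, respectively, a factor $y_0/\eps$ and a factor $s\cdot\partial_\nu(\omega_j-\omega_k)$, i.e.\ elements of $b\Lambda^\eps+s\cdot b\Lambda^\eps$; an induction on $|\gamma|$ then shows that $(m_0,\dots,m_{n-1})^\gamma t_k(y',0,y_0/\eps,0)$ is a finite sum of integrals
\[
\int_0^{+\infty}(\partial^\alpha_{y',\theta_m}\sigma_m)(y',0,y_0/\eps+s(\omega_m-\omega_k))\,(\partial^\beta_{y',\theta_p}\sigma_p)(y',0,y_0/\eps+s(\omega_p-\omega_k))\,B^\eps(y')\,s^l\,ds
\]
with $B^\eps\in b\Lambda^\eps$ and $l\in\mathbb{N}_0$.

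To bound such integrals uniformly I would change variables (WLOG $m\neq k$) via $t=y_0/\eps+s(\omega_m(y')-\omega_k(y'))$; the support of $\sigma_m(\cdot,t)$ in $t$ is $|t|\leq 1$, and on the full joint support of the integrand the separation of the $\omega_j$ together with the Case~1 analysis in Lemma \ref{28g} forces $|y_0/\eps|\lesssim 1$. Hence $|s|\lesssim 1$ on the support, the factor $s^l$ is harmless, and the integral is bounded by a constant independent of $\eps$ and $y'$. Combining with $|bI_\eps|\lesssim\eps$ yields $L^2(b\Omega^0)$ bounds of order $\sqrt{\eps}\lesssim 1$, from which \eqref{36b} follows after summing over $k$ and invoking the smoothness of $r_k(y',0)$. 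The main obstacle, as in Proposition \ref{30a}, is tracking the proliferating $s^l$ factors produced when the generators strike the shifts under the integral sign; the key observation taming them is the uniform boundedness of the $s$-support, which rests on the compact $\theta$-support of the profiles $\sigma_m,\sigma_p$ together with Lemma \ref{28g} restricted to the boundary.
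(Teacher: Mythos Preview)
Your proof is correct, but it follows the route of Proposition~\ref{30a} (opening up the integral formula for $t_k$ and tracking the $s^l$ factors), whereas the paper instead points to the simpler route of Proposition~\ref{28r}. The key difference is this: at the boundary $y_n=\xi_n=0$ you have already shown, via Lemma~\ref{28g}, that the whole expression is supported in $bI_\eps=\{|y_0|\lesssim\eps\}$. On that set $\theta_0=y_0/\eps$ is uniformly bounded, so by Remark~\ref{d10yy} the function $t_k(y',0,y_0/\eps,0)$ is simply $f(y',y_0/\eps)$ with $f$ smooth and bounded together with all derivatives on bounded subsets of $b\Omega^0\times\mathbb{R}_{\theta_0}$. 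One can then argue exactly as in \eqref{27c}--\eqref{27d}, replacing $\mathcal{M}_0$ by $\mathcal{M}'$ and $I_\eps$ by $bI_\eps$ (this is the set $bJ_\eps$ of \eqref{32dz}); no wedge cutoffs $\chi_j$ are needed since there is nothing to localize in $y_n$. Your integral-level analysis re-derives this conclusion by hand, which is fine but heavier than necessary: the whole point of the more delicate argument in Proposition~\ref{30a} was to cope with the region where $y_n/\eps$ is \emph{not} bounded (only $\lesssim 1/\sqrt{\eps}$), a difficulty that simply does not arise here. What your approach does buy is that it avoids invoking Remark~\ref{d10yy} as a black box, and makes the uniform bound on the $s$-support explicit.
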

\begin{proof}
We have $W^\eps(y',0,\frac{y_0}{\eps},0)=\chi(\frac{y_0}{\sqrt{\eps}})\sum_k t_k(y',0,\frac{y_0}{\eps},0)r_k(y',0)$, and by Lemma
\ref{28g}  $|y_0|\lesssim\eps$ on $\mathrm{supp}\;t_k(y',0,\frac{y_0}{\eps},0)$.   Thus, the cutoff $\chi(\frac{y_0}{\sqrt{\eps}})$ may be ignored here.
An argument parallel to the proof of Proposition \ref{28r}, but simpler, now gives \eqref{36b}.  There is no need to use the cutoffs $\chi_j$, the set $bJ_\eps$ \eqref{32dz} is used in place of $I_\eps$ in \eqref{28t}, and $\mathcal{M}'$ is used in place of $\mathcal{M}_0$.

\end{proof}

\subsection{Estimate of the remainder $r^\eps_a$.} \label{ra}

Recall from \eqref{d15} that $r^\eps_a=\mathcal{E}_0^\eps+\eps\mathcal{E}_1^\eps$.   In Proposition \ref{27za} we have already estimated the contribution to $\mathcal{E}_0^\eps$ given by the $H_2-\mathcal{H}_2$ term.  Next we estimate the other contribution to $\mathcal{E}_0^\eps$.

\subsubsection{Estimate of $[(\mathcal{L}_2-\mathcal{L}_{2,0})W^\eps]|_{\theta_0=\frac{y_0}{\eps},\xi_n=\frac{y_n}{\eps}}$.}\label{L20}
  
  With the estimate of $W^\eps$ in hand it is now not hard to prove:
  \begin{prop}\label{L20a}
  For every $r\in\mathbb{N}_0$ we have
  \begin{align}\label{L20b}
  \left|[(\mathcal{L}_2-\mathcal{L}_{2,0})W^\eps]|_{\theta_0=\frac{y_0}{\eps},\xi_n=\frac{y_n}{\eps}}\right|_{N^r(\Omega^0)}\lesssim\sqrt{\eps}.
  \end{align}
  \end{prop}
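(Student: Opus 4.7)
The strategy is to extract a factor of $y_n$ from $\mathcal{L}_2-\mathcal{L}_{2,0}$, combine it with the $y_n$-cutoff in $\chi^\eps$ to produce the $\sqrt{\eps}$, and then recognize the remaining object as essentially the same as $W^\eps|_{\theta_0=y_0/\eps,\xi_n=y_n/\eps}$, already estimated in Proposition~\ref{33cc}. First, since $\psi_0(y')=y_0$ gives $d\psi_0=(1,0,\dots,0,0)$, the definition \eqref{a2} yields $L(y,d\psi_0)=B_0(y)$, and a first-order Taylor expansion in $y_n$ produces $B_0(y)-B_0(y',0)=y_n\,C(y)$ with $C\in C^\infty_b(\Omega^0)$. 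Because $\chi^\eps$ does not depend on $(\theta_0,\xi_n)$, this gives
\begin{align*}
(\mathcal{L}_2-\mathcal{L}_{2,0})W^\eps(y,\theta_0,\xi_n)=y_n\,C(y)\,\chi^\eps(y_0,y_n)\,\partial_{\theta_0}W(y,\theta_0,\xi_n),
\end{align*}
which after evaluation becomes $C(y)\,\bigl[y_n\chi(y_n/\sqrt{\eps})\bigr]\,\chi(y_0/\sqrt{\eps})\,[\partial_{\theta_0}W](y,y_0/\eps,y_n/\eps)$.

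Next I would show that $\chi^\eps(y_0,y_n)\,[\partial_{\theta_0}W](y,y_0/\eps,y_n/\eps)$ satisfies the \emph{same} $N^r(\Omega^0)\lesssim 1$ bound as $W^\eps|_{\theta_0=y_0/\eps,\xi_n=y_n/\eps}$ in Proposition~\ref{33cc}. Differentiating the formula \eqref{w} for $t_k$ under the integral sign, $\partial_{\theta_0}t_k$ is a finite sum of integrals of exactly the form \eqref{33d} with $\sigma_m$ and/or $\sigma_p$ replaced by $\partial_{\theta_m}\sigma_m$, $\partial_{\theta_p}\sigma_p$; these derivatives are still smooth with compact support in their respective $\theta$-arguments. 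Consequently the support lemmas \ref{28g} and \ref{28m} hold for $\partial_{\theta_0}t_k$ verbatim, and the partition-of-unity split into $T^\eps_{k,j}$ ($j\neq k$, estimated in $\mathcal{M}_0$ using the interaction-region bound \eqref{27c}--\eqref{27d}) and $T^\eps_{k,k}$ (estimated in $\mathcal{M}_k$ using the spaces $\Lambda^\eps$, $\Lambda^{\sqrt{\eps}}$ of Definition~\ref{30d}) applies without change. Via Lemma~\ref{43e} one concludes $|\chi^\eps\,[\partial_{\theta_0}W]|_{\ldots}|_{N^r(\Omega^0)}\lesssim 1$.

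Finally, I extract the $\sqrt{\eps}$ by writing $y_n\chi(y_n/\sqrt{\eps})=\sqrt{\eps}\,\rho^\eps(y_n)$ with $\rho^\eps(y_n):=(y_n/\sqrt{\eps})\chi(y_n/\sqrt{\eps})$. The function $C(y)\rho^\eps(y_n)$ is in $L^\infty(\Omega^0)$ uniformly in $\eps$, and its images under the generators of $\mathcal{M}_0$ and each $\mathcal{M}_k$ remain bounded uniformly in $\eps$: only $M_n=y_n(\partial_n+\partial_n\phi_k\partial_0)$ and $M_{n+1}=(y_0-\phi_k)\partial_n$ can differentiate $\rho^\eps$ in $y_n$, and the $y_n$ or $(y_0-\phi_k)$ prefactor either absorbs the $1/\sqrt{\eps}$ arising from $\chi'(y_n/\sqrt{\eps})$ on $\mathrm{supp}\,\chi(y_n/\sqrt{\eps})=\{|y_n|\leq 2\sqrt{\eps}\}$, or (for $M_{n+1}$) remains harmlessly bounded since $y_0-\phi_k$ is smooth on $\Omega^0$. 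Multiplication by $C(y)\rho^\eps(y_n)$ is therefore bounded on each $N^r(\Omega^0,\mathcal{M}_j)$ uniformly in $\eps$; combining this with the Step~2 estimate gives \eqref{L20b}. The only mild subtlety—the main ``obstacle''—is this uniform conormal bound for $\rho^\eps$, which is precisely the reason \eqref{29aa} was introduced at the intermediate scale $\sqrt{\eps}$ rather than $\eps$; the verification mirrors Step~3 in the proof of Proposition~\ref{30a}.
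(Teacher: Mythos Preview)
Your overall strategy is the same as the paper's: extract $y_n$ from $B_0(y)-B_0(y',0)$, observe that $\chi^\eps\partial_{\theta_0}W$ has exactly the structure of $W^\eps$ so that Propositions~\ref{28r} and \ref{30a} apply to it verbatim, and use the extra $y_n$ to gain $\sqrt{\eps}$. The paper carries this out by keeping the bare factor $y_n$ inside $\mathcal{T}^\eps_{k,j}$ and rerunning those two propositions with the additional observation that $(M_0,\dots,M_{n+1})^\gamma y_n$ is always a $C^\infty_b$-linear combination of $y_0-\phi_k$ and $y_n$, hence $\lesssim\sqrt{\eps}$ on $I_{\sqrt{\eps}}$ (and similarly $(V_1,\dots,V_{n+3})^\gamma y_n\lesssim\eps$ on $I_\eps$ for $j\neq k$, actually giving the better bound $\eps$ there).

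There is, however, a genuine gap in your final step. The claim that multiplication by $C(y)\rho^\eps(y_n)$ is bounded on $N^r(\Omega^0,\mathcal{M}_j)$ uniformly in $\eps$ is \emph{false} as stated. For $j=k$ the generator $M_{n+1}=(y_0-\phi_k)\partial_n$ gives
\[
M_{n+1}\rho^\eps=\frac{y_0-\phi_k}{\sqrt{\eps}}\Bigl[\chi\bigl(\tfrac{y_n}{\sqrt{\eps}}\bigr)+\tfrac{y_n}{\sqrt{\eps}}\,\chi'\bigl(\tfrac{y_n}{\sqrt{\eps}}\bigr)\Bigr],
\]
and $(y_0-\phi_k)/\sqrt{\eps}$ is \emph{not} uniformly bounded on $\Omega^0$; smoothness of $y_0-\phi_k$ is irrelevant here. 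The same failure occurs for $\mathcal{M}_0$ via the generator $y_0\partial_n$. The bound you need holds only on the \emph{support} of the other factor, where Lemma~\ref{28g} together with \eqref{28k}(c) (for $j=k$) or Lemma~\ref{28m} (for $j\neq k$) force $|y_0-\phi_k|\lesssim\eps$, resp.\ $|y_0|\lesssim\eps$. So $\rho^\eps$ cannot be treated as a stand-alone multiplier on $N^r(\Omega^0,\mathcal{M}_j)$; you must either invoke these support constraints explicitly in the Leibniz expansion, or---as the paper does---dispense with $\rho^\eps$ and simply track the bare $y_n$ through the $\mathcal{M}_j$-calculus on $I_{\sqrt{\eps}}$ (resp.\ $I_\eps$), which makes the degree-one-in-$(y_0-\phi_k,y_n)$ structure manifest.
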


\begin{proof}
  
  \textbf{1. }Since $\psi_0(y')=y_0$, we have
\begin{align}\label{34a}
(\mathcal{L}_2-\mathcal{L}_{2,0})W^\eps=[\mathcal{A}(y,d'\psi_0)-\mathcal{A}(y',0,d'\psi_0)]\partial_{\theta_0}W^\eps=[B_0(y)-B_0(y',0)]\partial_{\theta_0}W^\eps,
\end{align}
where $W^\eps(y,\theta_0,\xi_n)=\chi^\eps(y_0,y_n)\sum_kt_k(y,\theta_0,\xi_n)r_k(y',0)$ and  $t_k(y,\theta_0,\xi_n)$ is $C^\infty$ with derivatives that are bounded on bounded subsets of $\Omega^0\times\mathbb{R}^2_{\theta_0,\xi_n}$; see Remark \ref{d10yy}.   Thus,
\begin{align}\label{34b}
[(\mathcal{L}_2-\mathcal{L}_{2,0})W^\eps]|_{\theta_0=\frac{y_0}{\eps},\xi_n =\frac{y_n}{\eps}}=\chi^\eps(y_0,y_n)[B_0(y)-B_0(y',0)]\sum_k\partial_{\theta_0}t_k(y,\theta_0,\xi_n)r_k(y',0)|_{\theta_0=\frac{y_0}{\eps},\xi_n =\frac{y_n}{\eps}}.%y_nf(y,\frac{y_0}{\eps},\frac{y_n}{\eps})
\end{align}
In view of \eqref{34b}  in place of \eqref{28p} we should now estimate terms of the form 
\begin{align}\label{34c}
\mathcal{T}^\eps_{k,m,p}(y):=\chi^\eps(y_0,y_n)y_n\; \left(\int^{\xi_n}_{+\infty}\mathcal{I}_{k,m,p}(s;y,\theta_0,\xi_n)ds\right)|_{\theta_0=\frac{y_0}{\eps},\xi_n=\frac{y_n}{\eps}},  \;m\neq p, 
\end{align}
where $\int^{\xi_n}_{+\infty}\mathcal{I}_{k,m,p}(s;y,\theta_0,\xi_n)ds$ denotes the integral \eqref{33d} with one of $\sigma_m$ or $\sigma_p$ replaced by $\partial_{\theta_0}\sigma_m$ or $\partial_{\theta_0}\sigma_p$.

\textbf{2. }Parallel to \eqref{28q} we now write
$$\mathcal{T}^\eps_{k,m,p}(y)=\sum_j\chi_j(y)\mathcal{T}^\eps_{k,m,p}(y)=\sum_j\mathcal{T}^\eps_{k,j}(y).$$
We claim that 
\begin{align}\label{34d}
|\mathcal{T}^\eps_{k,j}(y)|_{N^r(\Omega^0,\mathcal{M}_0)}\lesssim \eps \text{ for }j\neq k.  
\end{align}
To see this use the argument of Proposition \ref{28r}, the observation that\footnote{Here the $V_j$ are the generators of $\mathcal{M}_0$.}
\begin{align}\label{34e}
\text{ for }|\gamma|\leq r\;\; (V_1,\dots,V_{n+3})^\gamma y_n=C(y_0,y_n)^\kappa \text{ where }|\kappa|=1,
\end{align}
and the fact $|(y_0,y_n)^\kappa|\lesssim \eps$  on $\mathrm{supp}\;\mathcal{T}^\eps_{k,j}\subset I_\eps$. 

\textbf{3. }Finally, we claim that 
\begin{align}\label{34f}
|\mathcal{T}^\eps_{k,k}(y)|_{N^r(\Omega^0,\mathcal{M}_k)}\lesssim \sqrt{\eps}.  
\end{align}
To see this,  first use an induction on $|\gamma|$ to show $$(M_0,\dots,M_{n+1})^\gamma y_n=(c_\gamma(y)(y_0-\phi_k),d_\gamma(y)y_n)^\kappa$$ for some smooth functions $c_\gamma$, $d_\gamma$ and multi-index $\kappa$ such that $|\kappa|=1$.\footnote{The $M_j$ are the generators of $\mathcal{M}_k.$}  The estimate \eqref{34f} then follows from the argument of Proposition \ref{30a} and  the fact that

$$|(c_\gamma(y)(y_0-\phi_k),d_\gamma(y)y_n)^\kappa|_{L^\infty(\Omega^0\cap I_{\sqrt{\eps}})}\lesssim \sqrt{\eps},$$

\end{proof}
%Now a simple induction on $|\gamma|$ 

\subsubsection{Estimate of $\eps\mathcal{E}_1^\eps(y)$.}\label{E1}
In  this section we prove the following proposition as a consequence of Corollary \ref{33a} and Propositions \ref{31b} and \ref{35c} below:
 
  \begin{prop}\label{E1a}
  For every $r\in\mathbb{N}_0$ and $\mathcal{E}_1^\eps$ as in \eqref{d15} we have
  \begin{align}\label{E1b}
  |\eps\mathcal{E}_1^\eps(y)|_{N^r(\Omega^0)}\lesssim\sqrt{\eps}.
  \end{align}
  \end{prop}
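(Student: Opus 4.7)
The plan is to split $\mathcal{E}_1^\eps$ into the three pieces displayed in \eqref{d15} and bound each in $N^r(\Omega^0)$, showing that two of them are in fact $O(\eps)$ while only the $W^\eps$-piece forces the overall $\sqrt{\eps}$ rate.

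For $\eps L(y,\partial_y) V(y,\theta)|_{\theta=\psi/\eps}$, Corollary \ref{33a} gives $|L(y,\partial_y) V|_{\theta=\psi/\eps}|_{N^r(\Omega^0)}\lesssim 1$, so multiplying by $\eps$ yields $O(\eps)\leq O(\sqrt{\eps})$. For the nonlinear piece $-\eps K(y,U_0,\eps\mathcal{U}_1^\eps)\mathcal{U}_1^\eps|_{\theta=\psi/\eps,\theta_0=\psi_0/\eps,\xi_n=y_n/\eps}$, I would use that $K$ is smooth in its arguments (by the chain rule $K = \int_0^1 f_u(y, U_0 + s\eps\mathcal{U}_1^\eps)\,ds$) together with the uniform $L^\infty\cap N^r$ bounds on $U_0(y,\psi/\eps)$, $V(y,\psi/\eps)$, and $W^\eps(y,y_0/\eps,y_n/\eps)$ supplied by Proposition \ref{32a}. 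Invoking the algebra property of $L^\infty\cap N^m(\Omega^0)$ for $m>\tfrac{n+5}{2}$ (Proposition \ref{35a}) gives that this full product is uniformly bounded in $L^\infty\cap N^r$, so multiplication by $\eps$ yields an $O(\eps)\leq O(\sqrt{\eps})$ contribution.

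The core of the work is the $\eps L(y,\partial_y) W^\eps|_{\theta_0=\psi_0/\eps,\xi_n=y_n/\eps}$ piece, and this is what I expect Proposition \ref{31b} to isolate. Writing $W^\eps = \chi^\eps(y_0,y_n) W(y,\theta_0,\xi_n)$ with $\chi^\eps = \chi(y_0/\sqrt{\eps})\chi(y_n/\sqrt{\eps})$ and expanding $L = \partial_n + \sum B_j(y)\partial_j$ by the product rule, I would separate into (i) derivatives landing on $W(y,\theta_0,\xi_n)$ (through its $y$-dependence via $\omega_k(y')$, $c(y)$, and $r_k(y',0)$), and (ii) derivatives landing on the cutoff $\chi^\eps$. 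For (i) the cutoff remains intact, so the same wedge partition of unity and case analysis used in Propositions \ref{28r} and \ref{30a} (using $\mathcal{M}_0$ off-diagonally on $\mathrm{supp}\,\chi_j$ for $j\neq k$, and $\mathcal{M}_k$ on $\mathrm{supp}\,\chi_k$) yields $|\cdot|_{N^r(\Omega^0)}\lesssim 1$; multiplying by $\eps$ gives $O(\eps)$. For (ii) the derivative produces a factor $\eps^{-1/2}\chi'(y_0/\sqrt{\eps})$ or $\eps^{-1/2}\chi'(y_n/\sqrt{\eps})$ multiplied by $W$, so after the $\eps$ premultiplication the prefactor is exactly $\sqrt{\eps}$ times a function whose $N^r$-norm is uniformly bounded; applying conormal fields preserves boundedness because each subsequent $y_0\partial_0$ or $y_n\partial_n$ acting on $\chi'(y_0/\sqrt{\eps})$ or $\chi'(y_n/\sqrt{\eps})$ produces bounded quantities like $(y_0/\sqrt{\eps})\chi''(y_0/\sqrt{\eps})$ on the support $\{|y_0|,|y_n|\lesssim\sqrt{\eps}\}$.

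The main obstacle is therefore the bookkeeping in step (ii): one must verify that repeated action of the generators of $\mathcal{M}_k$ and $\mathcal{M}_0$ on the product $(\partial_j\chi^\eps)(y_0,y_n) \cdot W(y,y_0/\eps,y_n/\eps)$ preserves the $O(\sqrt{\eps})$ size. This is exactly the ``balance'' built into the choice $\chi^\eps = \chi(\cdot/\sqrt{\eps})$: the support condition $|y_0|,|y_n|\lesssim\sqrt{\eps}$ matches the $\eps^{-1/2}$ loss from each cutoff derivative, while $W(y,\theta_0,\xi_n)$ has derivatives bounded on bounded subsets of $\Omega^0\times\mathbb{R}^2_{\theta_0,\xi_n}$ by Remark \ref{d10yy}, so factors like $y_n/\eps$ remain bounded on $\mathrm{supp}\,\chi^\eps\cap\{y_n\lesssim\sqrt{\eps}\}$ only up to $1/\sqrt{\eps}$ — but any such unbounded factor is accompanied by a compensating smallness coming from the integrals defining $t_k$, exactly as exploited in Lemma \ref{28g} and Proposition \ref{30a}. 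Combining the three bounds yields the claimed estimate $|\eps\mathcal{E}_1^\eps|_{N^r(\Omega^0)}\lesssim\sqrt{\eps}$.
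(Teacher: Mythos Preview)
Your overall strategy matches the paper's: split $\mathcal{E}_1^\eps$ into the $V$-piece (Corollary \ref{33a}), the $K$-piece (Propositions \ref{32a} and \ref{35a}, packaged in the paper as Proposition \ref{35c}), and the $W^\eps$-piece (Proposition \ref{31b}). The first two are handled correctly.

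However, your analysis of case (i) for the $W^\eps$-piece is wrong. When $\partial_j$ from $L(y,\partial_y)$ lands on $t_k(y,\theta_0,\xi_n)$ through the $y$-dependence of $\omega_k(y')$ in the argument $\theta_0+\omega_k(y')\xi_n$, it produces a factor $(\partial_j\omega_k)\xi_n$ inside the integral; see \eqref{31d}. After substituting $\xi_n=y_n/\eps$, this factor is only bounded by $1/\sqrt{\eps}$ on the support $I_{\sqrt{\eps}}$ of $\chi^\eps$, so case (i) yields $O(1/\sqrt{\eps})$ in $N^r$, not $O(1)$. Your appeal to ``compensating smallness from Lemma \ref{28g} and Proposition \ref{30a}'' is a misreading: in those results the dangerous $y_n/\eps$ factors arise only from differentiating the upper limit of integration, and the resulting term (called $K^\eps$ in \eqref{30k}) is supported in the smaller region $I_\eps$, where $y_n/\eps$ is genuinely bounded. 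Here the $\xi_n$ factor sits inside the integrand and there is no such compensation.

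Fortunately this error is harmless for the final bound: both (i) and (ii) are $O(1/\sqrt{\eps})$ in $N^r$, so after premultiplication by $\eps$ both give $O(\sqrt{\eps})$, exactly as recorded in Proposition \ref{31b}. Two minor additional points: the algebra estimate of Proposition \ref{35a} holds for every $r\in\mathbb{N}_0$, not only for $m>\tfrac{n+5}{2}$; and it is stated on the truncated cones $\Omega_T\subset\Omega^0$ rather than on $\Omega^0$ itself, which is why the paper's Proposition \ref{35c} is formulated on $\Omega_T$.
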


Consider the term $\eps  \left[L(y,\partial_y)W^\eps(y,\theta_0,\xi_n)\right]|_{\theta_0=\frac{y_0}{\eps},\xi_n=\frac{y_n}{\eps}}$,
where
\begin{align}\label{31a}
L(y,\partial_y)W^\eps=\sum^n_{j=0}B_j(y)\partial_j\left[\chi^\eps(y_0,y_n)\sum_kt_k(y,\theta_0,\xi_n)r_k(y',0)\right].
\end{align}

\begin{prop}\label{31b}
For all $r\in\mathbb{N}_0$ we have
 \begin{align}
 \left|\left[L(y,\partial_y)W^\eps(y,\theta_0,\xi_n)\right]|_{\theta_0=\frac{y_0}{\eps},\xi_n=\frac{y_n}{\eps}}\right|_{N^r(\Omega^0)}\lesssim\frac{1}{\sqrt{\eps}}.
\end{align}
\end{prop}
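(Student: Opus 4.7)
The plan is to expand $L(y,\partial_y)W^\eps$ and trace the single $1/\sqrt{\eps}$-loss to its source. With $L(y,\partial_y)W^\eps(y,\theta_0,\xi_n) = \sum_{j=0}^{n} B_j(y)\,\partial_j\bigl[\chi^\eps(y_0,y_n)\sum_k t_k(y,\theta_0,\xi_n)r_k(y',0)\bigr]$ (the $\partial_j$ acting only on the explicit $y$-dependence, with $\theta_0,\xi_n$ treated as inert parameters), the Leibniz rule produces three kinds of contribution which, after the evaluation $\theta_0=y_0/\eps,\xi_n=y_n/\eps$, I label: (I) $(\partial_j\chi^\eps)\,t_k\,r_k$; (II) $\chi^\eps\,(\partial_j t_k)\,r_k$; (III) $\chi^\eps\,t_k\,(\partial_j r_k)$. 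Type (III) is bounded in $N^r(\Omega^0)$ by $O(1)$ exactly as $W^\eps$ itself in Proposition~\ref{33cc}, so the work lies in (I) and (II).

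For (I), $|\partial_j\chi^\eps|\lesssim 1/\sqrt{\eps}$ and vanishes for $j\in\{1,\dots,n-1\}$; the remaining factor $t_k(y,y_0/\eps,y_n/\eps)\,r_k(y',0)$ is the one estimated in Proposition~\ref{33cc}, so the $L^\infty$ bound is $1/\sqrt\eps$. To upgrade to $N^r$, I would apply the wedge decomposition $\sum_j\chi_j=1$ as in Propositions~\ref{28r} and~\ref{30a}: on wedges $W_j$ with $j\neq k$, Lemma~\ref{28m} confines the support to $I_\eps$ and one proceeds with $\mathcal{M}_0$ exactly as in Proposition~\ref{28r}; on the characteristic wedge $W_k$, support lies in $I_{\sqrt\eps}$ and one uses $\mathcal{M}_k$. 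The key observation is that $\sqrt\eps\cdot\partial_j\chi^\eps\in\Lambda^{\sqrt\eps}(\Omega^0)$: for instance, $M_0\partial_0\chi^\eps=(y_0-\phi_k)\partial_0^2\chi^\eps$, and on $W_k\cap\mathrm{supp}\,\chi''(y_0/\sqrt\eps)$ Lemma~\ref{relations} gives $|y_0-\phi_k|\lesssim\sqrt\eps$, which balances the $1/\eps$ from $\partial_0^2\chi^\eps$. Thus each subsequent generator application is absorbed by the $\Lambda^{\sqrt\eps}$-calculus of Remark~\ref{30e} without compounding the $1/\sqrt\eps$ factor.

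For (II), $\partial_j t_k(y,\theta_0,\xi_n)$ inherits the integral representation~\eqref{33d} of $t_k$, with additional factors inside the integrand of the form $s\cdot\partial_{y_l}\omega_k(y')$ (when $j=l$, $l=1,\dots,n-1$) or $\xi_n\cdot\partial_0\omega_k(y')$ (when $j=0$), multiplied by $\partial_{\theta_m}\sigma_m$ or $\partial_{\theta_p}\sigma_p$. Under the change of variable~\eqref{30j}, $s$ is $O(1)$ on the nonvanishing region of the integrand by Lemma~\ref{28g}, so the $s$-factor is harmless; the $\xi_n$-factor, when evaluated at $\xi_n=y_n/\eps$ on $\mathrm{supp}\,\chi^\eps$ (where $|y_n|\lesssim\sqrt\eps$), supplies the single $1/\sqrt\eps$-loss. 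The $N^r$ bound is then obtained by repeating the proof of Proposition~\ref{30a}: iterated $\mathcal{M}_k$-generators either smoothly differentiate the coefficients of the integrand or differentiate the upper limit $y_n/\eps$, and both actions are absorbed by the $\Lambda^{\sqrt\eps}$-calculus (Definition~\ref{30d}, Remark~\ref{30e}) without further loss in $\eps$. The main obstacle will be this $W_k$-wedge piece of (II): one must verify that the combined effect of iterated differentiation under the integral sign, the chain-rule contributions from $\omega_k(y')(\xi_n-s)$, and the boundary terms from the $y_n/\eps$ limit all remain controlled by $1/\sqrt\eps$ on $I_{\sqrt\eps}$, which is a careful repetition of the calculus set up in Proposition~\ref{30a}, now with one extra factor of $\xi_n$ already carried from the initial differentiation by $L(y,\partial_y)$.
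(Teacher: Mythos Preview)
Your proposal is correct and follows essentially the same route as the paper: the split into (I), (II), (III) matches the paper's $P^\eps_k$, $Q^\eps_k$, and the dismissed smooth $r_k$-factor; the identification of the $1/\sqrt\eps$ as coming from either $\partial_j\chi^\eps$ or the $\xi_n$-factor in $\partial_j t_k$ is exactly the paper's mechanism, and the appeal to the wedge decomposition and the $\Lambda^{\sqrt\eps}$-calculus of Propositions~\ref{28r}, \ref{30a} and Remark~\ref{30e} is the same. One small slip: the $\xi_n\cdot\partial_j\omega_k(y')$ factor appears for \emph{all} $j\in\{0,\dots,n-1\}$ (since $\omega_k$ depends on $y'=(y_0,y'')$), not only for $j=0$; this does not affect your estimate, since the $\xi_n$-factor is handled identically in every case.
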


\begin{proof}

As usual we are  free to ignore smooth $\eps-$independent functions of $y$  in the estimates, so we focus on estimating for example:
\begin{align}\label{31aa}
\begin{split}
&\left[\partial_0\left[\chi\left(\frac{y_0}{\sqrt{\eps}}\right)\chi\left(\frac{y_n}{\sqrt{\eps}}\right)t_k(y,\theta_0,\xi_n)\right]\right]|_{\theta_0=\frac{y_0}{\eps},\xi_n=\frac{y_n}{\eps}}=\left[\frac{1}{\sqrt{\eps}}\chi'\left(\frac{y_0}{\sqrt{\eps}}\right)\chi\left(\frac{y_n}{\sqrt{\eps}}\right)t_k(y,\theta_0,\xi_n)\right]|_{\theta_0=\frac{y_0}{\eps},\xi_n=\frac{y_n}{\eps}}\;+\\
&\qquad \left[\chi\left(\frac{y_0}{\sqrt{\eps}}\right)\chi\left(\frac{y_n}{\sqrt{\eps}}\right)\partial_0t_k(y,\theta_0,\xi_n)\right]|_{\theta_0=\frac{y_0}{\eps},\xi_n=\frac{y_n}{\eps}}:=P^\eps_k(y)+Q^\eps_k(y).
\end{split}
\end{align}
%Each term can be estimated by the procedure used to estimate $T^\eps_{k,m,p}$  in \eqref{28p} of section \ref{L20}, giving
We claim
\begin{align}\label{31c}
|P^\eps_k|_{N^r(\Omega^0)}\lesssim \frac{1}{\sqrt{\eps}}\text{ and }|Q^\eps_k|_{N^r(\Omega^0)}\lesssim \frac{1}{\sqrt{\eps}}.
\end{align}

Each of $P^\eps_k$, $Q^\eps_k$  is similar to  $T^\eps_{k,m,p}$  in \eqref{28p},  and so can be estimated by the argument used to prove Proposition \ref{33cc}. This is clear for $P^\eps_k$, since $\chi'\left(\frac{y_0}{\sqrt{\eps}}\right)\chi\left(\frac{y_n}{\sqrt{\eps}}\right)\in\Lambda^{\sqrt{\eps}}(\Omega^0)$.  To treat $Q^\eps_k$ first use the formula \eqref{d10z} to see that $\partial_0 t_k(y,\theta_0,\xi_n)$ is a sum of terms of the form
\begin{align}\label{31d}
\int^{\xi_n}_{+\infty}\left[\partial_0\sigma_m(\dots)+\partial_{\theta_m}\sigma_m(\dots)\left((\partial_0\omega_k)\xi_n+s(\partial_0\omega_m-\partial_0\omega_k)\right)\right]\sigma_p(\dots)ds+II.
\end{align}
Here the derivatives of $\sigma_m$ are evaluated at $(y,\theta_0+\omega_k(y')\xi_n+s(\omega_m(y')-\omega_k(y')))$, and $II$ is a similar term where $\sigma_p$ is differentiated and not $\sigma_m$. When \eqref{31d} is evaluated at $\theta_0=\frac{y_0}{\eps}$, $\xi_n=\frac{y_n}{\eps}$, the term $\partial_0\omega_k(y')\frac{y_n}{\eps}$ appears.   The support of $\chi_j(y)Q^\eps_k$ when $j\neq k$ is contained in $I_\eps$ by Lemma \ref{28m} and we have $\frac{y_n}{\eps}\in\Lambda^\eps(\Omega^0)$.\footnote{Recall Definition \ref{30d}.}  But the support of $\chi_k(y)Q^\eps_k$ is contained in $I_{\sqrt{\eps}}$, and  we only have
$\frac{y_n}{\eps}=\frac{1}{\sqrt{\eps}}\frac{y_n}{\sqrt{\eps}}$ with $\frac{y_n}{\sqrt{\eps}}\in\Lambda^{\sqrt{\eps}}(\Omega^0)$. 

The  estimates \eqref{31c} still hold when $\partial_0$ is replaced by $\partial_i$, $i\neq 0$ in \eqref{31aa}.
\end{proof}

%\subsubsection{Estimate of 
To estimate the term of $\eps\mathcal{E}_1^\eps$ given by
$\eps K(y,U_0,\eps \mathcal{U}^\eps_1)\mathcal{U}^\eps_1$,  we can apply the following general result from \cite{metajm}.  In Proposition \ref{35a}   $T_0$ need not be small and for any $0<T<T_0$, $\Omega_T$ is a truncated backward cone as defined  in \eqref{a1z}.  %Let $\mathbb{R}^{1+n}_+=\{y\in\mathbb{R}^{1+n}:y_n\geq 0\}$. 

\begin{prop}\label{35a}
 (a) For  $M\in\mathbb{N}$ let $F(y,Z)\in C^\infty(\mathbb{R}^{1+n}_+\times \mathbb{R}^M,\mathbb{R}^N)$.    For  any $0<T_1<T_0$ and $r\in\mathbb{N}_0$ there exist a constant $C_0>0$ and an increasing function $h:\mathbb{R}_+\to \mathbb{R}_+$ such that for any $0<T\leq T_1$,  if $Z\in L^\infty\cap N^r(\Omega_T)$, then $f(y):=F(y,Z(y))$ belongs to $L^\infty\cap N^r(\Omega_T)$ and satisfies\footnote{The constant $C_0$ and function $h$ depend on $T_1$ but not on $T$.}
\begin{align}\label{35b}
\begin{split}
& (i) |f|_{L^\infty(\Omega_T)}\leq h(|Z|_{L^\infty(\Omega_T)}) \\
& (ii) |f|_{N^r(\Omega_T)}\leq C_0+ h(|Z|_{L^\infty(\Omega_T)})|Z|_{N^r(\Omega_T)}.
\end{split}
\end{align}

(b) For $T_1$ as above and any $r\in\mathbb{N}_0$  there exists $C>0$ such for any $0<T\leq T_1$, if  $u$, $v$ are real valued functions in $L^\infty\cap N^r(\Omega_T)$
then %(CHECK) 
\begin{align}\label{35bz}
 |uv|_{L^\infty\cap N^r(\Omega_T)}\leq C|u|_{L^\infty\cap N^r(\Omega_T)}|v|_{L^\infty\cap N^r(\Omega_T)}.
\end{align}
\end{prop}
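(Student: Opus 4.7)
My plan is to reduce to a single conormal module and then apply a Moser / Gagliardo-Nirenberg argument adapted to that setting.

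\textbf{Step 1 (Reduction via the partition of unity).} By Lemma \ref{43e}, the norm $|u|_{N^r(\Omega_T)}$ is equivalent to $\sum_{j=0}^{N} |\chi_j u|_{N^r(\Omega_T, \mathcal{M}_j)}$ with constants independent of $T$. Since each $\chi_j \in \Lambda(\Omega^0)$ by \eqref{reg2}, the Leibniz rule shows that multiplication by $\chi_j$ is bounded from $L^\infty \cap N^r(\Omega_T)$ to $L^\infty \cap N^r(\Omega_T, \mathcal{M}_j)$. Hence it suffices to prove both assertions with $N^r(\Omega_T)$ replaced by $N^r(\Omega_T, \mathcal{M}_j)$ for each fixed $j$.

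\textbf{Step 2 (Handling the $Z$-independent part).} Using the first-order Taylor formula, write $F(y, Z) = F(y, 0) + Z \cdot \tilde{F}(y, Z)$ for some smooth $\tilde F$. Since $F(\cdot, 0) \in C^\infty(\mathbb{R}^{1+n}_+)$ is bounded with bounded derivatives on $\Omega^0$, its $L^\infty \cap N^r(\Omega_T, \mathcal{M}_j)$ norm is bounded by a constant $C_0$ depending only on $F, r, T_1$. It therefore suffices to prove (a) when $F(y, 0) = 0$, in which case the additive constant disappears.

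\textbf{Step 3 (Fa\`a di Bruno and Gagliardo-Nirenberg).} Fix $j$ and let $M_0, \ldots, M_{n+1}$ be the generators of $\mathcal{M}_j$. For any sequence $I$ of these generators of length $k \leq r$, the chain rule gives
\begin{align*}
M^I \bigl[ F(y, Z(y)) \bigr] = \sum (\partial_y^\alpha \partial_Z^\beta F)(y, Z(y)) \, \prod_{\ell=1}^{|\beta|} M^{I_\ell} Z,
\end{align*}
summed over decompositions with $|\alpha| + \sum_\ell |I_\ell| \leq k$ and $|I_\ell| \geq 1$. When $F(y, 0) = 0$, every surviving term has $|\beta| \geq 1$. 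Smoothness of $F$ gives the pointwise bound $|(\partial_y^\alpha \partial_Z^\beta F)(y, Z(y))| \leq h(|Z|_{L^\infty(\Omega_T)})$, so (a)(ii) reduces to the conormal Gagliardo-Nirenberg inequality
\begin{align*}
\Bigl| \prod_{\ell=1}^p M^{I_\ell} Z \Bigr|_{L^2(\Omega_T)} \lesssim |Z|_{L^\infty(\Omega_T)}^{\,p-1} \, |Z|_{N^r(\Omega_T, \mathcal{M}_j)}, \quad \sum_\ell |I_\ell| \leq r, \;\; |I_\ell| \geq 1.
\end{align*}
Assertion (a)(i) is immediate from smoothness of $F$. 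Part (b) follows from the bilinear analogue $|M^{I_1} u \cdot M^{I_2} v|_{L^2(\Omega_T)} \lesssim |u|_{L^\infty(\Omega_T)} |v|_{N^r(\Omega_T, \mathcal{M}_j)} + |u|_{N^r(\Omega_T, \mathcal{M}_j)} |v|_{L^\infty(\Omega_T)}$ for $|I_1| + |I_2| \leq r$, applied to each term of $M^I(uv) = \sum_{I_1 \sqcup I_2 = I} (M^{I_1} u)(M^{I_2} v)$.

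\textbf{Main obstacle.} The substantive work lies entirely in the conormal Gagliardo-Nirenberg estimate, proved by repeated integration by parts that trades two factors of equal order for one factor of higher order and one in $L^\infty$. The technical subtlety is that the generators of $\mathcal{M}_j$ degenerate at the corner $\Delta = \{y_0 = y_n = 0\}$ and $\Omega_T$ has several boundary components, namely the face $\{y_n = 0\}$, the spacelike top $\{y_0 = T\}$, and the lateral face defined in \eqref{a1z}. One must verify that each integration by parts either produces a vector field in $\mathcal{M}_j$ (using its Lie-bracket closure modulo $C^\infty_b$-coefficients) or a bounded coefficient, and that boundary contributions vanish or are absorbable because the generators are tangent to $\Delta$ and to $\{y_n = 0\}$. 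This is precisely the estimate carried out in \cite{metduke, metajm} in the same geometric framework, so I would invoke that machinery rather than redo the technical manipulations. Uniformity in $T \in (0, T_1]$ is automatic since restriction from $\Omega_{T_1}$ to $\Omega_T \subset \Omega_{T_1}$ only decreases the relevant norms.
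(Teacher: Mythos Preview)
Your approach is essentially the same as the paper's: both ultimately defer the substantive conormal Moser/Gagliardo--Nirenberg estimate to \cite{metduke, metajm}. The paper does not give your sketch of Steps 1--3; it simply records in Remark~\ref{40a} that (a)(ii) is Proposition~3.2.2 of \cite{metajm}, that the underlying half-space estimate follows from Lemmas~3.3.1 and~3.3.3 of \cite{metduke}, and that (b) follows from the same lemmas.

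One correction is needed in your last sentence. Your justification for uniformity in $T$ is backwards: restriction from $\Omega_{T_1}$ to $\Omega_T$ does not help, because the hypothesis only gives $Z \in L^\infty\cap N^r(\Omega_T)$, not $L^\infty\cap N^r(\Omega_{T_1})$, so there is nothing to restrict. The paper handles this via the bounded extension operators $R_T : L^\infty \cap N^r(\Omega_T) \to L^\infty \cap N^r(\mathbb{R}^{1+n}_+)$ of Proposition~3.2.1 in \cite{metajm}, whose operator norms are bounded uniformly for $0<T\leq T_1$: extend $Z$ to the full half-space, apply the estimate there (where no $T$ appears), then restrict $f$ back to $\Omega_T$. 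This extension step is a genuine ingredient, and without it a direct integration-by-parts argument on $\Omega_T$ would have to track carefully that no constant blows up as $T\to 0$.

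A smaller point: your Step~1 reduction is slightly glib. Having $Z\in N^r(\Omega_T)$ gives control of $\chi_j Z$ in $N^r(\Omega_T,\mathcal{M}_j)$ via Lemma~\ref{43e}, but your Fa\`a di Bruno expansion of $M^I[\chi_j F(y,Z)]$ requires $M^{I_\ell}Z$ (not $M^{I_\ell}(\chi_j Z)$) on $\mathrm{supp}\,\chi_j$. Reconciling this is exactly what the machinery of \cite{metduke} (its Lemmas~3.3.1 and~3.3.3) accomplishes, so your final invocation of that reference covers the gap, but the reduction as stated does not stand on its own.
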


\begin{rem}\label{40a}
\textup{The  estimate \eqref{35b}(i) is trivial.    Estimate \ref{35b}(ii) is stated in Proposition 3.2.2 of \cite{metajm}.     The corresponding estimate for  functions on $\mathbb{R}^{1+n}_+$  is a consequence of Lemmas 3.3.1 and 3.3.3 of the companion paper \cite{metduke}.  The estimate on $\Omega_T$ is then deduced using the bounded extension operators $R_T:L^\infty\cap N^m(\Omega_T)\to L^\infty\cap N^m(\mathbb{R}^{1+n}_+)$ constructed in Proposition 3.2.1 of \cite{metajm}.   The estimate \eqref{35bz}  is not explicitly stated in \cite{metajm} or  \cite{metduke}, but it too is a direct consequence of Lemmas 3.3.1 and 3.3.3 of  \cite{metduke} and Proposition 3.2.1 of \cite{metajm}.}

\end{rem}

\begin{prop}\label{35c}
Let $\Omega_T$ be as in Proposition \ref{35a} and suppose $\Omega_T\subset \Omega^0$ for  $\Omega^0$  as chosen in section \ref{O0}.  For any $r\in \mathbb{N}_0$ we have
\begin{align}\label{35d}
\left|[\eps K(y,U_0,\eps \mathcal{U}^\eps_1)\mathcal{U}^\eps_1]|_{\theta=\frac{\psi}{\eps}, \theta_0=\frac{y_0}{\eps},\xi_n=\frac{y_n}{\eps}} \right|_{N^r(\Omega_T)}\lesssim \eps. 
\end{align}
\end{prop}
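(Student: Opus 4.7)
The plan is to factor out the explicit $\eps$ and then show that the remaining expression is bounded in $L^\infty\cap N^r(\Omega_T)$ uniformly in $\eps\in(0,\eps_0]$, which immediately gives the desired $O(\eps)$ estimate after using $|\cdot|_{N^r(\Omega_T)}\leq |\cdot|_{L^\infty\cap N^r(\Omega_T)}$. The two tools that do all the work are Proposition \ref{35a}(a) (composition with smooth $F$) and Proposition \ref{35a}(b) (the algebra property of $L^\infty\cap N^r$).

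First I would assemble the uniform bounds on the pieces. From Remark \ref{d15z} and Proposition \ref{32a}, the functions
\[
U_0\!\left(y,\tfrac{\psi}{\eps}\right),\ V\!\left(y,\tfrac{\psi}{\eps}\right),\ W^\eps\!\left(y,\tfrac{y_0}{\eps},\tfrac{y_n}{\eps}\right)
\]
all lie in $L^\infty\cap N^r(\Omega^0)$ with norms bounded uniformly in $\eps$. Since $\Omega_T\subset\Omega^0$, these bounds restrict to $\Omega_T$. Consequently $\mathcal{U}^\eps_1|_{\theta=\psi/\eps,\theta_0=y_0/\eps,\xi_n=y_n/\eps}=V(y,\psi/\eps)+W^\eps(y,y_0/\eps,y_n/\eps)$ satisfies
\[
\bigl|\mathcal{U}^\eps_1\bigr|_{L^\infty\cap N^r(\Omega_T)}\lesssim 1,
\]
and therefore $\eps\mathcal{U}^\eps_1$ has the same bound with an extra factor of $\eps$; in particular it is uniformly bounded in $L^\infty(\Omega_T)$ by a constant independent of $\eps$.

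Next I would apply Proposition \ref{35a}(a) to the smooth function $F(y,Z_1,Z_2):=K(y,Z_1,Z_2)=\int_0^1 f_u(y,Z_1+sZ_2)\,ds$, which is $C^\infty$ because $f$ is. Taking $Z=(U_0(y,\psi/\eps),\eps\mathcal{U}^\eps_1)$, whose $L^\infty$ norm on $\Omega_T$ is uniformly bounded and whose $N^r$ norm on $\Omega_T$ is $\lesssim 1$, the estimate \eqref{35b}(ii) yields
\[
\bigl|K(y,U_0(y,\tfrac{\psi}{\eps}),\eps\mathcal{U}^\eps_1)\bigr|_{L^\infty\cap N^r(\Omega_T)}\lesssim 1.
\]
Then the algebra estimate \eqref{35bz} applied componentwise gives
\[
\bigl|K(y,U_0,\eps\mathcal{U}^\eps_1)\,\mathcal{U}^\eps_1\bigr|_{L^\infty\cap N^r(\Omega_T)}\ \lesssim\ \bigl|K\bigr|_{L^\infty\cap N^r}\bigl|\mathcal{U}^\eps_1\bigr|_{L^\infty\cap N^r}\ \lesssim\ 1.
\]
Multiplying by $\eps$ and using $|\cdot|_{N^r}\le |\cdot|_{L^\infty\cap N^r}$ yields \eqref{35d}.

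There is essentially no obstacle here; the only thing to verify is that Proposition \ref{35a} really applies componentwise to the matrix-valued function $K$, which is immediate since its entries are smooth functions of $(y,Z_1,Z_2)$ of the form required by Prop.\ \ref{35a}(a). The substantive work was all done earlier, in establishing the uniform $L^\infty\cap N^r$ bounds for $U_0$, $V$ and $W^\eps$ (Prop.\ \ref{32a}), and in the extension/composition machinery of \cite{metajm,metduke} underlying Proposition \ref{35a}. Thus the proof is a short combination of these two inputs.
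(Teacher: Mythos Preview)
Your proof is correct and takes essentially the same approach as the paper: the paper's proof simply says the result follows directly from Propositions \ref{32a} and \ref{35a} together with the uniform $L^\infty$ bound on $(U_0,\mathcal{U}^\eps_1)$, and you have spelled out exactly how those two propositions combine via the composition estimate \eqref{35b} and the algebra estimate \eqref{35bz}.
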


\begin{proof}
By construction we have $\left|(U_0,\mathcal{U}^\eps_1)|_{\theta=\frac{\psi}{\eps}, \theta_0=\frac{y_0}{\eps},\xi_n=\frac{y_n}{\eps}}\right|_{L^\infty(\Omega_T)}\lesssim 1$, so  the result follows directly from Propositions \ref{32a} and \ref{35a}.

\end{proof}

The formulas \eqref{d15} and the estimates of this section  imply:
\begin{prop}\label{39a}
Let $\Omega_T$ be as in Proposition \ref{35a} and suppose $\Omega_T\subset \Omega^0$.  For any $r\in\mathbb{N}_0$ we have
\begin{align}
|r^\eps_a|_{L^\infty\cap N^r(\Omega_T)}\lesssim \sqrt{\eps},
\end{align}
where the implied constant is independent of $0<T<T_1$.
\end{prop}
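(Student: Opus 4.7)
The plan is to assemble Proposition \ref{39a} as a direct corollary of the component estimates already established in sections \ref{ra} and \ref{L20}, \ref{E1}. Starting from the decomposition \eqref{d15}, namely $r^\eps_a = \mathcal{E}_0^\eps + \eps\mathcal{E}_1^\eps$ with
\[
\mathcal{E}_0^\eps = \bigl[(H_2-\mathcal{H}_2) + (\mathcal{L}_2-\mathcal{L}_{2,0})W^\eps\bigr]\Big|_{\theta=\frac{\psi}{\eps},\theta_0=\frac{\psi_0}{\eps},\xi_n=\frac{y_n}{\eps}},
\]
I would estimate each piece in $L^\infty\cap N^r$ on $\Omega^0$ and then restrict to $\Omega_T \subset \Omega^0$.

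For $\mathcal{E}_0^\eps$: I invoke Proposition \ref{27za} to obtain the bound $\lesssim \eps$ in $N^r(\Omega^0,\mathcal{M}_0)$ for $(H_2-\mathcal{H}_2)|_{\cdots}$, and Proposition \ref{L20a} for the $\sqrt{\eps}$ bound in $N^r(\Omega^0)$ on $[(\mathcal{L}_2-\mathcal{L}_{2,0})W^\eps]|_{\cdots}$. Since every $\mathcal{M}_j$ is contained in $\mathcal{M}_0$, we have the continuous embedding $N^r(\Omega^0,\mathcal{M}_0)\hookrightarrow N^r(\Omega^0,\mathcal{M}_j)$ for each $j$, and taking the trivial decomposition with one summand then gives $N^r(\Omega^0,\mathcal{M}_0)\hookrightarrow N^r(\Omega^0)$. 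For the $L^\infty$ part, the proofs of Propositions \ref{27za} and \ref{L20a} actually control the relevant quantities in the stronger $N^r_\infty$ norm (as flagged in the footnote just after Proposition \ref{32a}, and visible in the pointwise estimates \eqref{27d} and the $\sqrt{\eps}$ factor extracted in step 3 of the proof of Proposition \ref{L20a}), so restricting to $r=0$ yields the $L^\infty$ bounds of sizes $\eps$ and $\sqrt{\eps}$ respectively.

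For $\eps\mathcal{E}_1^\eps$: Proposition \ref{E1a} already packages the $N^r$ bound by $\sqrt{\eps}$, assembled from Corollary \ref{33a} (the $\eps L(y,\partial_y)V|_{\cdots}$ piece, which carries the prefactor $\eps$), Proposition \ref{31b} (the $\eps L(y,\partial_y)W^\eps|_{\cdots}$ piece, where the $1/\sqrt{\eps}$ from differentiating $\chi^\eps$ combines with the prefactor $\eps$ to give $\sqrt{\eps}$), and Proposition \ref{35c} for the nonlinear term $\eps K(y,U_0,\eps\mathcal{U}_1^\eps)\mathcal{U}_1^\eps$. The $L^\infty$ counterparts follow from the same pointwise arguments, and for the nonlinear term from Proposition \ref{35a}(i).

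Finally, restricting every estimate from $\Omega^0$ down to $\Omega_T$ only decreases the relevant $L^2$ and $L^\infty$ norms, and the resulting constants are intrinsically independent of $T$ because the pointwise estimates producing them were carried out on the fixed set $\Omega^0$; the single exception, Proposition \ref{35c}, yields a constant depending on $T_1$ only, via Proposition \ref{35a}, and is therefore uniform in $T \in (0,T_1)$. Collecting terms,
\[
|r^\eps_a|_{L^\infty\cap N^r(\Omega_T)} \leq |\mathcal{E}_0^\eps|_{L^\infty\cap N^r(\Omega_T)} + \eps|\mathcal{E}_1^\eps|_{L^\infty\cap N^r(\Omega_T)} \lesssim \eps + \sqrt{\eps} + \sqrt{\eps} \lesssim \sqrt{\eps},
\]
with the implied constant independent of $T$. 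There is no real obstacle here; the proposition is essentially a bookkeeping statement. The only subtlety worth flagging is the embedding $N^r(\Omega^0,\mathcal{M}_0) \hookrightarrow N^r(\Omega^0)$ used to absorb Proposition \ref{27za} into the conormal space of Definition \ref{43b}(c), and the verification that the $L^\infty$ versions of all the constituent estimates are already built into their proofs.
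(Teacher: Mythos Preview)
Your proposal is correct and follows the same approach as the paper, which simply states that the result follows from the formulas \eqref{d15} and the estimates of the section without further detail. Your write-up is in fact more careful than the paper's, explicitly noting the embedding $N^r(\Omega^0,\mathcal{M}_0)\hookrightarrow N^r(\Omega^0)$ needed to absorb Proposition \ref{27za}, the $L^\infty$ versions built into the proofs, and the uniformity in $T$.
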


\subsection{Estimate of the error $w^\eps=u^\eps-u^\eps_a$.}\label{weps} 
In this section we finish the proof of Theorem \ref{mr}. 

For $(L(y,\partial_y),B)$ as in section \ref{assumptions}  consider the linear boundary problem on $\mathbb{R}^{1+n}_+$:
\begin{align}\label{37a}
\begin{split}
&L(y,\partial_y)v=f \text{ in }y_n>0\\
& B(y')v|_{y_n=0}=g \\
& v=0\text{ in }y_0<0,
\end{split}
\end{align}
where $f$ and $g$ are zero in $y_0<0$.    We will use the following estimate from \cite{metajm}.\footnote{This estimate combines 
the estimates of Corollary 4.1.4 and Proposition 5.1.2 of  \cite{metajm}.} In this general result we let $\Omega_T$ denote any domain of determinacy as in section \ref{bpexact}.   

\begin{prop}\label{37b}
If $T_0$ as in \eqref{a1z}  is small enough,  for $m>\frac{n+5}{2}$ and $0<T_1<T_0$  there exists $C>0$ such that 
for any $0<T\leq T_1$ the solution $u$ of \eqref{37a} satisfies
\begin{align}\label{37bb}
|v|_{L^\infty\cap N^m(\Omega_T)}\leq C\left[\sqrt{T}|f|_{L^\infty\cap N^m(\Omega_T)}+\langle g\rangle_{L^\infty(b\Omega_T)}+\sqrt{T}\langle g \rangle_{N^m(b\Omega_T})\right].
\end{align}
\end{prop}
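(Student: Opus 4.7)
The plan is to decouple the estimate \eqref{37bb} into two independent pieces that are then added: a conormal $L^2$-based bound of $N^m$ type, and a separate $L^\infty$ bound on $v$. The first piece follows the classical Kreiss symmetrizer machinery (available here because of Assumptions \ref{sh}, \ref{nonch}, \ref{ul}) combined with a conormal commutator argument; the second requires a characteristic-wise uniform estimate that propagates the $L^\infty$ trace into the interior without losing a factor of $\sqrt{T}$.

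First I would establish the basic weighted $L^2$ Kreiss estimate on $\Omega_T$: under the uniform Lopatinski condition at $0$, for $T_0$ small and $0<T\le T_1$ solutions of \eqref{37a} satisfy
\begin{equation*}
|v|_{L^2(\Omega_T)}^2+\langle v\rangle_{L^2(b\Omega_T)}^2 \le C\bigl(T|f|_{L^2(\Omega_T)}^2+\langle g\rangle_{L^2(b\Omega_T)}^2\bigr),
\end{equation*}
the $T$ factor arising from Gronwall on the time-integrated energy inequality (equivalently, replacing the usual $e^{-\gamma y_0}$ weight by truncation to the bounded cone $\Omega_T$). Next, to upgrade from $L^2$ to $N^m$, I would apply an arbitrary string of $\le m$ generators from $\mathcal{M}_0$ to the equation. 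Since the generators in Definition \ref{43a} are tangent to $\{y_n=0\}$ and to $\Delta$, commuting them through $L(y,\partial_y)$ and through $B(y')$ produces only conormal commutators of the same or lower order, which are absorbed by a straightforward induction on $|I|\le m$ using the $L^2$ estimate. This yields
\begin{equation*}
|v|_{N^m(\Omega_T)}+\langle v\rangle_{N^m(b\Omega_T)} \le C\bigl(\sqrt{T}\,|f|_{N^m(\Omega_T)}+\langle g\rangle_{N^m(b\Omega_T)}\bigr).
\end{equation*}
Applying Lemma \ref{43e} converts the $\mathcal{M}_0$-conormal bound into the $N^m(\Omega_T)$ bound via the wedge partition $\{\chi_j\}$.

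The $L^\infty$ portion is handled separately. Diagonalizing $L(y,\partial_y)$ locally along the incoming/outgoing eigenvector frame $\{R_k\}$ and integrating the resulting scalar transport equations along the characteristics of $X_k$, one gets a Duhamel representation
\begin{equation*}
|v|_{L^\infty(\Omega_T)}\le C\bigl(\langle g\rangle_{L^\infty(b\Omega_T)}+T|f|_{L^\infty(\Omega_T)}\bigr)+\text{(lower-order terms)},
\end{equation*}
where the off-diagonal coupling terms are controlled by Gronwall. The factor $T$ on $|f|_{L^\infty}$ is stronger than needed, so the bound $\sqrt{T}|f|_{L^\infty\cap N^m}$ in \eqref{37bb} is immediate; crucially, the $L^\infty$ trace contribution carries no $\sqrt{T}$ factor because characteristics traverse $\Omega_T$ without amplification under uniform Lopatinski. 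Combining the two estimates additively gives \eqref{37bb}.

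The main obstacle I expect is the $L^\infty$ step: the naive $N^m$-to-$L^\infty$ Sobolev embedding for conormal spaces would force us to use the $\sqrt{T}$-weighted bound on $g$ for \emph{all} terms, which is weaker than what \eqref{37bb} asserts. Avoiding this loss requires a genuinely $L^\infty$ maximum-principle argument on characteristics, carried out with enough care that the commutators of the transport fields with the conormal generators do not reintroduce an $N^m$-dependence into the boundary $L^\infty$ term. This is exactly the separation between Corollary 4.1.4 of \cite{metajm} (providing the $N^m$ piece) and Proposition 5.1.2 of \cite{metajm} (providing the $L^\infty$ piece via characteristic integration), whose combination yields the stated result.
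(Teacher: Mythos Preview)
The paper does not supply its own proof of this proposition: it is stated as a quotation from \cite{metajm}, with the footnote indicating that \eqref{37bb} combines Corollary 4.1.4 and Proposition 5.1.2 of that reference. You have correctly identified exactly this at the end of your proposal, so in that sense your proposal is aligned with the paper.

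Your sketch of what lies behind those cited results is broadly reasonable, but one step would not go through as written. You propose to commute generators of $\mathcal{M}_0$ through $L$ and absorb the commutators inductively. That would yield an estimate with $|f|_{N^m(\Omega_T,\mathcal{M}_0)}$ on the right-hand side, not $|f|_{N^m(\Omega_T)}$. Since $\mathcal{M}_j\subset\mathcal{M}_0$, the $\mathcal{M}_0$-conormal norm is \emph{stronger} than the $N^m(\Omega_T)$ norm, and the forcing $f$ in the applications here (e.g.\ $r^\eps_a$, or $D(y,u^\eps,u^\eps_a)w^\eps$) is only known to lie in $N^m(\Omega_T)=\sum_j N^m(\Omega_T,\mathcal{M}_j)$, not in $N^m(\Omega_T,\mathcal{M}_0)$. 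The actual argument in \cite{metajm} therefore does not commute $\mathcal{M}_0$ fields globally; it works wedge-by-wedge via the partition $\{\chi_j\}$ of Lemma \ref{43e}, commuting only the $\mathcal{M}_j$-tangent fields within each wedge and using microlocal/paradifferential tools to control the interaction between wedges. Your invocation of Lemma \ref{43e} is in the right place but in the wrong direction: it is used to \emph{decompose} both $v$ and $f$ at the outset, not to convert an $\mathcal{M}_0$-estimate after the fact. Your treatment of the $L^\infty$ piece by integration along characteristics is the correct idea behind Proposition 5.1.2 of \cite{metajm}.
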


We will apply this proposition to the error problem \eqref{error} satisfied by $w^\eps=u^\eps-u^\eps_a$:
\begin{align}\label{error2}
\begin{split}
&L(y,\partial_y)w^\eps=D(y,u^\eps,u^\eps_a)w^\eps-r^\eps_a:=f^\eps\text{ in }y_n>0\\
&B(y')w^\eps|_{y_n=0}=-\eps B(y')U^\eps_1(y',0):=g^\eps\\
&w^\eps=0\text{ in }y_0<0.
\end{split}
\end{align}
Suppose now that the parameters $T_0$ and $\alpha$ as in \eqref{a1z}  are chosen small enough so that Propositions \ref{ex2} and \eqref{37b} apply, and fix some $T_1$ with $0<T_1<T_0$ such that  
%We choose $\Omega_T$ to be an open domain of determinacy as in \eqref{a1z} with compact closure contained in $\Omega^0$.     %For now we choose $0<T<T_0$ such that the exact solution $u^\eps$  
%Recall from section \ref{bpexact} that for some $0<T_1<T_0$ (with $T_0$ small enough)
the exact solution $u^\eps$ 
of \eqref{ia1} satisfies $|u^\eps|_{L^\infty\cap N^m(\Omega_{T_1})}\lesssim 1$.   We also require $\Omega_{T_1}\subset \Omega^0$. %We may assume $T_0$ is small enough so that both Proposition %\footnote{The functions $u^\eps$ and $U_0$ are the only objects in the analysis that are obtained as solutions to nonlinear equations.}   \
%This value of $T$ may need to be decreased in the final step of the error analysis.   The final value of $T$ will be shown to be  independent of $\eps$.  See ...
  We have
\begin{align}\label{37c}
|u^\eps_a|_{L^\infty\cap N^m(\Omega_{T_1})}\lesssim 1\text{ and }|r^\eps_a|_{L^\infty\cap N^m(\Omega_{T_1})}\lesssim \sqrt{\eps}
\end{align}
by Propositions \ref{32a} and \ref{39a} and 
\begin{align}\label{37d}
\langle g^\eps\rangle_{L^\infty\cap N^m(\Omega_{T_1})}\lesssim \eps
\end{align}
by Propositions \ref{32h} and \ref{33cc}.     We use Proposition \ref{35a}(a),(b) to estimate for $0<T\leq T_1$: 
\begin{align}\label{37e}
|D(y,u^\eps,u^\eps_a)w^\eps|_{L^\infty\cap N^m(\Omega_T)}\lesssim |w^\eps|_{L^\infty\cap N^m(\Omega_T)}\Rightarrow |f^\eps|_{L^\infty\cap N^m(\Omega_T)}\lesssim |w^\eps|_{L^\infty\cap N^m(\Omega_T)}+\sqrt{\eps}.
\end{align}
Applying the estimate \eqref{37bb} we obtain:
\begin{align}\label{37f}
|w^\eps|_{L^\infty\cap N^m(\Omega_T)}\leq  C\left[\sqrt{T}(|w^\eps|_{L^\infty\cap N^m(\Omega_T)}+\sqrt{\eps})+\eps+\sqrt{T}\;\eps\right].
\end{align}
Decreasing $T$ if necessary so that $C\sqrt{T}\leq \frac{1}{2}$,  we thus obtain
\begin{align}\label{37g}
|w^\eps|_{L^\infty\cap N^m(\Omega_T)}\leq 2C(\sqrt{T}\sqrt{\eps}+\eps+\sqrt{T}\eps)=%2C[\sqrt{T}\sqrt{\eps}+(1+\sqrt{T})\eps]=
O(\sqrt{\eps}).
\end{align}

\section{Extension to general nonlinear terms $f(y,u)$.}\label{genf} 
In the preceding sections we worked with a function $f(y,u)$ that was quadratic in $u$, mainly in order to be able to see pulse interactions explicitly.   We discuss here how the results of this paper can be extended to the case where $f$ is any element of $C^\infty(\mathbb{R}^{1+n}_+\times \mathbb{R}^N,\mathbb{R}^N)$  such that $f(y,0)=0$.   \\

\textbf{Construction of $U_0$.} With $U_0(y,\theta)=\sum_k\sigma_k(y,\theta_k) r_k(y)$ we write with slight abuse $f(y,U_0)=f(y,\sigma)$.     Recall the following definition:
\begin{defn}\label{41aa}
If $f(y,\sigma)$ is any element of $C^\infty(\mathbb{R}^{1+n}_+\times\mathbb{R}^N,\mathbb{R}^N)$  (or $C^\infty(\mathbb{R}^{1+n}_+\times\mathbb{R}^N,\mathbb{R})$) such that $f(y,0)=0$, 
set $f^m(y,\sigma_m):=f(y,\sigma_me_m)$, where $e_m$ is the $m-$th standard basis vector of $\mathbb{R}^N$.
\end{defn}

 We define $Ef=PSf$, where 
\begin{align}\label{41a}
\begin{split}
Sf=\sum_mf^m(y,\sigma_m)\\
PSf=\sum_m\pi_m f^m(y,\sigma_m).
\end{split}
\end{align}
Writing $f(y,\sigma)=\sum_k f_k(y,\sigma)r_k$ and $f^m(y,\sigma_m)=\sum_k f^m_k(y,\sigma_m)r_k$, we have
\begin{align}\label{41b}
(I-S)f=\sum_k\left[f_k(y,\sigma)-\sum_m f^m_k(y,\sigma_m)\right]r_k.
\end{align}

With the above definition of the operator $E$, the construction of $U_0$  in section \ref{U0} can be repeated with no significant change.  The nonlinear term in the interior profile equation \eqref{c12} for $\sigma_k$ is now $-l_k f^k(y,\sigma_k)$.\\

\textbf{Construction of $V$.} With $\mathcal{F}=L(y,\partial_y)U_0-f(y,U_0)$ we write as before:
\begin{align}\label{41c}
\begin{split}
&(I-E)\mathcal{F}=H_1(y,\theta)+H_2(y,\theta)\text{ where }\\
&H_1(y,\theta)=(I-P)S\mathcal{F}=\sum_k H_{1k}(y,\theta_k)\\
&H_2(y,\theta)=(I-S)\mathcal{F}=-(I-S)f(y,U_0)=\sum_k H_{2k}(y,\theta)r_k.
\end{split}
\end{align}

We have 
\begin{align}\label{41d}
H_{1k}(y,\theta_k)=(1-\pi_k)\left[(L(y,\partial_y)\sigma_k)r_k+\sigma_kL(y,\partial_y)r_k-f^k(y,\sigma_k)\right].
\end{align}
Since $f^k(y,0)=0$, it follows that for any $\theta_k$, $\sigma_k(y,\theta_k)=0\Rightarrow H_{1k}(y,\theta_k)=0$.    Thus, Remark \ref{decV}(b) still applies, and the construction of $V$ (in $\mathcal{U}^\eps_1=V+W^\eps$) goes through unchanged.\\

\textbf{Construction of $W^\eps$. }As before set 
\begin{align}\label{41dd}
\mathcal{H}_2(y,\theta_0,\xi_n):=H_2(\theta_0+\omega_1(y')\xi_n,\dots,\theta_0+\omega_N(y')\xi_n)=\sum_k\mathcal{H}_{2k}(y,\theta_0,\xi_n)r_k(y). 
\end{align}
Using \eqref{41b} and \eqref{41c}, we see that 
\begin{align}\label{41e}
\begin{split}
&(a) -H_{2k}(y,\theta)=f_k(y,\sigma(y,\theta))-\sum_m f^m_k(y,\sigma_m(y,\theta_m)), \text{ so }\\
&(b) -\mathcal{H}_{2k}(y,\theta_0,\xi_n)=f_k(y,\sigma_1(y,\theta_0+\omega_1(y')\xi_n),\dots,\sigma_N(y,\theta_0+\omega_N(y')\xi_n))-\sum_m f^m_k(y,\sigma_m(y,\theta_0+\omega_m(y')\xi_n)).
\end{split}
\end{align}
As before we define real-valued functions $\mathcal{H}_2^m(y,\theta_0,\xi_n)$ by 
\begin{align}\label{41f}
\begin{split}
&(a) \mathcal{H}_2(y,\theta_0,\xi_n)=\sum_m\mathcal{H}^m_2(y,\theta_0,\xi_n)r_m(y',0), \text{ so from }\eqref{41dd}\\
&(b)  \mathcal{H}^m_2(y,\theta_0,\xi_n)=\sum_k\mathcal{H}_{2k}(y,\theta_0,\xi_n)l_m(y',0)r_k(y).%=\sum_k\mathcal{H}_{2k}(y,\theta_0,\xi_n)c_{mk}(y).
\end{split}
\end{align}
We now define $W(y,\theta_0,\xi_n)$ and $t_k(y,\theta_0,\xi_n)$ exactly as in \eqref{w}.     Using \eqref{41e}(b) and \eqref{41f}(b), we see that $\mathcal{H}^k_2(y,\theta_0,\xi_n)$ is a finite sum of terms of the form
\begin{align}\label{41g}
h(y,\sigma_1(y,\theta_0+\omega_1(y')\xi_n),\dots,\sigma_N(y,\theta_0+\omega_N(y')\xi_n))-\sum_m h^m(y,\sigma_m(y,\theta_0+\omega_m(y')\xi_n)),
\end{align}
where $h(y,\sigma)\in C^\infty$ satisfies $h(y,0)=0$.    Similarly, \eqref{41e}(a) implies that $H_{2k}(y,\theta)$ has the same form where each $\sigma_j(y,\theta_0+\omega_j(y')\xi_n)$ is replaced by $\sigma_j(y,\theta_j)$.

The expression \eqref{41g} is our replacement in this general setting for the products in \eqref{d2y} .    Define $b\in C^\infty(\mathbb{R}^{1+n}_+\times \mathbb{R}^N,\mathbb{R})$ by  %\footnote{Here $h^m(y,\sigma_m)$ is defined from $h(y,\sigma)$ like $f^m(y,\sigma_m)$  in \eqref{41a} is defined from $f(y,\sigma)$.}
\begin{align}\label{41gg}
b(y,\sigma)=h(y,\sigma)-\sum_m h^m(y,\sigma_m).
\end{align}
    Then for a given $k$, $t_k(y,\theta_0,\xi_n)$ is a finite sum of terms of the form
\begin{align}\label{41h}
\int^{\xi_n}_{+\infty}b(y,\sigma_1(y,\theta_0+\omega_k(y')\xi_n+s(\omega_1(y')-\omega_k(y'))),\dots,\sigma_N(y,\theta_0+\omega_k(y')\xi_n+s(\omega_N(y')-\omega_k(y'))))ds.
\end{align}
We claim that each $t_k(y,\theta_0,\xi_n)$ continues to satisfy Remark \ref{d10yy}.   To see this, observe first that
\begin{align}\label{42a}
b^k(y,\sigma_k)=0\text{ for all }k.
\end{align}
That is, $b(y,\sigma)=0$ whenever all but one component of $\sigma$ are zero.
For any  $m\neq k$ the $s-$support of $\sigma_m(y,\theta_0+\omega_k(y')\xi_n+s(\omega_m(y')-\omega_k(y')))$ is contained in 
$$K_m:=\{s:|\theta_0+\omega_k(y')\xi_n+s(\omega_m(y')-\omega_k(y'))|\leq 1\},$$ 
so by \eqref{42a} the $s-$support of the integrand in \eqref{41h} is contained in $K:=\cup_{m\neq k}K_m$.  
This is a compact set whose length is bounded independently of $(y,\theta_0,\xi_n)\in\Omega^0\times\mathbb{R}^2_{\theta,\xi_n}$.
This completes the construction of $W^\eps(y,\theta_0,\xi_n)=\chi^\eps(y_0,y_n)W(y,\theta_0,\xi_n)$.\\

\textbf{Error analysis. } 
Using \eqref{42a} again, we can apply Proposition \ref{suppprop} (a),(b) to conclude as before that for every $k$
\begin{align}\label{42d}
H_{2k}(y,\theta)|_{\theta=\frac{\psi}{\eps}} \text{ and }\mathcal{H}^k_2(y,\theta_0,\xi_n)|_{\theta_0=\frac{y_0}{\eps},\xi_n=\frac{y_n}{\eps}}\text{ are supported in the interaction region }I_\eps.
\end{align}
Moreover, the proof of Lemma \ref{28g} shows that there exists $M>0$ such that for each $k$,  if $|y_0+\omega_k(y')|\geq M\eps$, then no $s\in\mathbb{R}$ lies in the $s-$support of $\mathrm{supp}\;\sigma_j\left(y,\frac{y_0+\omega_k(y')y_n}{\eps}+s(\omega_j(y')-\omega_k(y'))\right)$ for two distinct $j$.     With this \eqref{42a} implies that for each $k$ 
\begin{align}\label{42e}
|y_0+\omega_k(y')y_n|\lesssim \eps \text{ on the support of }t_k(y,\theta_0,\xi_n)|_{\theta_0=\frac{y_0}{\eps},\xi_n=\frac{y_n}{\eps}}.
\end{align}
That is, Lemma \ref{28g} continues to hold.   Having the support conditions \eqref{42d} and \eqref{42e},  it is now straightforward to repeat the remaining arguments of section \ref{ea}, letting $b(y,\sigma)$ play the role of the products \eqref{d2y}.

\section{A class of problems locally reducible to the model system \eqref{ia1}.}\label{geometric}

%The following discussion takes place in a small enough open set $U\ni 0$ of $\mathbb{R}^{1+n}$.  We are given that $\mathcal{L}(t,x,\partial_t,\partial_x)$ as in \eqref{ia1} is strictly hyperbolic with respect to $t$.  
Here we explain how to reduce a general class of pulse generation problems to the the model problem \eqref{ia1}.

%Consider the geometric situation described in Remark \ref{geo}(2). 
On a small enough neighborhood $U\ni 0$ in $\mathbb{R}^{1+n}$ 
consider the pulse generation problem \eqref{pcp} on $\mathcal{D}\cap U$ described in Remark \ref{geo}:
\begin{align}\label{pcp2}
\begin{split}
&\mathcal{P}(z,\partial_z)u^\eps=f(z,u^\eps)\text{ in }\beta>0\\
&B(z)u^\eps|_{\beta=0}=g\left(z,\frac{\psi_0(z)}{\eps}\right)\text{ on }\beta=0\\
&u^\eps=0 \text{ in }\alpha<0.
\end{split}
\end{align}
%\begin{align}\label{pcp2}
%\begin{split}
%&\mathcal{L}(t,x,\partial_t,\partial_x)u^\eps=f(t,x,u^\eps)\text{ in }\beta>0\\
%&B(t,x)u^\eps|_{\beta=0}=g\left(t,x,\frac{\psi_0(t,x)}{\eps}\right)\text{ on }\beta=0\\
%&u=0 \text{ in }\alpha<0.
%\end{split}
%\end{align}
Let $S=\{\alpha=0\}$ and $b\mathcal{D}=\{\beta=0\}$ be the given smooth spacelike surface and noncharacteristic boundary described there.    The transversal intersection assumption allows us to suppose  $d\alpha$ and $d\beta$ are linearly independent on $U$.   Choosing new coordinates $y=(y_0,y'',y_n)$  so that $y_0=\alpha$, $y_n=\beta$, we arrange so that 
\begin{align}\label{45a}
S=\{y_0=0\},\; b\mathcal{D}=\{y_n=0\},\; \Delta=\{y_0=y_n=0\}. 
\end{align}
Since $y_n=0$ is noncharacteristic, the problem \eqref{pcp2} can be written in the new coordinates as 
%Assumptions (a)-(d) in Remark \ref{geo}(2) allow us to write each of the $N$ characteristic surfaces containing $\Delta$ in the form
%\begin{align}\label{45b}
%\Sigma_k=\{y:y_0=\phi_k(y'',y_n)\},
%\end{align}
\begin{align}\label{45aa}
\begin{split}
&L(y,\partial_{y})u^\eps=\partial_nu^\eps+\sum^{n-1}_{j=0}B_j(y)\partial_ju^\eps=f(y,u^\eps)\text{ in }y_n>0,\\ 
&B(y')u^\eps|_{y_n=0}=g(y',\theta_0)|_{\theta_0=\frac{\psi_0(y')}{\eps}}\\
&u^\eps=0\text{ in }y_0<0.
\end{split}
\end{align}
for slightly modified $f$, $g$, and $\psi_0$.

The boundary phase $\psi_0$ is assumed to be a smooth defining function for $\Delta\subset \{y_n=0\}$, so in the new coordinates it has the form
\begin{align}\label{45d}
\psi_0(y')=y_0q(y'),
\end{align}
for some smooth function $q\neq 0$.  Replacing $\psi_0$ by $-\psi_0$ if necessary, we arrange so that $q>0$.   Now consider the eikonal problem
\begin{align}\label{45e}
\begin{split}
&\partial_n\psi_k=-\lambda_k(y,d'\psi_k)\\
&\psi_k|_{y_n=0}=y_0q(y'),
\end{split}
\end{align}
 for $\lambda_k(y,\eta')$, $k=1,\dots,N$ as  defined as in section \ref{eikonal}.\footnote{In defining the $\lambda_k(y,\eta')$ we work with the matrix symbol $L(y,\eta)=\eta_n I+\mathcal{A}(y,\eta')$ of $L(y,\partial_y)$ as in \eqref{45aa}.    Since $y_n=0$ is noncharacteristic, Remark \ref{hyp} applies to yield $N$ distinct real nonzero eigenvalues $\lambda_k(y,\eta')$ of $\mathcal{A}(y,\eta')$, which are  defined for $\eta'$ such that $|\eta''|\leq \delta|\eta_0|$ when $\delta>0$ is small enough.}

%We can use the implicit function to write
%\begin{align}\label{45f}
%\psi_k(y)=(y_0-\phi_k(y'',y_n))\beta_k(y),
%\end{align}
%for some smooth $\phi_k$ and $\beta_k$.   Arguing as in section \ref{eikonal}, we see that $\phi_k$ is the solution to \eqref{45c} and that $\beta_k(y',0)=\alpha(y')$.  
If we  change coordinates again, modifying only the time-variable (or $y_0$-coordinate) by taking it to be $y_0q(y')$, then the  pulse continuation problem \eqref{pcp2} takes  the form \eqref{ia1}. In particular, the phase in the boundary datum is now $\frac{y_0}{\eps}$.
Moreover, coordinate invariance of the uniform Lopatinski condition implies that this condition is satisfied by \eqref{45aa} at $0$. 
Thus, we have verified that the transformed problem \eqref{45aa} satisfies all the  hypotheses of Theorem \ref{mr}.

As in \eqref{a5} for each $k=1,\dots,N$ we can write
\begin{align}\label{45f}
\psi_k(y)=(y_0-\phi_k(y'',y_n))\beta_k(y)
\end{align}
for some smooth $\beta_k\neq 0$, where $\phi_k$ satisfies the eikonal problem \eqref{a7}.    The argument of section \ref{O0}, in particular the fact that the numbers $\gamma_k$ in \eqref{phases} are distinct, real, and \emph{nonzero}, shows that near $0$ the surfaces $\Sigma_j=\{\psi_j=0\}=\{y_0-\phi_j=0\}$ satisfy
\begin{align}\label{45g}
\Sigma_j\cap \Sigma_k=\Delta \text{ for }j\neq k, \;\Sigma_j\cap\{y_n=0\}=\Delta, \; \Sigma_j\cap \{y_0=0\}=\Delta,
\end{align}
where all intersections are transversal.

\section{Application to pulse reflection}\label{application}
Consider the pulse reflection problem \eqref{pcp3}, where a single outgoing pulse $u^\eps_0$ concentrated on a characteristic surface $\Sigma=\{\zeta=0\}\ni 0$ reflects transversally off a boundary $b\mathcal{D}=\{\beta=0\}\ni 0$ \emph{starting} at ``time" $\alpha=0$, where $\alpha(0)=0$.  In order to define the characteristic phases $\psi_k$ and surfaces $\Sigma_k$  that appear in Theorem \ref{reflection}, we begin by showing  that near the first time of intersection,  the codimension two manifold $\tilde\Delta:=\Sigma\cap b\mathcal{D}$ is contained in a new spacelike surface $\tilde S\ni 0$ transverse to $b\mathcal{D}$ such that\footnote{This was stated in  \cite{metajm}; here we provide some detail.   In general, the surface $\tilde S$ is not equal to $S=\{\alpha=0\}$.} 
 
\begin{align}\label{46a} 
\tilde\Delta:=\Sigma\cap b\mathcal{D}=\tilde S\cap b\mathcal{D}.
\end{align}
Moreover, there exist $N$ characteristic phases $\psi_k$ and surfaces $\Sigma_k=\{\psi_k=0\}$ such that
\begin{align}\label{46b}
\Sigma_j\cap \Sigma_k=\tilde \Delta \text{ for }j\neq k, \;\Sigma_j\cap b\mathcal{D}=\tilde \Delta, \; \Sigma_j\cap \tilde S=\tilde\Delta,
\end{align}
where all intersections are transversal and $\Sigma$ equals one of the $\Sigma_k$.   Recall that in formulating \eqref{pcp3} we assumed that $S=\{\alpha=0\}$ was spacelike at $0$, but we did \emph{not} assume
that $S\cap b\mathcal{D}=\Sigma\cap b\mathcal{D}$.     After this geometric preparation  we prove the pulse reflection theorem, Theorem \ref{reflection}.

   In new coordinates such that $y_0=\alpha$ and $y_n=\beta$ the problem \eqref{pcp3} takes the form 
\begin{align}\label{46}
\begin{split}
&L(y,\partial_{y})u^\eps=\partial_nu^\eps+\sum^{n-1}_{j=0}B_j(y)\partial_ju^\eps=f(y,u^\eps)\text{ in }y_n>0,\\ 
&B(y')u^\eps|_{y_n=0}=0\\%g(y',\theta_0)|_{\theta_0=\frac{\psi_0(y')}{\eps}}:=G^\eps(y')\\
&u^\eps=u^\eps_0 \text{ in }y_0<-\gamma.
\end{split}
\end{align}
By assumption $L(y,\partial_y)$ is strictly hyperbolic with respect to $y_0$ and $B_n$ is invertible.  
%Suppose that  the operator $L(y,\partial_{y})=\partial_n+\sum^{n-1}_{j=0}B_j(y)\partial_j$ is strictly hyperbolic with respect to $y_0$ and that $y_n=0$ is noncharacteristic.   Consider an outgoing characteristic surface $\Sigma=\{y:\psi=0\}$ that reflects transversally off the boundary $y_n=0$ \emph{starting} at $y_0=0$; so $\Sigma\cap \{y_n=0\}=\emptyset$ in $y_0<0$.  We claim that near the first time of intersection, $\Delta:=\Sigma\cap\{y_n=0\}$ is contained in a spacelike manifold.
We are given that $y=0$ is a point in the ``first-reflection set" $\Sigma\cap \{y_n=0\}\cap\{y_0=0\}$.
Since the intersection $\tilde\Delta:=\Sigma\cap\{y_n=0\}$ is transversal, some component of $\partial_{y'}\zeta(0)$ is nonzero.  If  $\partial_j\zeta(0)\neq 0$ for some $j\in\{1,\dots,n-1\}$, we can write $\zeta(y)=(y_j-\phi(\tilde y))h(y)$ near $0$ for some smooth $\phi$ and $h$ such $h(0)\neq 0$,   where $\tilde y$ includes all the components of $y$ except $y_j$.  But then $$\tilde \Delta=\{y:y_n=0, y_j=\phi(\tilde y)\}$$ near $0$.     This contradicts the fact that the intersection is empty in $y_0<0$, because $y_0$ is a component of $\tilde y$.
Thus, we must have $\partial_{y_0}\zeta(0)\neq 0$, which implies $\zeta(y)=(y_0-\phi(y'',y_n))h(y)$ so 
$$\tilde\Delta=\{y:y_n=0, y_0=\phi(y'',0)\}.$$
Moreover, $\partial_{y''}\phi(0,0)=0$, since $\phi(y'',0)$ has a \emph{minimum} at $y''=0$, and we can suppose $h(0)>0$ after replacing $\zeta$ by $-\zeta$ if necessary.

Now let $\tilde S=\{y:t(y):=\zeta(y',0)=0\}$.  
%$\{y:\tilde\alpha(y):=y_0-\phi(y'',0)=0\}$.   We have $\partial_y\tilde\alpha(0)=(1,0,\dots,0)$, so $\tilde S$ is spacelike at $0$, clearly transverse to $\{y_n=0\}$, and \eqref{46a} holds. 
%The intersection $\Sigma\cap b\mathcal{D}$ is transversal so $h(0)\neq 0$, and we can suppose $h(0)>0$ after replacing $\zeta$ by $-\zeta$ if necessary.  
We have $\partial_yt(0)=(1,0,\dots,0)h(0)$, so $\tilde S$ is spacelike at $0$ and we can take $t(y)$ as a new time (i.e., $y_0$) coordinate.   In the new coordinates $u^\eps$ again satisfies a problem of the form \eqref{46} near $0$, but with $\gamma$ replaced, for example, by $2\gamma h(0)$.\footnote{We have $t(y_0,0)=y_0h(0)<-\gamma h(0)\Leftrightarrow y_0<-\gamma$.}  Also $\zeta(y',0)=y_0$.  Defining $\mathcal{A}(y,\eta')$ as in \eqref{a2}, we see by Remark \ref{hyp} that $\mathcal{A}(y,\eta')$ has $N$ distinct real nonzero eigenvalues $\lambda_j(y,\eta')$ defined for $|\eta''|\leq \delta |\eta_0|$ for $\delta$ small enough.  Since $\zeta$ is a characteristic phase, for some index, say $N$,  it satisfies
\begin{align}\label{46c}
\begin{split}
&\partial_n\zeta=-\lambda_N(y,d'\zeta)\\
&\zeta|_{y_n=0}=y_0
\end{split}
\end{align}
near $0$.    If we define $\psi_k$ to be the solution of 
\begin{align}\label{46d}
\begin{split}
&\partial_n\psi_k=-\lambda_k(y,d'\psi_k)\\
&\psi_k|_{y_n=0}=y_0,
\end{split}
\end{align}
we see that $\zeta=\psi_N$, and the surfaces $\Sigma_k:=\{\psi_k=0\}$ satisfy \eqref{46b} by the argument given at the end of section \ref{geometric}. 

The assumption that \eqref{pcp3} satisfies the uniform Lopatinski condition at $z=0$, when $\alpha$ is taken as the time coordinate, implies that $(L(y,\partial_y),B(y'))$, when written in the final $y-$coordinates constructed above, satisfies the uniform Lopatinski condition at $y=0$ when $y_0$ is the time coordinate.  In particular, the number of incoming phases $\psi_k$ is the same as the number of positive eigenvalues of $B_0(y)$, namely $p$.    After relabeling if necessary, we  take $\psi_1,\dots,\psi_p$ as the incoming phases.

%If
%the boundary operator $B(y)$ is such that $(L(y,\partial_y),B(y))$ satisfies the uniform Lopatinski condition at $0$ in the original $y-$coordinates, then that condition continues to hold at $0$ when $\alpha$ is taken as the new $y_0$ variable.     We have now verified all the geometric conditions described in Remark \ref{geo}(2).  

\begin{proof}[Proof of Theorem \ref{reflection}]
%An outgoing pulse $u^\eps$ concentrated on $\Sigma$ can be constructed such that 
%$$u^\eps(y)=\sigma\left(y,\frac{\psi}{\eps}\right)r(y)+o(1) \text{ in }y_0<0,$$
%for an appropriate scalar profile $\sigma(y,\theta)$ and eigenvector $r(y)$.  Here $o(1)$ refers to the norm in $L^\infty\cap N^m(U,\mathcal{M})$, where $U\ni 0$ is some open set and $\mathcal{M}$ is the set of vector fields with $C^\infty_b(U)$ coefficients that are tangent to $\Sigma$.
\textbf{1. }We consider the problem \eqref{46} in the final coordinates where \eqref{46c} and \eqref{46d} hold,  $\zeta=\psi_N$, the phases $\psi_1,\dots,\psi_p$ are incoming, and the remaining $\psi_j$ are outgoing.\footnote{Here we have renamed $-2\gamma h(0)$ as $-\gamma$.}   We will write out the proof for the case of a quadratic nonlinearity $f(y,u)$ as in \eqref{c1}.   The extension to general nonlinear functions $f(y,u)$ is then obtained as in section \ref{genf}.

Parallel to the proof of Theorem \ref{mr}, we look for an approximate solution to \eqref{46} of the form
\begin{align}\label{r1}
\begin{split}
&u^\eps_a(y)=\left[U_0(y,\theta)+\eps\mathcal{U}^\eps_1(y,\theta,\theta_0,\xi_n)\right]|_{\theta=\frac{\psi}{\eps},\theta_0=\frac{y_0}{\eps},\xi_n=\frac{y_n}{\eps}}=U_0\left(y,\frac{\psi}{\eps}\right)+\eps U^\eps_1(y), \text{ where }\\
&\quad U_0(y,\theta)=\sum^N_{k=1}\sigma_k(y,\theta_k)r_k(y)
\end{split}
\end{align}
for profiles $\sigma_k$ to be determined and $\mathcal{U}^\eps_1$ as in \eqref{r6a}.
Now we set $\sigma_N(y,\theta_N)=s_N(y,\theta_N)+\tau(y,\theta_N)$, where $s_N$ is unknown and $\tau$ is the given leading profile of the outgoing pulse $u^\eps_o$.   We have the initial conditions
\begin{align}\label{r2}
\sigma_{1},\dots,\sigma_{N-1}=0 \text{ in }y_0<-\gamma,\;\;s_N=0\text{ in }y_0<-\gamma, \text{ and }\tau|_{y_n=0}=0\text{ in }y_0<-\gamma.
\end{align}

\textbf{2. }The computations of section \ref{U0} yield the interior transport equations \eqref{c10}, where the coefficients $c_k$, $d_k$, $e_k$ are just as before.  As before we conclude that the outgoing profiles $\sigma_{p+1},\dots,\sigma_{N-1}$ are zero.  The transport equation for $\sigma_N$ can be written
\begin{align}\label{r3}
X_N(y,\partial_y)s_N+c_Ns_N-\left[d_Ns_N+e_N(s_N^2+2\tau s_N)\right]=-\left[X_N(y,\partial_y)\tau+c_N\tau-(d_N\tau+e_N\tau^2)\right].
\end{align}
The equation 
\begin{align}\label{r4}
X_N(y,\partial_y)\tau+c_N\tau-(d_N\tau+e_N\tau^2)=0
\end{align}
is exactly the interior equation that the (nonzero) profile $\tau$ is constructed to satisfy so that \eqref{out} holds, %so that $u^\eps_0$ as in \eqref{out} satisfies $L(y,\partial_y)u^\eps_o=f(y,u^\eps_o)$ near $0$, 
so we conclude from \eqref{r2} and \eqref{r3}  that $s_N$ is zero.   Henceforth we will sometimes write $\sigma_N$ in place of $\tau$. 
The boundary condition $B(y',0)U_0(y',0,\theta_0,\dots,\theta_0)=0$ yields in place of \eqref{c11}:
\begin{align}\label{r5} 
\begin{split}
&\begin{pmatrix}Br_1&\dots&Br_p\end{pmatrix}\begin{pmatrix}\sigma_1\\\vdots\\\sigma_p\end{pmatrix}=-\tau Br_N.
\end{split}
\end{align}
As before this determines the boundary values of the incoming profiles.   With the transport equations \eqref{c10} and initial conditions \eqref{r2}, the $\sigma_j$, $j\leq p$ are now uniquely determined and we have
\begin{align}
U_0(y,\theta)=\sum^{p}_{k=1}\sigma_k(y,\theta_k)r_k(y)+\tau(y,\theta_N) r_N(y).
\end{align}
Observe that 
\begin{align}\label{r6}
\mathrm{supp}\;\sigma_j\subset \{y_0\geq -\gamma\} \text{ for }j=1,\dots,p\text{ and }\mathrm{supp}\;\sigma_N|_{y_n=0}\subset \{y_0\geq -\gamma\}.
\end{align}

\textbf{3. }The profile of the corrector 
\begin{align}\label{r6a}
\mathcal{U}^\eps_1(y,\theta,\theta_0,\xi_n)=V(y,\theta)+W^\eps(y,\theta_0,\xi_n)
\end{align}
is constructed exactly as in sections \ref{V} and \ref{multicorrector}.   We now have
\begin{align}
\begin{split}
&V(y,\theta)=\sum^p_{k=1}V_k(y,\theta_k)+V_N(y,\theta_N)\\
&W^\eps(y,\theta_0,\xi_n)=\chi^\eps(y_0,y_n)W(y,\theta_0,\xi_n),
\end{split}
\end{align}
where $V_k$ (resp. $W$) is given by the formula \eqref{Vk} (resp. \eqref{w} and \eqref{d10z}).  Inspection of these formulas shows 
\begin{align}\label{r7}
\begin{split}
&\mathrm{supp}\;V_k\subset \{y_0\geq -\gamma\} \text{ for }k=1,\dots,p\text{ and }\mathrm{supp}\;V_N|_{y_n=0}\subset \{y_0\geq -\gamma\}\\
&\mathrm{supp}\;W\subset \{y_0\geq -\gamma\},
\end{split}
\end{align}
where the support condition on $W$ reflects the fact that products appearing in the integrand of terms like \eqref{d10z} always involve at least one incoming profile.

 \textbf{4. }This yields an approximate solution $u^\eps_a$ as in \eqref{r1} defined on a set $\Omega^0\ni 0$ chosen to satisfy $\Omega^0\subset \mathcal{O}$ and all the conditions in section \ref{O0}.   With $r^\eps_a$ given by \eqref{d15} we have 
 \begin{align}\label{r8}
 \begin{split}
 &L(y,\partial_y)u^\eps_a=f(y,u^\eps_a)+r^\eps_a\\
 &B(y')u^\eps_a|_{y_n=0}=\eps B(y')U^1_\eps(y',0):=b^\eps(y')\\
 &u^\eps_a=\sigma_N\left(y,\frac{\psi_N}{\eps}\right)r_N(y)+\eps V_N\left(y,\frac{\psi_N}{\eps}\right)\text{ in }y_0<-\gamma.
 \end{split}
 \end{align}
The estimates of Propositions \ref{32a} and \ref{39a} give for any $r\in \mathbb{N}_0$:
\begin{align}\label{r9}
|u^\eps_a|_{L^\infty\cap N^r(\Omega^0)}\lesssim 1, \;\langle b^\eps\rangle_{L^\infty\cap N^r(b\Omega^0)}\lesssim \eps, \;|r^\eps_a|_{L^\infty\cap N^r(\Omega_T)}\lesssim \sqrt{\eps},
\end{align}
where $\Omega_T\subset \Omega^0$ is as in Proposition \ref{35a}.

\textbf{5.  Exact solution. }By \eqref{out} we have $u^\eps_0=\sigma_N\left(y,\frac{\psi_N}{\eps}\right)r_N(y)+w^\eps_0$, so\footnote{For the estimate \eqref{r10} we used $|w^\eps_o|_{L^\infty\cap N^m(\Omega_0\cap\{y_0<-\gamma\})}\leq |w^\eps_o|_{L^\infty\cap N^m(\Omega_0\cap\{y_0<-\gamma\},\mathcal{M}_{\Sigma_N})}$, where  $m>\frac{n+5}{2}$ is as in Theorem \ref{reflection}.} 
\begin{align}\label{r10}
|u^\eps_o|_{L^\infty\cap N^m(\Omega_0\cap\{y_0<-\gamma\})}\leq 1.
\end{align}
Thus, as in Proposition \ref{ex2} we can use Theorem 2.1.1 of \cite{metajm} to conclude that \eqref{46} has an exact solution $u^\eps$ such that for some $\Omega_{T_1}\subset \Omega^0$ with $0<T_1<T_0$:
\begin{align}
|u^\eps |_{L^\infty\cap N^m(\Omega_{T_1})}\lesssim 1.
\end{align}

\textbf{6.  Error problem. }In $y_0<-\gamma$ we compute
\begin{align}\label{r11}
\begin{split}
&u^\eps-u^\eps_a=u^\eps_0-u^\eps_a=\sigma_N\left(y,\frac{\psi_N}{\eps}\right)r_N(y)+w^\eps_0    -\left[\sigma_N\left(y,\frac{\psi_N}{\eps}\right)r_N(y)+\eps V_N\left(y,\frac{\psi_N}{\eps}\right)\right]=\\
&\qquad w^\eps_0-\eps V_N\left(y,\frac{\psi_N}{\eps}\right):=w^\eps_1, \text{ where }|w^\eps_1|_{L^\infty\cap N^m(\Omega_0\cap\{y_0<-\gamma\})}=o(1).
\end{split}
\end{align}
Let  $w^\eps:=u^\eps-u^\eps_a$.  Parallel to \eqref{error2} we have
\begin{align}\label{r12}
\begin{split}
&L(y,\partial_y)w^\eps=D(y,u^\eps,u^\eps_a)w^\eps-r^\eps_a\\
&B(y',0)w^\eps|_{y_n=0}=-b^\eps(y')\\
&w^\eps=w^\eps_1\text{ in }y_0<-\gamma.
\end{split}
\end{align}
The estimates of $r^\eps_a$, $b^\eps$, and $w^\eps_1$ in \eqref{r9} and \eqref{r11} imply, by an argument similar to that of section \ref{weps}, that 
\begin{align}
|w^\eps|_{L^\infty\cap N^m(\Omega_T)}=o(1)\text{ as }\eps\to 0.
\end{align}
for some $0<T\leq T_1$.  If as in the construction of \cite{ar2} we have $|w^\eps_o|_{L^\infty\cap N^m(\Omega_0\cap\{y_0<-\gamma\},\mathcal{M}_{\Sigma_N})}\lesssim \eps$, then we obtain the rate of convergence 
\begin{align}
|w^\eps|_{L^\infty\cap N^m(\Omega_T)}\lesssim \sqrt{\eps}\text{ for }\eps\in (0,\eps_0]
\end{align}
for some $\eps_0>0$.

\end{proof}

\bibliographystyle{alpha}
%\bibliography{quasi}
%\bibliographystyle{plain}
\bibliography{bib2}

%\begin{thebibliography}{99999}

%\newpage
%\bibliographystyle{abbrv}
%\bibliography{wreal}

%[W1] Williams, M.,
% \emph{Boundary layers and glancing blow-up in nonlinear geometric optics},
 %Ann. Scient. de l'Ecole Norm. Sup. 33. 2000, pp. 383-432.

%\end{thebibliography}

\end{document}